\numberwithin{equation}{section}
\newtheorem{thm}{Theorem}[section]
\newtheorem{cor}{Corollary}[section]
\newtheorem{rem}{Remark}[section]
\newcommand{\Z}{\mathbb{Z}}
\newcommand{\ds}{\operatorname{ds}}
\def\ds{\displaystyle}
\begin{document}
\title[Product Identities for
Theta Functions]{Integer Matrix Exact Covering Systems and Product
Identities for Theta Functions}

\author{Zhu ~Cao}
\address{Department of Mathematics, University of Mississippi, University, MS 38677,  USA}
\email{zcao3@olemiss.edu}

\begin{abstract}
In this paper, we prove that there is a natural correspondence
between product identities for theta functions and integer matrix
exact covering systems. We show that since $\Z^n$ can be taken as
the disjoint union of a lattice generated by $n$ linearly
independent vectors in $\Z^n$ and a finite number of its translates,
certain products of theta functions can be written as linear
combinations of other products of theta functions. We firstly give a
general theorem to write a product of $n$ theta functions as a
linear combination of other products of theta functions. Many known
identities for products of theta functions are shown to be special
cases of our main theorem. Several entries in Ramanujan's notebooks
as well as new identities are proved as applications, including
theorems for products of three and four theta functions that have
not been obtained by other methods.

\end{abstract}

\maketitle

\vskip2pt

\bigskip
\section{Introduction and Main Theorem}
We use the standard notation for $q$-products, defining
\begin{align} \label{qprod}
(a)_\infty := (a;q)_{\infty} = \prod_{k=0}^{\infty} (1 - aq^{k}),
\quad |q| < 1.
\end{align}
The celebrated Jacobi triple product identity is given by \cite[p.
$10$]{spirit} {\allowdisplaybreaks
\begin{align} \label{JTI}
\sum_{n= -\infty}^{\infty} q^{n^{2}} z^{n} = (-qz;q^{2})_{\infty}
(-q/z;q^{2})_{\infty} (q^{2}; q^{2})_{\infty}, \qquad |q| < 1.
\end{align}
We define the modified Jacobi theta function $\langle
x;q\rangle_\infty:=(q;q)_\infty(x;q)_\infty(q/x;q)_\infty$. So the
Jacobi triple product identity can be written as
\begin{align*}
\sum_{n= -\infty}^{\infty} q^{n^{2}} z^{n} =\langle
-qz;q^2\rangle_\infty , \qquad |q| < 1.
\end{align*}
We use abbreviated forms for $q$-products
\begin{align*}
(\alpha,\beta,\dots,\gamma; q)_\infty
&:=(\alpha;q)_\infty(\beta;q)_\infty \dots(\gamma;q)_\infty,\\
\langle\alpha,\beta,\dots,\gamma; q\rangle_\infty
&:=\langle\alpha;q\rangle_\infty\langle\beta;q\rangle_\infty
\dots\langle\gamma;q\rangle_\infty.
\end{align*}
Ramanujan's general theta function is defined as
\begin{align}
f(a,b):=\sum^\infty_{n=-\infty}a^{n(n+1)/2}b^{n(n-1)/2}, \ |ab|<1.
\end{align}
By Jacobi's triple product identity, we have
\[f(a,b)=(-a; ab)_\infty(-b; ab)_\infty(ab; ab)_\infty.\]
It is easy to verify that
\begin{align}
f(a, b)&=f(b, a),\\
f(1, a)&=2f(a, a^3),\\
\label{f-1a}
f(-1, a)&=0,
\end{align}
and, if $n$ is an integer,
\begin{align} \label{aa}
f(a, b)=a^{n(n+1)/2}b^{n(n-1)/2}f(a(ab)^n, b(ab)^{-n}).
\end{align}
The three most important cases of $f(a, b)$, namely
 {\allowdisplaybreaks
\begin{align*} \varphi(q) :&=\sum_{n=-\infty}^{\infty} q^{n^2} = f(q,
q) =(-q;q^{2})^2_{\infty}(q^{2}; q^{2})_{\infty},\\
\psi(q) :&=\sum_{n=0}^{\infty} q^{n(n +1)/2} = f(q, q^3) =
\frac{(q^{2}; q^{2})_{\infty}}{(q;q^{2})_{\infty}},
\\
f(-q):&=f(-q, -q^2)=(q; q)_\infty,
\end{align*}
are also used throughout this paper. We also define $\chi(q)=(-q;
q^2)_\infty$, which is not a theta function. We define it mainly for
convenience.

Define $(a)_n: =a(a+1)(a+2)\cdots(a+n-1)$. A modular equation of
degree $n$ is an algebraic relation of $\alpha$ and $\beta$ which is
induced by the relation
\begin{align} \label{defmodular}
\frac{_2F_1(\frac{1}{2}, \frac{1}{2}; 1;
1-\beta)}{_2F_1(\frac{1}{2}, \frac{1}{2}; 1;
\beta)}=n\frac{_2F_1(\frac{1}{2}, \frac{1}{2}; 1;
1-\alpha)}{_2F_1(\frac{1}{2}, \frac{1}{2}; 1; \alpha)},
\end{align}
 where
\[_2F_1(a, b; c; z)
:=\sum_{n=0}^\infty\frac{(a)_n(b)_n}{(c)_n n!}{z^n}, \qquad |z| <
1,\] is the Gaussian or ordinary hypergeometric functions. When
\eqref{defmodular} holds, we say that $\beta$ has degree $n$ over
$\alpha$.

A system of congruences $a_i\pmod {n_i}$ with $1\leq i \leq k$ is
called a covering system (or complete residue system) if every
integer $y$ satisfies $y \equiv a_i \pmod {n_i}$ for at least one
value of $i$.

A covering system in which each integer is covered by just one
congruence is called an exact covering system (ECS). In other words,
an exact covering system is a partition of the integers into a
finite set of arithmetic sequences.

Corresponding to the exact covering system $\big\{r\,
(\text{mod}\,\, {k})\big\}_{r=0}^{k-1}$, we can write any theta
function as the linear combination of $k$ theta functions
\begin{align}
f(a,
b)=&\sum^\infty_{n=-\infty}a^{n(n+1)/2}b^{n(n-1)/2}=\sum^{k-1}_{r=0}\sum^\infty_{n=-\infty}a^{(kn+r)(kn+r+1)/2}b^{(kn+r)(kn+r-1)/2}
\notag \\  \label{entry31}
=&\sum^{k-1}_{r=0}a^{r(r+1)/2}b^{r(r-1)/2}f(a^{k(k+1)/2+kr}b^{k(k-1)/2+kr},a^{k(k-1)/2-kr}b^{k(k+1)/2-kr}).
\end{align}
If we define $U_k=a^{k(k+1)/2}b^{k(k-1)/2}$,
$V_k=a^{k(k-1)/2}b^{k(k+1)/2}$, then we can write \eqref{entry31} as
\begin{align} \label{entry 31}
f(a,b)=f(U_1, V_1)=\sum^{k-1}_{r=0}U_rf(\frac{U_{k+r}}{U_r},
\frac{V_{k-r}}{U_r}),
\end{align}
This is \cite[pp.~48--49, Entry 31]{Ber1}. Letting $k=2$ in
\eqref{entry 31}, we have
\begin{align} \label{entry 311}
f(a,b)=f(a^3b, ab^3)+af(b/a, a^5b^3),
\end{align}
which is used in this paper.

A natural question is: For a product of $n$ $(n\geq 2)$ theta
functions, do we have similar results as \eqref{entry 31}? In other
words, can we write a product of $n$ theta functions as a linear
combination of other products of theta functions? The quintuple
product identity and the septuple product identity are examples for
products of two theta functions.
\begin{thm} [The Quintuple Product Identity]
For $a \neq 0$,
\begin{align} \label{qpi}
&(-aq;q)_\infty(-1/a;q)_\infty(a^2q;q^2)_\infty(q/a^2;q^2)_\infty(q;q)_\infty\notag
\\
=&(a^3q^2;q^3)_\infty(q/a^3;q^3)_\infty(q^3;q^3)_\infty+a^{-1}(a^3q;q^3)_\infty(q^2/a^3;q^3)_\infty(q^3;q^3)_\infty.
\end{align}
\end{thm}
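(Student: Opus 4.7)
The plan is to recognize the left-hand side of \eqref{qpi} as a product of two Ramanujan theta functions divided by $(q^2;q^2)_\infty$, and then apply the ECS philosophy to $\Z^2$. By Jacobi's triple product identity we have
\[ (-aq;q)_\infty(-1/a;q)_\infty(q;q)_\infty = f(aq,1/a), \qquad (a^2q;q^2)_\infty(q/a^2;q^2)_\infty(q^2;q^2)_\infty = f(-a^2q,-q/a^2), \]
and the two summands on the right of \eqref{qpi} are $f(-a^3q^2,-q/a^3)$ and $a^{-1}f(-a^3q,-q^2/a^3)$. The identity is therefore equivalent to
\[ f(aq,1/a)\,f(-a^2q,-q/a^2) = (q^2;q^2)_\infty\bigl[f(-a^3q^2,-q/a^3) + a^{-1}f(-a^3q,-q^2/a^3)\bigr]. \]

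Expanding both theta functions as series and multiplying gives the double sum
\[ \sum_{(m,n)\in\Z^2} (-1)^n a^{m+2n}\, q^{m(m+1)/2+n^2}, \]
which I would reorganize according to the ECS on $\Z^2$ obtained by splitting on the residue $\epsilon := (m+2n)\bmod 3 \in \{-1,0,1\}$; this is the ECS associated to the index-$3$ sublattice of $\Z^2$ generated by $(1,1)$ and $(2,-1)$, together with two translates. Within the coset $m+2n=3k+\epsilon$, the unimodular substitution $n=k-s$, $m=k+2s+\epsilon$ decouples the quadratic exponent into
\[ \frac{m(m+1)}{2}+n^2 = \frac{3k^2+(1+2\epsilon)k}{2} + 3s^2 + (1+2\epsilon)s + \frac{\epsilon(\epsilon+1)}{2}, \]
so each coset's contribution factors as a product of a sum over $k$ (producing a theta function in $a$ and $q$) and a sum over $s$ (producing a theta series in $q$ alone).

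For $\epsilon=0$ and $\epsilon=-1$, the $k$-sum is exactly $f(-a^3q^2,-q/a^3)$ and $a^{-1}f(-a^3q,-q^2/a^3)$ respectively, while in both cases the $s$-sum equals $\sum_s(-1)^sq^{3s^2\pm s} = (q^2;q^2)_\infty$ by Euler's pentagonal number theorem applied with $q$ replaced by $q^2$. These reproduce the two terms on the right of \eqref{qpi}. The subtle step is the coset $\epsilon=1$, whose $s$-sum reads $\sum_s(-1)^sq^{3s(s+1)}$: I would show it vanishes by the involution $s\mapsto -s-1$, which preserves $s(s+1)$ but flips the sign $(-1)^s$, pairing all terms into cancelling opposites. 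Recognizing this hidden symmetry is the only non-mechanical part of the argument; once it is in hand, dividing by the common factor $(q^2;q^2)_\infty$ yields \eqref{qpi}.
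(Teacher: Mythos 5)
Your proof is correct and is essentially the paper's own approach: the index-$3$ sublattice you use is exactly the ``generalized orthogonal'' matrix $\left(\begin{smallmatrix}1&2\\-1&1\end{smallmatrix}\right)$ with $l_1=1$, $l_2=2$ underlying Corollary \ref{cd={ab}^2}, and your vanishing coset $\epsilon=1$ is precisely the paper's vanishing factor $\langle q^{12};q^{12}\rangle_\infty=0$ that kills the middle term when it specializes Hirschhorn's identity \eqref{hirschhorn2}. The only differences are cosmetic --- you specialize before running the lattice decomposition rather than after, and your substitution is an index-$3$ parametrization of each coset rather than a unimodular change of variables as you call it --- so nothing of substance separates the two arguments.
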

\begin{thm} [The Septuple Product Identity]
For $a \neq 0$,
\begin{align} \label{spi}
&\langle a; q^2\rangle_\infty\langle a^2;q^2\rangle_\infty\notag
\\
=&\langle q^4; q^{10}\rangle_\infty \Big[\langle
a^5q^2;q^{10}\rangle_\infty+a^3\langle
a^5q^8;q^{10}\rangle_\infty\Big]-\langle q^2; q^{10}\rangle_\infty
\Big[a\langle a^5q^4; q^{10} \rangle_\infty+a^2\langle
a^5q^6;q^{10}\rangle_\infty\Big].
\end{align}
\end{thm}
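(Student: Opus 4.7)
The plan is to apply the Jacobi triple product identity to both sides of \eqref{spi}, converting each factor $\langle x;q\rangle_\infty$ into a bilateral theta series. The left side then becomes
\begin{align*}
\langle a;q^2\rangle_\infty\langle a^2;q^2\rangle_\infty
=\sum_{(n,m)\in\Z^2}(-1)^{n+m}\,q^{n(n-1)+m(m-1)}\,a^{n+2m},
\end{align*}
and each of the four products on the right expands similarly as a double sum over $\Z^2$ with $q$-exponents of the form $5j^2+\alpha j+5k^2+\beta k+\gamma$ and $a$-exponents of the form $5k+\delta$. The strategy is to exhibit an exact covering system for $\Z^2$ that reorganises the left-hand double sum into precisely the right-hand combination.

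The decisive choice is the integer matrix
\[M=\begin{pmatrix}2 & 1\\ -1 & 2\end{pmatrix},\]
of determinant $5$, which induces the substitution $n=2j+k$, $m=-j+2k$. Under this substitution the quadratic form decouples cleanly, $n^2+m^2=5(j^2+k^2)$, and the $a$-exponent becomes $n+2m=5k$. Therefore $\Z^2=\bigsqcup_{i=0}^{4}(M\Z^2+t_i)$ for coset representatives $t_i=(r_1,r_2)$ chosen so that $r_1+2r_2$ runs through a complete set of residues modulo $5$.

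Summing the left side coset by coset and applying the substitution, the contribution of the coset labelled by $(r_1,r_2)$ factorises as a product of two single theta sums, one in $j$ alone and one in $k$ alone, since no mixed $jk$ term arises in either the quadratic or the linear parts after translation. Each such single sum is then identified by a further application of Jacobi's triple product as either a $\langle q^c;q^{10}\rangle_\infty$ or an $\langle a^{5}q^d;q^{10}\rangle_\infty$, with $c$, $d$, the accompanying power of $a$, and the sign determined by $(r_1,r_2)$. I anticipate that the four residues $r_1+2r_2\equiv 0,1,2,3\pmod{5}$ produce, after shifting the summation indices to absorb the linear corrections, exactly the four terms of \eqref{spi}.

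The main obstacle is the residue class $r_1+2r_2\equiv 4\pmod{5}$, which has no counterpart on the right side and must contribute zero. The argument I expect to use is a sign-reversing involution on the relevant indexing set with no fixed points: for instance, the coefficient of $a^{-1}$ works out to $-\sum_{m\in\Z}(-1)^m q^{5m^2+5m+2}$, and the involution $m\mapsto -m-1$ preserves the $q$-exponent while flipping the sign, pairing all terms without a fixed point. Extending this cancellation across every coefficient $a^{5k+4}$ in the fifth coset is the crucial technical step; once in place, collecting the remaining four coset contributions produces \eqref{spi}.
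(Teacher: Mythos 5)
Your proposal is correct and is essentially the paper's own method: the paper proves \eqref{spi} by taking the determinant-$5$ ``generalized orthogonal'' matrix $\bigl(\begin{smallmatrix}1&2\\-2&1\end{smallmatrix}\bigr)$ in Theorem \ref{2 main theorem}, obtaining the five-term generalized septuple identity \eqref{sep}/\eqref{septuple1}, and then specializing $a_1=-a$, $a_2=-a^2$, whereupon the fifth term dies because it carries the factor $\langle q^{10};q^{10}\rangle_\infty=0$. Your matrix $\bigl(\begin{smallmatrix}2&1\\-1&2\end{smallmatrix}\bigr)$ plays the identical role (same decoupling $n^2+m^2=5(j^2+k^2)$, same five-coset ECS of $\Z^2$ indexed by $n+2m\bmod 5$), and your fixed-point-free involution killing the fifth coset is exactly the paper's vanishing fact $f(-1,x)=0$ in series form; moreover the worry you raise about extending the cancellation to all coefficients $a^{5k+4}$ is automatic, since after the coset sum factors, the vanishing single sum is the one in $j$ alone, independent of the power of $a$.
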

For the history of the quintuple product identity, readers can refer
to S. Cooper's survey \cite{cooper}. The septuple product identity
was first discovered by M.~D.~Hirschhorn \cite[(3.1)]{hir3} in 1983.

Winquist's identity is an example for products of four theta
functions.
\begin{thm} [Winquist's Identity]
For any nonzero complex numbers $a, b$,
\begin{align}
&\sum^\infty_{m=-\infty}\sum^\infty_{n=-\infty}(-1)^{m+n}{q^{\frac{3m^2+3n^2+3m+n}{2}}}
\big(a^{-3m}b^{-3n}-a^{-3m}b^{3n+1}-a^{-3n+1}b^{-3m-1}+a^{3n+2}b^{-3m-1}\big)
\notag \\ \label{winquist} =&(a, q/a,b, q/b,ab, q/ab,a/b,bq/a,q, q
 ; q)_\infty.
\end{align}
\end{thm}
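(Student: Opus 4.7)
The plan is to interpret Winquist's right-hand side as a normalized product of four modified Jacobi theta functions and then apply the paper's main theorem to an exact covering system of $\Z^{4}$ in order to recover the four-term double sum on the left. The starting observation is that, since $\langle x;q\rangle_\infty=(q;q)_\infty(x;q)_\infty(q/x;q)_\infty$, one has
\[
(a,q/a,b,q/b,ab,q/ab,a/b,bq/a,q,q;q)_\infty
=\frac{\langle a;q\rangle_\infty\langle b;q\rangle_\infty\langle ab;q\rangle_\infty\langle a/b;q\rangle_\infty}{(q;q)_\infty^{2}},
\]
so \eqref{winquist} reduces to an identity for a product of four theta functions divided by $(q;q)_\infty^{2}$.

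First I would expand each $\langle\cdot;q\rangle_\infty$ by the additive form of the Jacobi triple product, $\langle x;q\rangle_\infty=\sum_{k\in\Z}(-1)^{k}q^{k(k-1)/2}x^{k}$, to obtain a fourfold sum
\[
\sum_{(k_{1},k_{2},k_{3},k_{4})\in\Z^{4}}(-1)^{k_{1}+k_{2}+k_{3}+k_{4}}\,q^{\sum_{i}k_{i}(k_{i}-1)/2}\,a^{k_{1}+k_{3}+k_{4}}\,b^{k_{2}+k_{3}-k_{4}}.
\]
The $a$- and $b$-exponents are two $\Z$-linear forms on $\Z^{4}$, and a unimodular change of basis puts these forms into coordinate position, which is the setup required by the main theorem.

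Next I would apply the exact-covering-system decomposition. The aim is to choose a rank-four sublattice $L\subset\Z^{4}$ of index four, together with four coset representatives, such that on each coset the four summation variables split into two principal coordinates $(m,n)$ and two auxiliary coordinates. The principal coordinates must produce the quadratic-plus-linear $q$-exponent $\tfrac12(3m^{2}+3n^{2}+3m+n)$ (after completing squares) together with the sign $(-1)^{m+n}$, and they must carry the monomials in $a$ and $b$; the auxiliary coordinates must decouple in the $q$-exponent and sum independently to $(q;q)_\infty^{2}$, thereby canceling the denominator. The four coset representatives are to be engineered so that the four resulting monomials and signs are exactly $a^{-3m}b^{-3n}$, $-a^{-3m}b^{3n+1}$, $-a^{-3n+1}b^{-3m-1}$, and $a^{3n+2}b^{-3m-1}$.

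The main obstacle is the combinatorial construction of the correct $4\times 4$ integer matrix of determinant $\pm 4$ realizing $L$, and the selection of its four coset representatives. The asymmetry that the third and fourth bracketed terms contain $a^{\pm 3n+\mathrm{const}}$ while the corresponding $b$-exponents still depend on $m$ indicates that two of the representatives must effectively transpose the roles of the principal coordinates between the $a$- and $b$-factors. Verifying that the quadratic form $\sum_{i}k_{i}(k_{i}-1)/2$ block-diagonalizes on each coset, with the principal block producing precisely $\tfrac12(3m^{2}+3n^{2}+3m+n)$ and the auxiliary block telescoping to $(q;q)_\infty^{2}$, is the substantive step; once the lattice basis and representatives are in hand, each of the four cosets is matched to one term of \eqref{winquist} by a routine application of the Jacobi triple product, and summing over the four cosets completes the proof.
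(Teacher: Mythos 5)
Your reduction of the right-hand side to $\langle a;q\rangle_\infty\langle b;q\rangle_\infty\langle ab;q\rangle_\infty\langle a/b;q\rangle_\infty/(q;q)_\infty^2$ and the resulting fourfold sum with exponent forms $k_1+k_3+k_4$ and $k_2+k_3-k_4$ are correct, and invoking the paper's exact-covering-system machinery is the right instinct. (For the record, the paper itself does not prove Winquist's identity; it quotes it as a known result, and the nearest it comes is displaying Hirschhorn's transformation matrix in Section 3, which is exactly the matrix your plan would need.) However, there is a concrete arithmetic obstruction to your plan as stated: the sublattice cannot have index four. The quadratic part of the $q$-exponent is $\tfrac12\sum_i k_i^2$, with Gram matrix $\tfrac12 I$. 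For the two principal coordinates to produce $\tfrac12(3m^2+3n^2+\cdots)$ the corresponding diagonal entries of $B^TB$ must both equal $3$; and for each auxiliary coordinate to sum independently to $(q;q)_\infty=\sum_j(-1)^jq^{j(3j-1)/2}$ (the only single-variable theta series available here, by the pentagonal number theorem) its diagonal entry must also equal $3$. Hence $B^TB=3I$ and $(\det B)^2=81$, so the index is $9$, not $4$; an index-$4$ lattice would require $9\,d_3d_4=16$, which has no integer solution. This is consistent with Hirschhorn's matrix
\[
A=\begin{pmatrix} 1 & 0 & 1 & 1\\ 0 & 1 & 1 & -1\\ 1 & 1 & -1 & 0\\ 1 & -1 & 0 & -1\end{pmatrix},\qquad A^TA=3I,\qquad \det A=9 .
\]

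Consequently the decomposition produces nine coset contributions, and the substantive step your proposal omits is showing how nine terms collapse to the four monomials of \eqref{winquist}: several cosets yield a factor of the form $f(-1,x)=0$ and the survivors pair up under the $m\leftrightarrow n$ symmetry (which is precisely the source of the asymmetry you noticed, where the third and fourth bracketed terms carry $a^{\pm 3n+\mathrm{const}}$ against $b^{-3m-1}$). Without identifying the nine coset representatives and carrying out this vanishing/pairing analysis, the argument does not close; with the index corrected to $9$ and that analysis supplied, the approach does go through and is essentially Hirschhorn's proof, which the paper's Theorem \ref{main theorem} subsumes.
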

Winquist's identity plays a vital role in the first elementary proof
of Ramanujan's congruence $p(11n+6)\equiv 0 \pmod{11}$ given by L.~
Winquist in \cite{win1}, where $p(n)$ denote the partition function.
Readers can refer to its latest proof given by S.~H. Chan
\cite{winquist} for its brief history.

In \cite{hir1}, Hirschhorn obtained a generalization of Winquist's
identity by multiplying four triple products. He considered a very
nice transformation matrix and derived a four-parameter identity
which gives Winquist's identity as a special case. Inspired by
Hirschhorn's work, S.-S.~Huang \cite{huang1} showed that
Hirschhorn's generalization of the quintuple product identity
\cite{hir2} can also be obtained by using the idea in \cite{hir1}.
Other cases of this kind of transformation are discussed in
\cite[pp.~190--191]{Ber2}, \cite{hahn1}, and \cite{Baruah}. But all
of the above are just special cases; there have been few
systematical study on the conditions under which the product of two
or more theta functions can be written as the linear combination of
other products of theta functions.

In this paper, we consider a class of ``generalized orthogonal''
transformation matrices and give a systematic approach for obtaining
product identities.  Although we give the first general theorem on
the product of any $n$ theta functions, most of the examples
discussed in this paper are for products of two theta functions. Our
main theorem for products of two theta functions is a generalization
of the Schr\"{o}ter formula.  Many known identities for products of
two theta functions, including M. D. Hirschhorn's generalization of
the quintuple product identity, the septuple product identity, some
modular relations for the G\"{o}llnitz-Gordon functions found by
S.-S.~Huang in \cite{huang3}, identities involving septic
Rogers-Ramanujan functions obtained by H. Hahn \cite{hahn1}, a
general theorem by W. Chu and Q. Yan \cite{chu}, and the
Blecksmith-Brillhart-Gerst theorem \cite{BBG1}, are shown to be
special cases of our main theorem. We also derive several new
theorems for products of three or more theta functions as
applications, including an analogue of Winquist's identity and a new
representation of $(q; q)^8_\infty$.

Let $l_i$ $\in \Z^{+}$, $a_ib_i=q^{l_i}$, ($i=1,2, \dots, n$).
Without lose of generality, suppose $l_1\leq l_{2}\cdots \leq l_n$.
We consider a product of $n$ theta functions
\begin{eqnarray} \label{main sum}
S:=\prod_{i=1}^nf(a_i, b_i) =\sum^\infty_{x_1, x_2\cdots,
x_n=-\infty}
{a_1}^{\frac{x_1^2+x_1}{2}}{b_1}^{\frac{x_1^2-x_1}{2}}\cdots{a_n}^{\frac{x_n^2+x_n}{2}}{b_n}^{\frac{x_n^2-x_n}{2}}.
\end{eqnarray}
Next, we change the variables from $x_i$ to $y_i$ $(i=1,2,\dots,n)$
by the transformation $y=Ax$, where $A$ is an integer matrix with
$\det A\neq 0$. Set
\[
x=\left(
\begin{array}{rrrr}
x_1\\
x_2\\
\vdots\\
x_n
\end{array}
\right), \qquad y=\left(
\begin{array}{rrr}
y_1\\
y_2\\
\vdots\\
y_n
\end{array}
\right).
\]
By the inverse formula,
\begin{align} \label{inv}
x=A^{-1}y=\frac{1}{\det A}A^*y,
\end{align}
where $A^*$ is the adjugate of $A$.

For $k\in \Z$, $1\leq k\leq n$, the $k$th determinantal divisor of
$A$, denoted by $d_k(A)$, is defined as the greatest common divisor
of all the $k$ by $k$ determinantal minors of $A$. It is easy to see
that $d_n=\det A$. We let $d_0=1$ for convenience. Note that
$d_k|d_{k+1}$. The invariant factors of $A$ are defined as
\[S_k(A)=\frac{d_k}{d_{k-1}}.\]
By the Smith normal form theorem, we have $\det A=s_1\cdots s_n$. So
\begin{align} \label{detsn}
\frac{\det A}{d_{n-1}(A)}={s_n(A)}.
\end{align}
From \eqref{detsn}, we can rewrite \eqref{inv} as
\begin{align} \label{inv1}
x=\frac{1}{s_n(A)}.\frac{A^*}{d_{n-1}(A)}y,
\end{align}
where $\frac{A^*}{d_{n-1}(A)}$ is an integer matrix. Letting
$\hbox{sgn}(s_n(A))\frac{A^*}{d_{n-1}(A)}=B$, $|s_n(A)|=d$, we can
rewrite \eqref{inv1} as
\begin{align} \label{x y}
x=\frac{1}{d}By.
\end{align}
So we have
\begin{align} \label{congruence equation mod d}
By \equiv 0 \pmod {d}.
\end{align}

We want to write \eqref{main sum} as a linear combination of
products of other theta functions. After replacing the variables
$x_1,x_2,\dots, x_n$ with $y_1,y_2,\dots, y_n$, we have the
restriction \eqref{congruence equation mod d} regarding $y$. So
$y_i$ cannot take all the integer values unless $d=1$. The system of
homogeneous congruences \eqref{congruence equation mod d} is always
consistent since $y\equiv 0 \pmod d$ is a solution. Suppose we
have $k$ solutions to \eqref{congruence equation mod d}. \\
Reduced case: $y\equiv 0\pmod d$.  We can replace $y$ by $dy$ in
\eqref{x y}. So we have $x=By$. Replacing $x$ with $By$ in the
right-hand side of \eqref{main sum}, we obtain the contribution of
this case to $S$
\begin{align}
\sum^\infty_{y_1, \cdots,
y_n=-\infty}&{a_1}^{\frac{(b_{11}y_1+b_{12}y_2+\dots+b_{1n}y_n)^2+(b_{11}y_1+b_{12}y_2+\dots+b_{1n}y_n)}{2}}{b_1}^{\frac{(b_{11}y_1+b_{12}y_2+\dots+b_{1n}y_n)^2-(b_{11}y_1+b_{12}y_2+\dots+b_{1n}y_n)}{2}}\notag
\\
\label{case I}
\cdots&{a_n}^{\frac{(b_{n1}y_1+b_{n2}y_2+\dots+b_{nn}y_n)^2+(b_{n1}y_1+b_{n2}y_2+\dots+b_{nn}y_n)}{2}}{b_n}^{\frac{(b_{n1}y_1+b_{n2}y_2+\dots+b_{nn}y_n)^2-(b_{n1}y_1+b_{n2}y_2+\dots+b_{nn}y_n)}{2}}.
\end{align}

We need the coefficients of all $y_iy_j=0$ $(i\neq j,\,\, i, j =1,
2, \dots, n)$ in order to separate $y_1, y_2, \dots, y_n$ and write
$S$ as a linear combination of products of theta functions. So we
have the requirements
\begin{align} \label{orthogonal}
\left\{
\begin{array}{rcrcrcr}
l_1b_{11}b_{12} &+& l_2b_{21}b_{22} &+\dots+& l_nb_{n1}b_{n2} =& 0, \\
l_1b_{11}b_{13} &+& l_2b_{21}b_{23} &+\dots+& l_nb_{n1}b_{n3} =& 0,  \\
\vdots & \vdots & \vdots & \vdots & \vdots \\
l_1b_{1(n-1)}b_{1n} &+& l_2b_{2(n-1)}b_{2n} &+\dots+&
l_nb_{n(n-1)}b_{nn} =& 0.
\end{array}
\right.
\end{align}
By Jacobi's triple product identity \eqref{JTI}, now we can rewrite
\eqref{case I} as a product of $n$ theta functions
\begin{align}
&f({a_1}^{\frac{b_{11}^2+b_{11}}{2}}{b_1}^{\frac{b_{11}^2-b_{11}}{2}}\cdots
{a_n}^{\frac{b_{n1}^2+b_{n1}}{2}}{b_n}^{\frac{b_{n1}^2-b_{n1}}{2}},
{a_1}^{\frac{b_{11}^2-b_{11}}{2}}{b_1}^{\frac{b_{11}^2+b_{11}}{2}}\cdots
{a_n}^{\frac{b_{n1}^2-b_{n1}}{2}}{b_n}^{\frac{b_{n1}^2+b_{n1}}{2}})
\cdots \notag \\
\label{case I1}&\times
f({a_1}^{\frac{b_{1n}^2+b_{1n}}{2}}{b_1}^{\frac{b_{1n}^2-b_{1n}}{2}}\cdots
{a_n}^{\frac{b_{nn}^2+b_{nn}}{2}}{b_n}^{\frac{b_{nn}^2-b_{nn}}{2}},
{a_1}^{\frac{b_{1n}^2-b_{1n}}{2}}{b_1}^{\frac{b_{1n}^2+b_{1n}}{2}}\cdots
{a_n}^{\frac{b_{nn}^2-b_{nn}}{2}}{b_n}^{\frac{b_{nn}^2+b_{nn}}{2}}).
\end{align}

General case: If $y\equiv c_r\pmod d$ $(r=0, 1,2, \dots, k-1)$ is a
solution of \eqref{congruence equation mod d}, we substitute $y$
with $dy+c_r$ in \eqref{x y}. Then we have $x=By+\frac{1}{d}Bc_r$,
where $\frac{1}{d}Bc_r$ is an $n$-dimensional integer vector. By
adding the contribution of each solution to the sum $S$, we can
write $S$ as a linear combination of products of theta functions. We
need \eqref{orthogonal} in all cases in order to separate $y_1, y_2,
\dots, y_n$.

We can find that in the first theta function in \eqref{case I1},
\begin{align*}
&{a_1}^{\frac{b_{11}^2+b_{11}}{2}}{b_1}^{\frac{b_{11}^2-b_{11}}{2}}\cdots
{a_n}^{\frac{b_{n1}^2+b_{n1}}{2}}{b_n}^{\frac{b_{n1}^2-b_{n1}}{2}}\times
{a_1}^{\frac{b_{11}^2-b_{11}}{2}}{b_1}^{\frac{b_{11}^2+b_{11}}{2}}\cdots
{a_n}^{\frac{b_{n1}^2-b_{n1}}{2}}{b_n}^{\frac{b_{n1}^2+b_{n1}}{2}}\notag
\\
=&{(a_1b_1)}^{{b_{11}}^2}\cdots
{(a_nb_n)}^{{b_{n1}}^2}=q^{l_1{b_{11}}^2+\cdots+l_n{b_{n1}}^2}.
\end{align*}
Similarly we can find that the rest of the products in \eqref{case
I1} are $q^{l_1{b_{12}}^2+\cdots+l_n{b_{n2}}^2}$, $\cdots$,
$q^{l_1{b_{1n}}^2+\cdots+l_n{b_{nn}}^2}$. It can be shown that this
pattern holds for any part in the linear combination. It is very
useful in finding the theta functions in the linear combination and
is illustrated later in the proof of \eqref{hahn1}.

Let $B=(\mathbf{b_1}, \dots, \mathbf{b_n})$, where $\mathbf{b_j}$ is
the $j$th column of $B$. If we choose integers $l_1=l_2=\cdots=l_n$,
then \eqref{orthogonal} implies that $\{\mathbf{b_1}, \dots,
\mathbf{b_n}\}$ is an orthogonal set. If the set of all the columns
of a matrix $B$ is an orthogonal set, then $B^TB$ is a diagonal
matrix with all entries positive integers on the main diagonal. We
define
\[D=\begin{pmatrix}

l_{1} & 0 & 0 & \cdots & 0\\

0 & l_{2} & 0 & \cdots & 0\\

0 & 0 & l_{3} & \cdots & 0 \\

\vdots & \vdots & \vdots & \ddots & \vdots \\

0 & 0 & 0 & \cdots & l_{n}
\end{pmatrix}\ ,\]
where $l_1,l_2,\dots, l_n$ are positive integers. Then
\eqref{orthogonal} implies that $B^TDB$ is a diagonal matrix with
all entries positive integers on the main diagonal. So the set of
all the columns of $B$ is a kind of ``generalized orthogonal'' set.
For fixed $l_i$ $(i=1, 2, \dots, n)$, there are infinitely many
solutions $b_{ij}$ $(i, j=1, 2, \dots, n)$ to the system of
equations \eqref{orthogonal}. So product of any $n$ theta functions
can be written as the linear combinations of other products of theta
functions, not in unique way.

 We have $\det A^*={(\det A)}^{n-1}$. Since $A\cdot
A^*=\det A\cdot I$, where $A$ is an integer matrix, we require
$(\hbox{det} A)\cdot {(A^*)}^{-1}$ to be an integer matrix.

Without lose of generality, we have the following procedure for
obtaining series-product identities.\\
1. For fixed positive integers $l_1,l_2, \dots, l_n$, find all
$n\times n$ matrices $B'$ satisfying the ``generalized orthogonal''
relation \eqref{orthogonal}, where $\det B'=\pm {(d')}^{n-1}$ and
$d'{B'}^{-1}$ is an integer matrix, where $d'\in N$.
\\
2. For the system of congruences $B'y\equiv 0\pmod {d'}$ obtained
from step 1, we divide by the greatest common divisor on both sides
and rewrite it as $By\equiv 0\pmod d$. Next we solve the system of
congruences $By\equiv 0\pmod d$. Suppose we have $k$ solutions. By
computing the contribution of each solution, we can write each
product of $n$ theta functions as the linear combination of $k$
products of $n$ theta functions.

If $y\equiv c_r\pmod d$ $(r=0, 1,2, \dots, k-1)$ is the solution set
of \eqref{congruence equation mod d}, we substitute each $y$ with
$dy+c_r$ in \eqref{x y}. Then we have $x=By+\frac{1}{d}Bc_r$, $(r=0,
1,2, \dots, k-1)$. We assume $c_0=0$. It is easy to see that for
$y\in \Z^n$, $\{By+\frac{1}{d}Bc_r\}^{k-1}_{r=0}$ covers $\Z^n$ and
there is no overlap between the members. By defining the binary
operation as vector addition in the abelian additive group $\Z^n$,
$B\Z^n \lhd \Z^n$, and two vectors $\mathbf{a} \equiv \mathbf{b}$ if
and only if $\mathbf{{a}}-\mathbf{b}\in B\Z^n$. So now we can take
$\{B\Z^n+\frac{1}{d}Bc_r\}^{k-1}_{r=0}$ as an integer matrix exact
covering system of $\Z^n$. In other words, a partition of $\Z^n$
into a lattice and a finite number of its translates. If we view
this geometrically, we are counting the distinct points inside the
$n$-dimensional parallelotope spanned by the columns of $B$ (Two
points are counted as one if the difference of the corresponding
vector is a linear combination of the columns of $B$). From M.~A.
Fiol \cite[Proposition 2]{fiol}, the number of equivalence classes
in the quotient group is $|\det B|$. So $k=|\det B|$. We know that
$|\det B| =$ volume of the parallelotope spanned by the columns of
$B$. So this implies that numerically the number of distinct points
inside a parallelotope equals the volume of the parallelotope.

So now it is clear that we are really looking for a special kind of
ECS of $\Z^n$ corresponds to ``generalized orthogonal'' matrix $B$.
Although we can find the ECS by solving the system of congruences
\eqref{congruence equation mod d}, there is no general formula for
the solution set of system of congruences. A better approach is to
find the congruence classes by finding $k$ distinct points in $\Z^n$
if $B$ is known with $|\det B|=k$. We have the same covering system
if we multiply a column of $B$ by $-1$, or we interchange two
columns of $B$. So here we can assume $\det B>0$.

By choosing special coset representatives, we give the main theorem
for products of $n$ theta functions.
\begin{thm} \label{main theorem}
Let $l_i \in \Z^{+}$, $a_ib_i=q^{l_i}$, ($i=1,2, \dots, n$). Let
$B=(b_{ij})_{n\times n}$ be an invertible integer matrix satisfying
\eqref{orthogonal}. Let $k=\det B>0$.  Let $B^*$ be the adjugate of
$B$. At least one of the entries of $B^*$ has no common factor with
$k$. We suppose an entry in the $j$th column of $B^*$ is coprime to
$k$. Then
\begin{align}
&\prod_{i=1}^nf(a_i, b_i)\notag \\
=&\sum^{k-1}_{r=0}{a_j}^{\frac{r^2+r}{2}}{b_j}^{\frac{r^2-r}{2}}f\Big({a_1}^{\frac{b_{11}^2+b_{11}}{2}}{b_1}^{\frac{b_{11}^2-b_{11}}{2}}
\cdots{a_j}^{\frac{b_{j1}^2+b_{j1}}{2}+b_{j1}r}{b_j}^{\frac{b_{j1}^2-b_{j1}}{2}+b_{j1}r}\cdots
{a_n}^{\frac{b_{n1}^2+b_{n1}}{2}}{b_n}^{\frac{b_{n1}^2-b_{n1}}{2}},\notag
\\
&{a_1}^{\frac{b_{11}^2-b_{11}}{2}}{b_1}^{\frac{b_{11}^2+b_{11}}{2}}
\cdots{a_j}^{\frac{b_{j1}^2-b_{j1}}{2}-b_{j1}r}{b_j}^{\frac{b_{j1}^2+b_{j1}}{2}-b_{j1}r}\cdots
{a_n}^{\frac{b_{n1}^2-b_{n1}}{2}}{b_n}^{\frac{b_{n1}^2+b_{n1}}{2}}\Big)\cdots \notag \\
\label{genthm} &\times
f\Big({a_1}^{\frac{b_{1n}^2+b_{1n}}{2}}{b_1}^{\frac{b_{1n}^2-b_{1n}}{2}}
\cdots{a_j}^{\frac{b_{jn}^2+b_{jn}}{2}+b_{jn}r}{b_j}^{\frac{b_{jn}^2-b_{jn}}{2}+b_{jn}r}\cdots
{a_n}^{\frac{b_{nn}^2+b_{nn}}{2}}{b_n}^{\frac{b_{nn}^2-b_{nn}}{2}},\notag
\\
&{a_1}^{\frac{b_{1n}^2-b_{1n}}{2}}{b_1}^{\frac{b_{1n}^2+b_{1n}}{2}}
\cdots{a_j}^{\frac{b_{jn}^2-b_{jn}}{2}-b_{jn}r}{b_j}^{\frac{b_{jn}^2+b_{jn}}{2}-b_{jn}r}\cdots
{a_n}^{\frac{b_{nn}^2-b_{nn}}{2}}{b_n}^{\frac{b_{nn}^2+b_{nn}}{2}}\Big).
\end{align}
\end{thm}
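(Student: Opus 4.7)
The plan is to follow the change-of-variables / coset-decomposition procedure developed in the paragraphs preceding the theorem, pinning down two ingredients: the choice of coset representatives for $\Z^n / B\Z^n$, and the algebraic identification of each resulting summand with a product of theta functions as on the right-hand side of \eqref{genthm}.

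First, I would expand $S$ as the $n$-fold sum \eqref{main sum} and use the partition $\Z^n = \bigsqcup_{r=0}^{k-1}(B\Z^n + \epsilon^{(r)})$, which is available since $|\Z^n/B\Z^n| = \det B = k$ (via Fiol's proposition quoted earlier). For the representatives I take $\epsilon^{(r)} := r\mathbf{e}_j$, with $\mathbf{e}_j$ the $j$th standard basis vector. Since $BB^* = kI$, the containment $r\mathbf{e}_j \in B\Z^n$ is equivalent to $(r/k) B^* \mathbf{e}_j \in \Z^n$, i.e.\ to $k$ dividing $r$ times every entry of the $j$th column of $B^*$. By hypothesis some such entry is coprime to $k$, so $k \mid r$; hence the vectors $\{r\mathbf{e}_j\}_{r=0}^{k-1}$ lie in $k$ distinct cosets and exhaust $\Z^n/B\Z^n$.

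Next, for each $r\in\{0,\ldots,k-1\}$, I substitute $x_i = \sum_{m=1}^n b_{im}y_m + r\delta_{ij}$ into the summand of \eqref{main sum} and expand $(x_i^2 \pm x_i)/2$. Using $a_i b_i = q^{l_i}$, the $q$-part of the exponent becomes $\tfrac12 \sum_{m,m'}\bigl(\sum_i l_i b_{im} b_{im'}\bigr) y_m y_{m'} + r l_j \sum_m b_{jm} y_m + l_j r^2/2$, whose off-diagonal ($m \neq m'$) quadratic pieces vanish by the orthogonality relation \eqref{orthogonal}. The $(a_i/b_i)$-part is linear in $x_i$, hence in $y$, so it separates into a sum over $m$ automatically. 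Thus the whole summand factors as a product over $m$ of expressions depending only on $y_m$, times the $r$-dependent prefactor $(a_j b_j)^{r^2/2} (a_j/b_j)^{r/2} = a_j^{r(r+1)/2} b_j^{r(r-1)/2}$ collected from the terms independent of $y$.

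Finally, applying Jacobi's triple product \eqref{JTI} to each inner sum $\sum_{y_m \in \Z}$ converts it into a theta function $f(A_m, B_m)$; reading off $A_m B_m = \prod_i (a_i b_i)^{b_{im}^2}$ (coefficient of $y_m^2$) and $A_m/B_m = (a_j b_j)^{2 r b_{jm}} \prod_i (a_i/b_i)^{b_{im}}$ (coefficient of $y_m$) yields exactly the two arguments shown in \eqref{genthm}, including the asymmetric shifts $\pm b_{jm} r$ on the $a_j$- and $b_j$-slots. Summing over $r = 0, \ldots, k-1$ then produces \eqref{genthm}. The main obstacle is the exponent bookkeeping in the third step: one must verify that the $r$-shift lands only on $a_j$ and $b_j$ with the correct asymmetric signs, and that the $r$-dependent prefactor assembles uniformly in $m$; everything else follows cleanly from the generalized orthogonality \eqref{orthogonal}.
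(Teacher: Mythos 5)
Your proposal is correct and follows essentially the same route as the paper: the paper's formal proof consists only of the coprimality argument showing that the cosets $B\Z^n + r\mathbf{e}_j$, $0 \le r \le k-1$, are pairwise distinct (and hence exhaust $\Z^n/B\Z^n$, which has $|\det B|=k$ elements), deferring the change of variables, the cancellation of the cross terms via \eqref{orthogonal}, and the application of the triple product identity to the general discussion preceding the theorem. You carry out all of these steps explicitly, and your exponent bookkeeping agrees with \eqref{genthm}.
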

\begin{rem}
The above theorem remains valid if the sum over $r$ runs over a
complete residue system $\pmod k$. For instance, $[-\frac{k}{2}]+1
\leq r \leq [\frac{k}{2}]$.
\end{rem}
\begin{proof}
For \eqref{genthm}, we only need to show that for integer matrix
exact covering system \[\Bigg\{B\Z^n+\left(
\begin{array}{rrrr}
0\\
\vdots\\
r\\
\vdots\\
0
\end{array}
\right)\Bigg\}_{r=0}^{k-1}\,,\] where $r$ is in the $j$th row, there
is no overlap between the members of the covering system. That is,
$By=\left(
\begin{array}{rrrr}
0\\
\vdots\\
r\\
\vdots\\
0
\end{array}
\right),$ $(r=1, 2, \dots, k-1)$ has no integer vector solutions.

From $By=\left(
\begin{array}{rrrr}
0\\
\vdots\\
r\\
\vdots\\
0
\end{array}
\right)$, by the inverse formula, we have \[y=\frac{1}{k}B^*\left(
\begin{array}{rrrr}
0\\
\vdots\\
r\\
\vdots\\
0
\end{array}
\right)=\frac{1}{k}\left(
\begin{array}{rrrr}
b^*_{1j}r\\
b^*_{2j}r\\
\vdots\\
b^*_{nj}r
\end{array}
\right).\] Since an entry in the $j$th column of $B^*$ is coprime to
$k$, at least one entry of $\left(
\begin{array}{rrrr}
b^*_{1j}r\\
b^*_{2j}r\\
\vdots\\
b^*_{nj}r
\end{array}
\right)$ is not divisible by $k$.  So $By=\left(
\begin{array}{rrrr}
0\\
\vdots\\
r\\
\vdots\\
0
\end{array}
\right), $ $(r=1, 2, \dots, k-1)$ has no integer vector solution. We
finish the proof.
\end{proof}
Theorem \ref{genthm} is symmetric with respect to $a_i$ and $b_i$
$(i=1, 2, \cdots, n)$. Interchanging the positions of $a_i$ and
$b_i$ is equivalent to multiple by $-1$ on the $i$th row of $B$.

\begin{cor}
In Theorem \ref{2 main theorem}, if we further require that for any
column of $B$, the sum of all the entries is an even number, then
\begin{align}
&\prod_{i=1}^nf(a_i, b_i)+\prod_{i=1}^nf(-a_i, -b_i)\notag \\
=&2\sum^{\frac{k-2}{2}}_{r=0}{a_j}^{2r^2+r}{b_j}^{2r^2-r}
f\Big({a_1}^{\frac{b_{11}^2+b_{11}}{2}}{b_1}^{\frac{b_{11}^2-b_{11}}{2}}
\cdots{a_j}^{\frac{b_{j1}^2+b_{j1}}{2}+2b_{j1}r}{b_j}^{\frac{b_{j1}^2-b_{j1}}{2}+2b_{j1}r}\cdots
{a_n}^{\frac{b_{n1}^2+b_{n1}}{2}}{b_n}^{\frac{b_{n1}^2-b_{n1}}{2}},\notag
\\
&{a_1}^{\frac{b_{11}^2-b_{11}}{2}}{b_1}^{\frac{b_{11}^2+b_{11}}{2}}
\cdots{a_j}^{\frac{b_{j1}^2-b_{j1}}{2}-2b_{j1}r}{b_j}^{\frac{b_{j1}^2+b_{j1}}{2}-2b_{j1}r}\cdots
{a_n}^{\frac{b_{n1}^2-b_{n1}}{2}}{b_n}^{\frac{b_{n1}^2+b_{n1}}{2}}\Big)\cdots \notag \\
&\times
f\Big({a_1}^{\frac{b_{1n}^2+b_{1n}}{2}}{b_1}^{\frac{b_{1n}^2-b_{1n}}{2}}
\cdots{a_j}^{\frac{b_{jn}^2+b_{jn}}{2}+2b_{jn}r}{b_j}^{\frac{b_{jn}^2-b_{jn}}{2}+2b_{jn}r}\cdots
{a_n}^{\frac{b_{nn}^2+b_{nn}}{2}}{b_n}^{\frac{b_{nn}^2-b_{nn}}{2}},\notag
\\
\label{genthmsum1}
&{a_1}^{\frac{b_{1n}^2-b_{1n}}{2}}{b_1}^{\frac{b_{1n}^2+b_{1n}}{2}}
\cdots{a_j}^{\frac{b_{jn}^2-b_{jn}}{2}-2b_{jn}r}{b_j}^{\frac{b_{jn}^2+b_{jn}}{2}-2b_{jn}r}\cdots
{a_n}^{\frac{b_{nn}^2-b_{nn}}{2}}{b_n}^{\frac{b_{nn}^2+b_{nn}}{2}}\Big).
\end{align}

\begin{align}
&\prod_{i=1}^nf(a_i, b_i)-\prod_{i=1}^nf(-a_i,
-b_i)\notag \\
=&2\sum^{\frac{k-2}{2}}_{r=0}{a_j}^{2r^2+3r+1}{b_j}^{2r^2+r}\notag \\
&f({a_1}^{\frac{b_{11}^2+b_{11}}{2}}{b_1}^{\frac{b_{11}^2-b_{11}}{2}}
\cdots{a_j}^{\frac{b_{j1}^2+3b_{j1}}{2}+2b_{j1}r}{b_j}^{\frac{b_{j1}^2+b_{j1}}{2}+2b_{j1}r}\cdots
{a_n}^{\frac{b_{n1}^2+b_{n1}}{2}}{b_n}^{\frac{b_{n1}^2-b_{n1}}{2}},\notag
\\
&{a_1}^{\frac{b_{11}^2-b_{11}}{2}}{b_1}^{\frac{b_{11}^2+b_{11}}{2}}
\cdots{a_j}^{\frac{b_{j1}^2-3b_{j1}}{2}-2b_{j1}r}{b_j}^{\frac{b_{j1}^2-b_{j1}}{2}-2b_{j1}r}\cdots
{a_n}^{\frac{b_{n1}^2-b_{n1}}{2}}{b_n}^{\frac{b_{n1}^2+b_{n1}}{2}})\cdots \notag \\
&\times
f\Big({a_1}^{\frac{b_{1n}^2+b_{1n}}{2}}{b_1}^{\frac{b_{1n}^2-b_{1n}}{2}}
\cdots{a_j}^{\frac{b_{jn}^2+3b_{jn}}{2}+2b_{jn}r}{b_j}^{\frac{b_{jn}^2+b_{jn}}{2}+2b_{jn}r}\cdots
{a_n}^{\frac{b_{nn}^2+b_{nn}}{2}}{b_n}^{\frac{b_{nn}^2-b_{nn}}{2}},\notag
\\
\label{genthmsum2}
&{a_1}^{\frac{b_{1n}^2-b_{1n}}{2}}{b_1}^{\frac{b_{1n}^2+b_{1n}}{2}}
\cdots{a_j}^{\frac{b_{jn}^2-3b_{jn}}{2}-2b_{jn}r}{b_j}^{\frac{b_{jn}^2-b_{jn}}{2}-2b_{jn}r}\cdots
{a_n}^{\frac{b_{nn}^2-b_{nn}}{2}}{b_n}^{\frac{b_{nn}^2+b_{nn}}{2}}\Big).
\end{align}
\end{cor}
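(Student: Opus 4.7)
The plan is to apply Theorem~\ref{main theorem} twice: once to $\prod_{i=1}^n f(a_i,b_i)$ and once to $\prod_{i=1}^n f(-a_i,-b_i)$, and then add or subtract the two resulting expansions. The hypothesis that each column of $B$ sums to an even integer should make the $n$-fold theta product on the right-hand side of \eqref{genthm} invariant under $a_i\mapsto -a_i$, $b_i\mapsto -b_i$, while the prefactor $a_j^{(r^2+r)/2}b_j^{(r^2-r)/2}$ contributes only a clean sign $(-1)^r$. The sum and difference identities then isolate the even- and odd-$r$ parts of the sum, which a re-indexing converts into \eqref{genthmsum1} and \eqref{genthmsum2}.

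The sign analysis is the only nontrivial step. Under $a_i\mapsto -a_i$, $b_i\mapsto -b_i$, the prefactor acquires $(-1)^{(r^2+r)/2+(r^2-r)/2}=(-1)^{r^2}=(-1)^r$. In the first argument of the $s$-th theta factor on the right of \eqref{genthm}, each pair $a_i,b_i$ contributes $(-1)^{(b_{is}^2+b_{is})/2+(b_{is}^2-b_{is})/2}=(-1)^{b_{is}^2}=(-1)^{b_{is}}$ since $b_{is}^2\equiv b_{is}\pmod 2$; the $r$-shift $+b_{js}r$ in row $j$ appears identically in the $a_j$ and $b_j$ exponents, contributing only $(-1)^{2b_{js}r}=+1$. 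The same holds for the second argument. The overall sign of each theta factor is thus $(-1)^{\sum_i b_{is}}$, which equals $+1$ by hypothesis, so
\begin{align*}
\prod_{i=1}^n f(-a_i,-b_i)=\sum_{r=0}^{k-1}(-1)^r\,a_j^{(r^2+r)/2}b_j^{(r^2-r)/2}\cdot(\text{same theta factors as in \eqref{genthm}}).
\end{align*}

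Adding this to \eqref{genthm} doubles the even-$r$ terms and kills the odd-$r$ ones; subtracting does the reverse. The column-sum hypothesis forces $k=\det B$ to be even, since the all-ones row vector annihilates $B$ modulo $2$, rendering $B$ singular mod $2$; hence the even indices are $r=0,2,\dots,k-2$ and the odd indices are $r=1,3,\dots,k-1$. For the sum identity, re-index by $r=2m$ with $m=0,1,\dots,(k-2)/2$: the prefactor exponents $(r^2\pm r)/2$ become $2m^2\pm m$, and each shift $b_{js}r$ becomes $2b_{js}m$, directly matching \eqref{genthmsum1}. For the difference identity, re-index by $r=2m+1$: the prefactor exponents become $2m^2+3m+1$ and $2m^2+m$, while $\frac{b_{js}^2\pm b_{js}}{2}+b_{js}r$ in the first argument collapses to $\frac{b_{js}^2+3b_{js}}{2}+2b_{js}m$ and $\frac{b_{js}^2+b_{js}}{2}+2b_{js}m$, and the corresponding expressions $\frac{b_{js}^2\mp b_{js}}{2}-b_{js}r$ in the second argument collapse to $\frac{b_{js}^2-3b_{js}}{2}-2b_{js}m$ and $\frac{b_{js}^2-b_{js}}{2}-2b_{js}m$, matching \eqref{genthmsum2}. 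The main obstacle is keeping the four sign contributions (prefactor, $a_j$-exponent, $b_j$-exponent, row-$j$ $r$-shift) straight; once that is done, everything else is routine indexing arithmetic.
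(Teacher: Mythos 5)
Your proposal is correct and takes essentially the same route as the paper: replace $a_i$ with $-a_i$ and $b_i$ with $-b_i$ in \eqref{genthm}, note that the even column sums leave every theta factor on the right-hand side unchanged and force $k=\det B$ to be even, then add or subtract the two expansions and re-index the surviving even (resp.\ odd) values of $r$. Your explicit sign bookkeeping and the mod-$2$ singularity argument for the evenness of $k$ merely flesh out what the paper dismisses as ``easy to see.''
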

\begin{proof}
Since for any column of $B$, the sum of all the entries is an even
number, it is easy to see that $k$ is even, and all of
${{b_{1j}}^2+\cdots {b_{nj}}^2}$ $(j=1, 2, \cdots, n)$ are even.
Replacing $a_i$ with $-a_i$, $b_i$ with $-b_i$ in \eqref{genthm},
adding it to \eqref{genthm}, and finally replacing $2r$ with $r$, we
obtain \eqref{genthmsum1}. The proof of \eqref{genthmsum2} is
similar.
\end{proof}

 Next we give an estimate of $\det B$
if all the entries of $B$ are nonzero. We have
\[B^TDB=\]
\begin{align} \label{matrix}
\begin{pmatrix}
l_1b_{11}^2+l_2b_{21}^2+\cdots+l_nb_{n1}^2 & 0 & 0 & \cdots & 0\\
0 & l_1b_{12}^2+l_2b_{22}^2+\cdots+l_nb_{n2}^2 & 0 & \cdots & 0\\
\vdots & \vdots & \vdots & \ddots & \vdots \\
0 & 0 & 0 & \cdots & l_1b_{1n}^2+l_2b_{2n}^2+\cdots+l_nb_{nn}^2
\end{pmatrix}.
\end{align}
We take the determinant of both sides of \eqref{matrix} if all the
entries of $B$ are nonzero and obtain
\begin{align*}
&{(\hbox{det} B)}^2\cdot l_1l_2\cdots l_n
\notag \\
=&(l_1b_{11}^2+l_2b_{21}^2+\cdots+l_nb_{n1}^2)(l_1b_{12}^2+l_2b_{22}^2+\cdots+l_2b_{n2}^2)\cdots(l_1b_{1n}^2+l_2b_{2n}^2+\cdots+l_nb_{nn}^2).
\end{align*}
So
\begin{align} \label{detb}
\hbox{det}
B=\sqrt{{\frac{(l_1b_{11}^2+l_2b_{21}^2+\cdots+l_nb_{n1}^2)\cdots(l_1b_{1n}^2+l_2b_{2n}^2+\cdots+l_nb_{nn}^2)}{l_1l_2
\cdots l_n}}}.
\end{align}

We have \begin{align} \label{det2}
l_1b_{1i}^2+l_2b_{2i}^2+\cdots+l_nb_{ni}^2\geq
n{(l_1b_{1i}^2l_2b_{2i}^2\cdots l_nb_{ni}^2)}^\frac{1}{n}\quad (i=1,
2, \dots, n).
\end{align}
By \eqref{detb} and \eqref{det2}, if we denote the product of all
entries of the matrix $B$ by $\triangle B$, then we can obtain
\begin{align*}
\det B \geq n^{\frac{n}{2}}(\triangle B)^{\frac{1}{n}}.
\end{align*}

In order to minimize the number of solutions of the system of
congruences \eqref{congruence equation mod d}, we need to find
``generalized orthogonal" matrices with absolute values of
determinants as small as possible. All the 3 by 3 and 4 by 4
matrices in this paper have entries chosen from $-2, -1, 0, 1, 2$.

\section{Products of Two Theta Functions}
For a product of two theta functions $f(a, b)f(c,d)$ with
$ab=q^{l_1}$ and $cd=q^{l_2}$, we need to find $2$ by $2$ invertible
integer matrices $B$ satisfying the generalized orthogonal relation
$l_1b_{11}b_{12}+l_2b_{21}b_{22}=0$. Without lose of generality, we
can assume that three of $b_{11}, b_{12}, b_{21}, b_{22}$ are
positive, one is negative. Now we give the following theorem for
product of two theta functions, which is a special case of Theorem
\ref{main theorem}.
\begin{thm} \label{2 main theorem}
Let $ab=q^{l_1}$ and $cd=q^{l_2}$, $l_i \in \Z^{+}$, where $i=1$,
$2$, and let $B=(b_{ij})$ be a $2\times2$ invertible integer matrix
such that $l_1b_{11}b_{12}+l_2b_{21}b_{22}=0$, $\gcd(b_{21},
b_{11}b_{22})=1$ or $\gcd(b_{22}, b_{12}b_{21})=1$. Let $k=\det
B=b_{11}b_{22}-b_{12}b_{21}> 0$. Then
\begin{align}
&f(a, b)f(c,d)\notag \\
=&\sum^{k-1}_{r=0}{a}^{\frac{r^2+r}{2}}{b}^{\frac{r^2-r}{2}}f(a^{\frac{b_{11}^2+b_{11}}{2}+b_{11}r}b^{\frac{b_{11}^2-b_{11}}{2}+b_{11}r}
c^{\frac{b_{21}^2+b_{21}}{2}}d^{\frac{b_{21}^2-b_{21}}{2}},
a^{\frac{b_{11}^2-b_{11}}{2}-b_{11}r}b^{\frac{b_{11}^2+b_{11}}{2}-b_{11}r}
c^{\frac{b_{21}^2-b_{21}}{2}}d^{\frac{b_{21}^2+b_{21}}{2}})\notag \\
\label{2genthmc} &\times
f(a^{\frac{b_{12}^2+b_{12}}{2}+b_{12}r}b^{\frac{b_{12}^2-b_{12}}{2}+b_{12}r}
c^{\frac{b_{22}^2+b_{22}}{2}}d^{\frac{b_{22}^2-b_{22}}{2}},
a^{\frac{b_{12}^2-b_{12}}{2}-b_{12}r}b^{\frac{b_{12}^2+b_{12}}{2}-b_{12}r}
c^{\frac{b_{22}^2-b_{22}}{2}}d^{\frac{b_{22}^2+b_{22}}{2}}).
\end{align}
\end{thm}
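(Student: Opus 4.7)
The plan is to deduce Theorem \ref{2 main theorem} as a direct specialization of Theorem \ref{main theorem} to the case $n=2$. Once that reduction is set up, the only substantive thing to check is that the coprimality hypothesis stated here, namely $\gcd(b_{21},b_{11}b_{22})=1$ or $\gcd(b_{22},b_{12}b_{21})=1$, is exactly what is required to invoke the general theorem with the choice $j=1$.

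For a $2\times2$ matrix the adjugate is
\[
B^{*}=\begin{pmatrix} b_{22} & -b_{12}\\ -b_{21} & b_{11}\end{pmatrix},
\]
so the first column of $B^{*}$ has entries $b_{22}$ and $-b_{21}$. Using $k=\det B=b_{11}b_{22}-b_{12}b_{21}$, the elementary identities $\gcd(b_{22},k)=\gcd(b_{22},b_{12}b_{21})$ and $\gcd(b_{21},k)=\gcd(b_{21},b_{11}b_{22})$ show that under either of the two stated hypotheses at least one entry of the first column of $B^{*}$ is coprime to $k$. Hence the coprimality condition required by Theorem \ref{main theorem} is met, and we may take $j=1$.

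Next I would substitute $n=2$, $j=1$, $(a_1,b_1)=(a,b)$, and $(a_2,b_2)=(c,d)$ into \eqref{genthm}. The scalar prefactor ${a_j}^{(r^2+r)/2}{b_j}^{(r^2-r)/2}$ becomes $a^{(r^2+r)/2}b^{(r^2-r)/2}$. Because $j=1$, only the first-row entries $b_{11},b_{12}$ acquire the shift $\pm b_{1\ell}r$ in the exponents of $a$ and $b$, while the second-row entries $b_{21},b_{22}$, which sit in the exponents of $c$ and $d$, pick up no shift. Reading off the two resulting theta factors column by column then yields \eqref{2genthmc} verbatim, and the orthogonality relation $l_1b_{11}b_{12}+l_2b_{21}b_{22}=0$ is precisely the $n=2$ instance of \eqref{orthogonal} needed to separate the $y_1,y_2$ sums into a product.

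The entire argument is therefore little more than bookkeeping on the subscripts of \eqref{genthm}. The one genuinely non-cosmetic step is the coprimality translation in the first paragraph, and the chief risk in writing it out is placing the $b_{1\ell}r$ shifts on the correct $a$- and $b$-exponents rather than mistakenly attaching them to $c,d$; no new analytic or combinatorial input beyond Theorem \ref{main theorem} is required.
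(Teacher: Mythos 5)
Your proposal is correct and is essentially the paper's own proof: the paper likewise deduces Theorem \ref{2 main theorem} from Theorem \ref{main theorem} by observing that the stated gcd hypotheses are equivalent to $\gcd(b^*_{11},k)=1$ or $\gcd(b^*_{21},k)=1$, i.e.\ that some entry of the first column of the adjugate is coprime to $k$, and then specializing \eqref{genthm} with $n=2$, $j=1$. Your write-up merely makes the adjugate computation and the gcd identities explicit, which the paper leaves implicit.
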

\begin{proof}
If $\gcd(b_{21}, b_{11}b_{22})=1$ or $\gcd(b_{22}, b_{12}b_{21})=1$,
then either $\gcd(b^*_{11}, k)=1$ or $\gcd(b^*_{21}, k)=1$. The
conditions of Theorem \ref{main theorem} are met.
\end{proof}
If $\gcd(b_{11}, b_{12}b_{21})=1$ or $\gcd(b_{12}, b_{11}b_{22})=1$,
then we have the following identity corresponding to
\eqref{2genthmc}
\begin{align}
&f(a, b)f(c,d)\notag \\
=&\sum^{k-1}_{r=0}{c}^{\frac{r^2+r}{2}}{d}^{\frac{r^2-r}{2}}f(a^{\frac{b_{11}^2+b_{11}}{2}}b^{\frac{b_{11}^2-b_{11}}{2}}
c^{\frac{b_{21}^2+b_{21}}{2}+b_{21}r}d^{\frac{b_{21}^2-b_{21}}{2}+b_{21}r},
a^{\frac{b_{11}^2-b_{11}}{2}}b^{\frac{b_{11}^2+b_{11}}{2}}
c^{\frac{b_{21}^2-b_{21}}{2}-b_{21}r}d^{\frac{b_{21}^2+b_{21}}{2}-b_{21}r})\notag \\
\label{2genthm} &\times
f(a^{\frac{b_{12}^2+b_{12}}{2}}b^{\frac{b_{12}^2-b_{12}}{2}}
c^{\frac{b_{22}^2+b_{22}}{2}+b_{22}r}d^{\frac{b_{22}^2-b_{22}}{2}+b_{22}r},
a^{\frac{b_{12}^2-b_{12}}{2}}b^{\frac{b_{12}^2+b_{12}}{2}}
c^{\frac{b_{22}^2-b_{22}}{2}-b_{22}r}d^{\frac{b_{22}^2+b_{22}}{2}-b_{22}r}).
\end{align}

If $b_{11}$ and $b_{21}$, $b_{12}$ and $b_{22}$ are of the same
parity, then we have the following corollary.
\begin{cor}
In Theorem \ref{2 main theorem}, if we further require that $b_{11}$
and $b_{21}$, $b_{12}$ and $b_{22}$ are of the same parity, then
\begin{align}
&f(a, b)f(c,d)+f(-a, -b)f(-c, -d)\\
=&2\sum^{\frac{k-2}{2}}_{r=0}{a}^{2r^2+r}{b}^{2r^2-r}\notag \\
&
f\Big(a^{\frac{b_{11}^2+b_{11}}{2}+2b_{11}r}b^{\frac{b_{11}^2-b_{11}}{2}+2b_{11}r}
c^{\frac{b_{21}^2+b_{21}}{2}}d^{\frac{b_{21}^2-b_{21}}{2}},
a^{\frac{b_{11}^2-b_{11}}{2}-2b_{11}r}b^{\frac{b_{11}^2+b_{11}}{2}-2b_{11}r}
c^{\frac{b_{21}^2-b_{21}}{2}}d^{\frac{b_{21}^2+b_{21}}{2}}\Big)\notag \\
\label{2genthmcc} &\times
f\Big(a^{\frac{b_{12}^2+b_{12}}{2}+2b_{12}r}b^{\frac{b_{12}^2-b_{12}}{2}+2b_{12}r}
c^{\frac{b_{22}^2+b_{22}}{2}}d^{\frac{b_{22}^2-b_{22}}{2}},
a^{\frac{b_{12}^2-b_{12}}{2}-2b_{12}r}b^{\frac{b_{12}^2+b_{12}}{2}-2b_{12}r}
c^{\frac{b_{22}^2-b_{22}}{2}}d^{\frac{b_{22}^2+b_{22}}{2}}\Big).
\end{align}
\begin{align}
&f(a, b)f(c,d)-f(-a, -b)f(-c, -d)\notag \\
=2&\sum^{\frac{k-2}{2}}_{r=0}{a}^{2r^2+3r+1}{b}^{2r^2+r}\notag \\
&f\Big(a^{\frac{b_{11}^2+3b_{11}}{2}+2b_{11}r}b^{\frac{b_{11}^2+b_{11}}{2}+2b_{11}r}
c^{\frac{b_{21}^2+b_{21}}{2}}d^{\frac{b_{21}^2-b_{21}}{2}},
a^{\frac{b_{11}^2-3b_{11}}{2}-2b_{11}r}b^{\frac{b_{11}^2-b_{11}}{2}-2b_{11}r}
c^{\frac{b_{21}^2-b_{21}}{2}}d^{\frac{b_{21}^2+b_{21}}{2}}\Big)\notag \\
\label{2genthmccc} &\times
f\Big(a^{\frac{b_{12}^2+3b_{12}}{2}+2b_{12}r}b^{\frac{b_{12}^2+b_{12}}{2}+2b_{12}r}
c^{\frac{b_{22}^2+b_{22}}{2}}d^{\frac{b_{22}^2-b_{22}}{2}},
a^{\frac{b_{12}^2-3b_{12}}{2}-2b_{12}r}b^{\frac{b_{12}^2-b_{12}}{2}-2b_{12}r}
c^{\frac{b_{22}^2-b_{22}}{2}}d^{\frac{b_{22}^2+b_{22}}{2}}\Big).
\end{align}
\end{cor}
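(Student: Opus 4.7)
The plan is to derive both identities from Theorem \ref{2 main theorem} by combining \eqref{2genthmc} with the same identity after the substitution $a\mapsto -a$, $b\mapsto -b$, $c\mapsto -c$, $d\mapsto -d$, in exact analogy with the proof of the preceding $n$-dimensional corollary. The parity hypothesis is designed so that this substitution fixes the inner theta-function factors on the right-hand side of \eqref{2genthmc}, leaving only the prefactor $a^{(r^2+r)/2}b^{(r^2-r)/2}$ to carry a sign. Adding and subtracting the two identities will then keep only the even or only the odd residues $r$, which I will reindex by $r=2s$ or $r=2s+1$.

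The crucial step will be a careful sign computation. Under the substitution, each of the four arguments of the two inner theta functions in \eqref{2genthmc} picks up a sign $(-1)^{E}$, where $E$ is the sum of the four exponents on $a,b,c,d$ inside that argument. For the arguments of the first inner $f$, one checks that $E\equiv b_{11}^2+b_{21}^2\pmod 2$ because the two $r$-dependent contributions $b_{11}r$ from the $a$- and $b$-exponents add up to $2b_{11}r\equiv 0\pmod 2$; using $n^2\equiv n\pmod 2$ this reduces to $b_{11}+b_{21}\pmod 2$, which vanishes by hypothesis. The same computation for the second inner $f$ gives $E\equiv b_{12}+b_{22}\equiv 0\pmod 2$, so the inner theta factors are genuinely unchanged. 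Meanwhile the prefactor transforms by $(-1)^{(r^2+r)/2+(r^2-r)/2}=(-1)^{r^2}=(-1)^r$. The parity hypothesis also forces $k=b_{11}b_{22}-b_{12}b_{21}$ to be even in all four parity cases, which is what permits the reindexing to terminate cleanly at $(k-2)/2$.

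With the sign analysis in hand, everything else is algebraic bookkeeping. Adding \eqref{2genthmc} to the substituted copy kills all odd $r$ and doubles the even terms; substituting $r=2s$, $0\le s\le (k-2)/2$, and renaming $s$ back to $r$ converts the prefactor $a^{(r^2+r)/2}b^{(r^2-r)/2}$ into $a^{2r^2+r}b^{2r^2-r}$ and each $r$-shift $b_{ij}r$ inside the inner exponents into $2b_{ij}r$, producing \eqref{genthmsum1}. Subtracting and substituting $r=2s+1$ uses the routine identities $\tfrac{(2s+1)^2+(2s+1)}{2}=2s^2+3s+1$, $\tfrac{(2s+1)^2-(2s+1)}{2}=2s^2+s$, and $\tfrac{b_{ij}^2+b_{ij}}{2}+b_{ij}(2s+1)=\tfrac{b_{ij}^2+3b_{ij}}{2}+2b_{ij}s$ to reconcile the new exponents with those displayed in \eqref{genthmsum2}. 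The only nontrivial ingredient is the sign computation in the second paragraph; indeed, this corollary is essentially the specialization $n=2$ of the preceding one, with the simpler ``same parity in each column'' formulation making the sign analysis transparent.
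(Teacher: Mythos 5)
Your proposal is correct and follows essentially the same route as the paper: the paper establishes the general $n$-variable version of this corollary by replacing $a_i$ with $-a_i$ and $b_i$ with $-b_i$ in the main theorem, adding (or subtracting) the result, and reindexing $2r$ as $r$, and the present statement is just the $n=2$ specialization of that argument. Your sign computation showing the inner theta factors are fixed (via $b_{11}+b_{21}\equiv b_{12}+b_{22}\equiv 0\pmod 2$, which also forces $k$ even) is exactly the content of the paper's observation that each column sum being even makes $k$ and each $b_{1j}^2+\cdots+b_{nj}^2$ even.
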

Without lose of generality, it is easy to see that the simplest
nontrivial 2 by 2 ``generalized orthogonal" matrix is
 \[ B=\left(
\begin{array}{rr}
1 & 1\\
-1 & 1
\end{array}
\right),
\]
with $\det B=2$. So $k \geqslant 2$ for 2 by 2 matrices throughout
the remainder of this paper.
\begin{cor} \label{k1k2cor}
If $|ab|<1$ and $(cd)=(ab)^{k_1k_2}$, where both $k_1$ and $k_2$ are
positive integers, then
\begin{align}
f(a,b)f(c,d)=&\sum_{r=0}^{k_{1}+k_{2}-1}a^{\frac{r^2+r}{2}}b^{\frac{r^2-r}{2}}f\Big(a^{\frac{k_1^2+k_1}{2}+k_1r}b^{\frac{k_1^2-k_1}{2}+k_1r}d,
a^{\frac{k_1^2-k_1}{2}-k_1r}b^{\frac{k_1^2+k_1}{2}-k_1r}c\Big)\notag \\
\label{k1k2} &\times
f\Big(a^{\frac{k_2^2+k_2}{2}+k_2r}b^{\frac{k_2^2-k_2}{2}+k_2r}c,
a^{\frac{k_2^2-k_2}{2}-k_2r}b^{\frac{k_2^2+k_2}{2}-k_2r}d\Big).
\end{align}
\end{cor}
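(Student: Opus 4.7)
The plan is to reduce Corollary \ref{k1k2cor} to a direct application of Theorem \ref{2 main theorem} (specifically formula \eqref{2genthmc}) by exhibiting an appropriate $2 \times 2$ ``generalized orthogonal'' matrix $B$. Since $|ab| < 1$, I can normalize the base variable by setting $q = ab$, so that $l_1 = 1$ and $cd = q^{k_1 k_2}$, i.e.\ $l_2 = k_1 k_2$. The orthogonality condition \eqref{orthogonal} in Theorem \ref{2 main theorem} then reduces to $b_{11} b_{12} + k_1 k_2\, b_{21} b_{22} = 0$.

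I would take
\[
B = \begin{pmatrix} k_1 & k_2 \\ -1 & 1 \end{pmatrix}.
\]
A quick check gives $b_{11} b_{12} + k_1 k_2\, b_{21} b_{22} = k_1 k_2 - k_1 k_2 = 0$, so the orthogonality condition is satisfied. Moreover $\det B = k_1 \cdot 1 - k_2 \cdot (-1) = k_1 + k_2 > 0$, which identifies the parameter $k$ in the theorem as $k_1 + k_2$ and matches the summation range in \eqref{k1k2}. Finally, since $b_{22} = 1$ we have $\gcd(b_{22}, b_{12} b_{21}) = 1$, which is the coprimality hypothesis required by Theorem \ref{2 main theorem}.

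With $B$ fixed, the proof becomes a routine substitution. The entries $b_{21} = -1$, $b_{22} = 1$ make the $c,d$ exponents collapse: $(b_{21}^2 + b_{21})/2 = 0$, $(b_{21}^2 - b_{21})/2 = 1$ give the factor $d$ in the first theta function, while $(b_{22}^2 + b_{22})/2 = 1$, $(b_{22}^2 - b_{22})/2 = 0$ give $c$ in the second, exactly as written in \eqref{k1k2}. The $a,b$ exponents are just those produced by $b_{11} = k_1$ and $b_{12} = k_2$, which are in the form $\tfrac{k_i^2 \pm k_i}{2} \pm k_i r$ appearing in \eqref{k1k2}. Plugging these into \eqref{2genthmc} yields the stated identity.

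There is no genuine obstacle here; the only non-mechanical step is recognizing the matrix $B$, and that is forced by the orthogonality equation together with the requirement $\det B = k_1 + k_2$. Everything else is bookkeeping, and the covering-system argument that guarantees no overlap is already absorbed into Theorem \ref{main theorem} (and hence Theorem \ref{2 main theorem}) via the coprimality condition verified above.
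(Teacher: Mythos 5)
Your proposal is correct and is essentially identical to the paper's own proof, which simply chooses $B=\left(\begin{smallmatrix} k_1 & k_2\\ -1 & 1\end{smallmatrix}\right)$ in \eqref{2genthmc}; you have additionally spelled out the verification of the orthogonality, determinant, and coprimality conditions, which the paper leaves implicit. Nothing further is needed.
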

\noindent It is easy too see that \eqref{k1k2} is symmetric with
respect to $a$ and $b$, $c$ and $d$, and $k_1$ and $k_2$.
\begin{proof} Choose
\[
B=\left(
\begin{array}{rr}
k_1 & k_2\\
-1 & 1
\end{array}
\right)
\]
in  \eqref{2genthmc}.
\end{proof}
Many identities involving products of two theta functions are
special cases of Corollary \ref{k1k2cor}. We give a list of
identities here.
\begin{cor}
\begin{align}
\label{phiphi}
\varphi(q)\varphi(q^m)&=\sum_{r=0}^{m}q^{r^2}f(q^{m+1+2r},
q^{m+1-2r})f(q^{m^2+m+2mr},
q^{m^2+m-2mr}), \\
\label{phiphi2}
\varphi(q)\varphi(q^{m^2})&=\sum_{r=0}^{2m-1}q^{r^2}f^2(q^{2m^2+2mr},
q^{2m^2-2mr}),\\
\psi(q)\varphi(q^{2m^2})&=\sum_{r=0}^{2m-1}q^{2r^2-r}f^2(q^{4m^2-m+4mr},
q^{4m^2+m-4mr}),\\
2\psi(q)\psi(q^{4m^2})&=\sum_{r=0}^{2m-1}q^{2r^2-r}f(q^{6m^2-m+4mr},
q^{2m^2+m-4mr})f(q^{2m^2-m+4mr}, q^{6m^2+m-4mr}),\\
2\psi(q)\varphi(q^m)&=\sum_{r=0}^{2m}q^{\frac{r^2-r}{2}}f(q^{m+r},
q^{m+1-r})f(q^{2m^2+mr}, q^{2m^2+2m-mr}),\\
\psi(q)\varphi(q^{2m})&=\sum_{r=0}^{m}q^{2r^2-r}f(q^{2m+1+4r},
q^{2m+3-4r})f(q^{2m^2+m+4mr}, q^{2m^2+3m-4mr}),\\
2\varphi(q)\psi(q^{2m})&=\sum_{r=0}^{m}q^{r^2}f(q^{2m+1+2r},
q^{1-2r})f(q^{m^2+2mr}, q^{m^2+2m-2mr}),\\
\psi(q)\psi(q^m)&=\sum_{r=0}^{m}q^{2r^2-r}f(q^{3m+1+4r},
q^{m+3-4r})f(q^{2m^2+4mr}, q^{2m^2+4m-4mr}).
\end{align}
\end{cor}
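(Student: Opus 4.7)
The plan is to derive each of the eight identities as a specialization of Corollary \ref{k1k2cor}. Every entry has the form $X(q)\cdot Y(q^N) = \sum_{r} q^{\alpha(r)} f(\cdots)f(\cdots)$ where $X,Y \in \{\varphi, \psi\}$, so for each one I will pick $(a,b,c,d)$ using the representations $\varphi(q^n) = f(q^n,q^n)$ and $\psi(q^n) = f(q^n,q^{3n})$, then choose $(k_1,k_2)$ so that the condition $cd = (ab)^{k_1 k_2}$ from Corollary \ref{k1k2cor} holds and the upper summation index $k_1+k_2-1$ matches.

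For instance, for \eqref{phiphi} I would take $a=b=q$ and $c=d=q^m$, so $ab=q^2$ and $cd=q^{2m}=(ab)^m$; this forces $k_1 k_2 = m$ and $k_1+k_2-1 = m$, giving $(k_1,k_2)=(1,m)$. Substituting into \eqref{k1k2}, the coefficient $a^{(r^2+r)/2}b^{(r^2-r)/2}$ collapses to $q^{r^2}$, the first theta factor (with $k_1=1$) becomes $f(q^{m+1+2r},q^{m+1-2r})$, and the second (with $k_2=m$) becomes $f(q^{m^2+m+2mr},q^{m^2+m-2mr})$, matching the right-hand side. The remaining identities without a factor of $2$ in front — namely \eqref{phiphi2} with $(a,b,c,d,k_1,k_2)=(q,q,q^{m^2},q^{m^2},m,m)$, the $\psi(q)\varphi(q^{2m^2})$ identity with $(q,q^3,q^{2m^2},q^{2m^2},m,m)$, and the $\psi(q)\psi(q^m)$ and $\psi(q)\varphi(q^{2m})$ identities with analogous selections — are obtained the same way, and in each case the quadratic-in-$r$ term in the coefficient is automatically produced by $a^{(r^2+r)/2}b^{(r^2-r)/2}$.

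To handle the identities carrying a factor of $2$ on the left-hand side, such as $2\psi(q)\psi(q^{4m^2})$, I would exploit the relation $f(1,a) = 2f(a,a^3)$ so that $\psi(q^{2n}) = \tfrac{1}{2}f(1,q^{2n})$. Concretely, taking $a=q$, $b=q^3$, $c=1$, $d=q^{4m^2}$ gives $f(a,b)f(c,d) = \psi(q)\cdot 2\psi(q^{4m^2})$, while $ab=q^4$, $cd=q^{4m^2} = (ab)^{m^2}$ yields $k_1=k_2=m$, which matches the summation range $0 \leq r \leq 2m-1$; a direct computation then produces the stated two theta factors. The $2\psi(q)\varphi(q^m)$ and $2\varphi(q)\psi(q^{2m})$ identities arise by similar rewrites, with the factor $2$ absorbed into one of the two theta functions whose first entry is $1$.

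The main obstacle is purely computational bookkeeping: for each identity I must correctly expand $a^{(k_i^2+k_i)/2 + k_i r}b^{(k_i^2-k_i)/2 + k_i r}\cdot\{c \text{ or } d\}$ and the analogous second argument, collect the constant and linear-in-$r$ parts of the resulting exponent of $q$, and confirm the match with the claimed right-hand side. Once the correct $(a,b,c,d,k_1,k_2)$ is identified, the verification is mechanical; the only real choice is whether to represent the left-hand theta factors as $f(q^n,q^n)$, $f(q^n,q^{3n})$, or $f(1,q^{2n})/2$, and this choice is forced by matching the coefficient $a^{(r^2+r)/2}b^{(r^2-r)/2}$ and the summation range simultaneously.
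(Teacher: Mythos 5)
Your proposal is correct and is exactly the route the paper takes: the list is introduced with the remark that all eight identities are special cases of Corollary \ref{k1k2cor}, and your parameter choices (e.g.\ $(a,b,c,d,k_1,k_2)=(q,q,q^m,q^m,1,m)$ for \eqref{phiphi}, $(q,q^3,q^{2m^2},q^{2m^2},m,m)$ for the third entry, and taking $c=1$ with $f(1,x)=2f(x,x^3)=2\psi(x)$ to absorb the leading factors of $2$) are the intended specializations. One remark from carrying out the mechanical check you describe: for the entry $2\psi(q)\varphi(q^m)$ the substitution $(a,b,c,d,k_1,k_2)=(1,q,q^m,q^m,1,2m)$ forced by the coefficient $q^{(r^2-r)/2}$ and the range $0\le r\le 2m$ produces $f(q^{2m^2+2mr},q^{2m^2+2m-2mr})$ in the second factor rather than the printed $f(q^{2m^2+mr},q^{2m^2+2m-mr})$ (already at $m=1$ the printed version fails at the coefficient of $q$), so that line appears to carry a typographical slip in the exponent rather than a defect in your argument.
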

If $k_1k_2$ is a composite number, then the right-hand side of
\eqref{k1k2} is not unique, for it depends on which $k_1$ and $k_2$
we choose. For example, if we choose $m=4$ in \eqref{phiphi}, we
have
\[\varphi(q)\varphi(q^4)=\varphi(q^5)\phi(q^{20})+2qf(q^3,
q^7)f(q^{12}, q^{28})+2q^4f(q, q^9)f(q^{4}, q^{36}).\]
If we choose
$m=2$ in \eqref{phiphi2}, we have a different representation of
$\varphi(q)\varphi(q^4)$
\begin{align*}
\varphi(q)\varphi(q^4)=\varphi^2(q^8)+2q\psi^2(q^4)+4q^4\psi^2(q^{16}).
\end{align*}

The G\"{o}llnitz-Gordon functions are defined by
\begin{align*}
S(q):=&\sum_{n=0}^{\infty}\frac{(-q; q^2)_n}{(q^2;
q^2)_n}q^{n^2}=\frac{1}{(q; q^8)_\infty(q^4; q^8)_\infty(q^7;
q^8)_\infty},\notag \\
T(q):=&\sum_{n=0}^{\infty}\frac{(-q; q^2)_n}{(q^2;
q^2)_n}q^{n^2+n}=\frac{1}{(q^3; q^8)_\infty(q^4; q^8)_\infty(q^5;
q^8)_\infty}.
\end{align*}

Huang \cite{huang3}, S. L. Chen and S. S. Huang \cite {huang2} found
a list of modular relations for the G\"{o}llnitz-Gordon functions
which are analogous to Ramanujan's forty identities for the
Rogers-Ramanujan functions. Some of their identities can be shown as
special cases of our Theorem \ref{2 main theorem} and we give proofs
to four of their identities in this paper.
\begin{cor}
Let $f_n=f(-q^n)$ for nonnegative integer $n$.
\begin{align}
\label{huangb}
S(q^7)T(q)-q^3S(q)T(q^7)&=1,\\
\label{huangb2}
S(q^3)S(q)+q^2T(q^3)T(q)&=\frac{f_3f_4}{f_1f_{12}},\\
\label{huangb3}
S(q)S(q)+qT(q)T(q)&=\frac{f^6_2}{f^3_1f^3_{4}},\\
\label{huangb4} S(q^3)T(q)-qS(q)T(q^3)&=\frac{f_1f_{12}}{f_3f_{4}}.
\end{align}
\end{cor}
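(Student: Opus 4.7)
The plan is to reduce each of \eqref{huangb}--\eqref{huangb4} to a pure theta-function identity and then derive it as an application of Theorem~\ref{2 main theorem}. First, Jacobi's triple product identity yields
\[
S(q^m)=\frac{f_{8m}^{\,2}}{f_{4m}\,f(-q^m,-q^{7m})},\qquad T(q^m)=\frac{f_{8m}^{\,2}}{f_{4m}\,f(-q^{3m},-q^{5m})},
\]
for $m\in\{1,3,7\}$. Substituting these into each of the four identities and clearing the surviving $f_n=(q^n;q^n)_\infty$ factors, each identity becomes equivalent to an identity between linear combinations of products of two theta functions $f(\alpha,\beta)\,f(\gamma,\delta)$ with $\alpha\beta=q^{l_1}$ and $\gamma\delta=q^{l_2}$ for a fixed pair $(l_1,l_2)$ throughout: namely $(8,8)$ for \eqref{huangb3}, $(8,24)$ for \eqref{huangb2} and \eqref{huangb4}, and $(8,56)$ for \eqref{huangb}.

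Next, for each such $(l_1,l_2)$ I select a $2\times 2$ integer matrix $B=(b_{ij})$ satisfying the generalized orthogonality relation $l_1b_{11}b_{12}+l_2b_{21}b_{22}=0$ together with the gcd hypothesis of Theorem~\ref{2 main theorem}. Natural minimal choices are
\[
B=\begin{pmatrix}1 & 1\\ -1 & 1\end{pmatrix}\ (k=2),\qquad B=\begin{pmatrix}3 & -1\\ 1 & 1\end{pmatrix}\ (k=4),\qquad B=\begin{pmatrix}7 & -1\\ 1 & 1\end{pmatrix}\ (k=8),
\]
respectively. With $a,b,c,d$ chosen to be the appropriate negative powers of $q$ dictated by the reduced identity, applying Theorem~\ref{2 main theorem} expresses one of the two-theta-function products as a sum of $k$ products of two theta functions whose exponents are controlled by $B$ and the summation index $r\in\{0,1,\ldots,k-1\}$.

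Finally I match these $k$ summands against the two summands of the reduced left-hand side. For \eqref{huangb3} with $k=2$, the two resulting terms, after invoking $f(q^8,q^8)=\varphi(q^8)$ and $f(1,q^{16})=2f(q^{16},q^{48})$, assemble into a factorization of the form $[\varphi(q^8)-2q^2f(q^{16},q^{48})]\,[f(q^6,q^{10})+qf(q^2,q^{14})]=\varphi(-q^2)\,\psi(q)$, which is immediate from the series definitions and yields \eqref{huangb3}. For \eqref{huangb2} and \eqref{huangb4}, two of the four $r$-terms should vanish or combine to give the required two summands; for \eqref{huangb} the same phenomenon should occur with six of the eight $r$-terms.

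The main obstacle is this last bookkeeping step. Since the exponents in the $r$-indexed arguments are quadratic in $r$, one must verify carefully which $r$'s produce surviving terms, which yield a vanishing factor $f(-1,\cdot)=0$ by \eqref{f-1a}, and which pair up via the functional equation \eqref{aa} to reproduce the target summands on the left. The case \eqref{huangb} with $k=8$ is expected to be the most delicate, whereas \eqref{huangb3} is essentially a Schr\"{o}ter-type specialization.
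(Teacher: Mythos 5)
Your overall strategy---rewrite $S$ and $T$ as eta-quotients times reciprocals of theta functions, reduce each identity to a two-term theta-product identity, and then produce that identity from Theorem \ref{2 main theorem} with a small generalized-orthogonal matrix---is the same as the paper's, and your matrices are column-permutation/sign equivalents of the ones the paper uses (the paper works with $\bigl(\begin{smallmatrix}k_1 & k_2\\ -1 & 1\end{smallmatrix}\bigr)$ via Corollary \ref{k1k2cor}, applied to the \emph{small} product such as $f(1,q^2)f(-q^3,-q^3)=2\psi(q^2)\varphi(-q^3)$ rather than to the $(8,24)$-products; since the orthogonality relation is homogeneous in $(l_1,l_2)$ this is the same matrix, but decomposing the large-$l$ products as you literally describe sends you to $l$-values like $(96,32)$ and does not terminate). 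Your treatment of \eqref{huangb3} is correct, though more roundabout than necessary: the paper obtains \eqref{huang3} in one step by decomposing $f(q,q^3)f(-q^2,-q^2)$ with $k=2$, which directly yields $f^2(-q^3,-q^5)+qf^2(-q,-q^7)$. For \eqref{huangb2} and \eqref{huangb4} your expectation is borne out: the four $r$-terms of the $k=4$ expansion pair up under \eqref{aa} into exactly the two required summands.

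The gap is in \eqref{huangb}. The $k=8$ expansion of $4\psi(q)\psi(q^7)=f(1,q)f(1,q^7)$ does \emph{not} collapse to two terms: after applying \eqref{aa} the eight $r$-terms combine only down to \emph{five} distinct products, namely \eqref{psiqpsiq71},
\begin{align*}
2\psi(q)\psi(q^7)=\;&f(q,q^7)f(q^{21},q^{35})+q^3f(q^3,q^5)f(q^7,q^{49})\\
&+q\psi(q^2)\psi(q^{14})+\psi(q^8)\varphi(q^{28})+q^6\varphi(q^4)\psi(q^{56}),
\end{align*}
and none of them vanishes, so your claim that ``six of the eight $r$-terms'' disappear is false as stated. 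Your plan provides no mechanism for isolating the two-term subsum $f(q,q^7)f(q^{21},q^{35})+q^3f(q^3,q^5)f(q^7,q^{49})$ that is actually needed. The missing idea is to run the same expansion a second time at $(a,b,c,d)=(-1,-q,-1,-q^7)$: since $f(-1,-q)=0$ by \eqref{f-1a}, one obtains the vanishing alternating sum \eqref{psipsi0}, which says precisely that the even-$r$ group equals the odd-$r$ group. Combined with the display above, this splits the five terms into \eqref{deg7} and \eqref{huangid1}, each equal to $\psi(q)\psi(q^7)$, and \eqref{huangid1} is the identity equivalent to \eqref{huangb}. Without this second evaluation, or some equivalent device, your argument for \eqref{huangb} does not close.
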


Of Huang's identities, the most beautiful one is \eqref{huangb}. It is an analogue of Ramanujan's ``most beautiful'' identity of the forty identities
\[H(q)G(q^{11}) - q^2G(q)H(q^{11}) = 1,  \]
where
\[G(q): =\sum_{n=0}^{\infty}\frac{q^{n^2}}{(q; q)_n}=\frac{1}{(q; q^5)_\infty(q^4; q^5)_\infty}\quad \hbox{and} \quad H(q): =\sum_{n=0}^{\infty}\frac{q^{n(n+1)}}{(q; q)_n}=\frac{1}{(q^2; q^5)_\infty(q^3; q^5)_\infty}.\]
\begin{proof}If we choose $a=1$, $b=q$, $c=1$, $d=q^{m}$, $k_1=1$ and $k_2=m$ in
\eqref{k1k2}, then
\begin{align} \label{4psipsi}
f(1,q)f(1,
q^m)=4\psi(q)\psi(q^m)=\sum_{r=0}^{m}q^{\frac{r^2-r}{2}}f(q^{m+r},
q^{1-r})f(q^{\frac{m^2-m}{2}+mr}, q^{\frac{m^2+3m}{2}-mr}).
\end{align}
We let $m=7$ in \eqref{4psipsi}, then apply \eqref{aa} to find that
\begin{align}
2\psi(q)\psi(q^7)=&f(q, q^7)f(q^{21}, q^{35})+q^3f(q^{3},
q^{5})f(q^{7}, q^{49})\notag \\
\label{psiqpsiq71}
&+q\psi(q^2)\psi(q^{14})+\psi(q^8)\varphi(q^{28})+q^{6}\varphi(q^4)\psi(q^{56}).
\end{align}

If we choose $a=-1$, $b=-q$, $c=-1$, $d=-q^{m}$, $k_1=1$ and $k_2=m$
in \eqref{k1k2}, by \eqref{f-1a}, $f(-1, -q)=0$. We obtain
\begin{align} \label{psipsi0}
\sum_{r=0}^{m}(-1)^rq^{\frac{r^2-r}{2}}f(q^{m+r},
q^{1-r})f(q^{\frac{m^2-m}{2}+mr}, q^{\frac{m^2+3m}{2}-mr})=0.
\end{align}
\noindent Letting $m=7$ in \eqref{psipsi0}, we have
\begin{align*}
\sum_{i=0}^{7}(-1)^rq^{\frac{r^2-r}{2}}f(q^{7+r},
q^{1-r})f(q^{21+7r}, q^{35-7r})=0.
\end{align*}
We can deduce
\begin{align} \label{psiqpsiq72}
f(q, q^7)f(q^{21}, q^{35})+q^3f(q^{3}, q^{5})f(q^{7},
q^{49})=\psi(q^8)\varphi(q^{28})+q^{6}\varphi(q^4)\psi(q^{56})+q\psi(q^2)\psi(q^{14})
\end{align}
after simplification.

From \eqref{psiqpsiq71} and \eqref{psiqpsiq72}, we obtain
\cite[p.~315, (19.1)]{Ber1}
\begin{align} \label{deg7}
\psi(q)\psi(q^7)=\psi(q^8)\varphi(q^{28})+q^{6}\varphi(q^4)\psi(q^{56})+q\psi(q^2)\psi(q^{14}),
\end{align}
and
\begin{align} \label{huangid1}
\psi(q)\psi(q^7)=f(q, q^7)f(q^{21}, q^{35})+q^3f(q^{3},
q^{5})f(q^{7}, q^{49}).
\end{align}

Identity \eqref{deg7} is equivalent to a modular equation of degree
7, namely,
\[(\alpha\beta)^{1/8}+\{(1-\alpha)(1-\beta)\}^{1/8}=1,\]
where $\beta$ has degree 7 over $\alpha$.

Identity \eqref{huangid1} is equivalent to \eqref{huangb}. This
proof of \eqref{huangid1} is simple compared with its proof in
\cite{huang2}.

Next we sketch the proofs of
\begin{align}
 \label{huang2}
\psi(q^2)\varphi(-q^3)&=f(-q^3, -q^5)f(-q^9, -q^{15})+q^2f(-q,
-q^7)f(-q^3, -q^{21}),\\
\label{huang3} \psi(q)\varphi(-q^2)&=f^2(-q^3, -q^5)+qf^2(-q,
-q^7),\\
\label{huang4} \psi(q^6)\varphi(q)&=f(q, q^7)f(q^9, q^{15})+qf(q^3,
q^5)f(q^3, q^{21}),
\end{align}
which are equivalent to \eqref{huangb2}, \eqref{huangb3}, and
\eqref{huangb4} respectively. In \eqref{k1k2}, choose $a=1$,
$b=q^2$, $c=d=-q^3$, $k_1=1$ and $k_3=3$. We obtain \eqref{huang2}
after simplification. In \eqref{k1k2}, choose $a=q$, $b=q^3$,
$c=d=-q^2$, and $k_1=k_2=1$ to obtain \eqref{huang3}. In
\eqref{k1k2}, choose $a=q$, $b=q$, $c=1$, $d=q^6$, $k_1=1$, $k_2=3$
and obtain \eqref{huang4}.
\end{proof}
In a manner similar to the derivations of \eqref{deg7} and
\eqref{huangid1}, we can obtain a list of identities for products of
$\psi$ functions after simplification.
\begin{cor}
\begin{align*}
\psi(q)\psi(q^3)&=\varphi(q^6)\psi(q^4)+q\varphi(q^2)\psi(q^{12}),
\notag \\
\psi(q)\psi(q^5)&=f(q, q^5)f(q^{10},
q^{20})+q^3\varphi(q^3)\psi(q^{30}),\notag \\
2\psi(q^2)\psi(q^3)&=f(q^2, q^3)f(q^9,
q^{21})+\psi(q^5)\varphi(q^{15})+q^2f(q, q^4)f(q^3, q^{27}),
\notag \\
\psi(q^3)\psi(q^5)&=f(q^3, q^5)f(q^{45}, q^{75})+q^8f(q,
q^7)f(q^{15}, q^{105}),
\notag \\
\psi(q^3)\psi(q^5)&=\psi(q^8)\varphi(
q^{60})+q^3\psi(q^2)\psi(q^{30})+q^{14}\varphi(q^4)\psi(q^{120}),
\notag \\
\psi(q^3)\psi(q^7)&=\psi(q^{10})\varphi( q^{105})+q^3f(q^4,
q^6)f(q^{63}, q^{147})+q^{16}f(q^2, q^8)f(q^{21}, q^{189}),
\notag \\
\psi(q^3)\psi(q^7)&=f(q^3, q^7)f(q^{84}, q^{126})+q^9f(q,
q^9)f(q^{42}, q^{168})+q^{25}\varphi(q^{5})\psi(q^{210}),
\notag \\
\psi(q)\psi(q^9)&=f(q, q^9)f(q^{36}, q^{54})+q^3f(q^3, q^7)f(q^{18},
q^{72})+q^{10}\varphi(q^{5})\psi(q^{90}),
\notag \\
\psi(q)\psi(q^9)&=\psi(q^{10})\varphi(q^{45})+qf(q^2, q^8)f(q^{27},
q^{63})+q^6f(q^4, q^6)f(q^{9}, q^{81}).
\end{align*}
\end{cor}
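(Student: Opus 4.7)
The plan is to derive each identity in the list by the same two-step template used for \eqref{deg7} and \eqref{huangid1}. The first step is to expand the product $4\psi(q^{\alpha})\psi(q^{\beta}) = f(1,q^{\alpha})f(1,q^{\beta})$ using Corollary \ref{k1k2cor} (when $\beta/\alpha$ is a positive integer) or Theorem \ref{2 main theorem} (otherwise); the second step is to invoke the negative-sign auxiliary relation obtained by replacing $(a,b,c,d)$ with $(-a,-b,-c,-d)$ in the same expansion, using $f(-1,-q^{\alpha})=0$ from \eqref{f-1a} to force the left side to zero.

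Concretely, for each target product I apply \eqref{k1k2} with $a=1$, $b=q^{\alpha}$, $c=1$, $d=q^{\beta}$ and positive integers $k_1,k_2$ satisfying $k_1k_2=\beta/\alpha$. This suffices for $\psi(q)\psi(q^3)$ ($k_1=1$, $k_2=3$), $\psi(q)\psi(q^5)$ ($k_1=1$, $k_2=5$), and $\psi(q)\psi(q^9)$ ($k_1=1$, $k_2=9$). For the three products $\psi(q^2)\psi(q^3)$, $\psi(q^3)\psi(q^5)$, and $\psi(q^3)\psi(q^7)$, where $\beta/\alpha$ is not an integer, I invoke Theorem \ref{2 main theorem} directly with a $2\times 2$ integer matrix $B=(b_{ij})$ satisfying the generalized-orthogonality relation $\alpha b_{11}b_{12}+\beta b_{21}b_{22}=0$ and with $\det B$ as small as possible, for example $B=\bigl(\begin{smallmatrix}1 & 5\\ -1 & 3\end{smallmatrix}\bigr)$ for $(\alpha,\beta)=(3,5)$. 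Each summand of the resulting expansion is a product of two general theta functions whose arguments are monomials in $q$; applying $f(a,b)=f(b,a)$, $f(1,a)=2f(a,a^3)$, $\psi(q)=f(q,q^3)$, $\varphi(q)=f(q,q)$, and the transformation \eqref{aa} with an integer shift $n$ chosen summand by summand, each summand rewrites as a power of $q$ times a product of type $\varphi(q^{\mu})\psi(q^{\nu})$, $\psi(q^{\mu})\psi(q^{\nu})$, or $f(q^{\mu},q^{\nu})f(q^{\rho},q^{\sigma})$.

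For the three pairs of listed identities that share a common left side -- the two displayed expressions for $\psi(q^3)\psi(q^5)$, for $\psi(q^3)\psi(q^7)$, and for $\psi(q)\psi(q^9)$ -- I separate the two halves by combining the original expansion with its negative-sign partner: adding yields the sum of the even-$r$ (``$\varphi\psi$-type'') terms as one identity and subtracting yields the odd-$r$ (``$ff$-type'') terms as the other, exactly as in the passage from \eqref{psiqpsiq71} and \eqref{psiqpsiq72} to \eqref{deg7} and \eqref{huangid1}. For the unpaired entries such as $\psi(q)\psi(q^3)=\varphi(q^6)\psi(q^4)+q\varphi(q^2)\psi(q^{12})$, the expansion of $4\psi(q^{\alpha})\psi(q^{\beta})$ already contains one or more summands equal to $\psi(q^{\alpha})\psi(q^{\beta})$ itself (for $\psi(q)\psi(q^3)$, the $r=0$ and $r=2$ summands both collapse to $\psi(q)\psi(q^3)$ after using \eqref{aa}), and these can simply be moved to the left side to finish the derivation without invoking the auxiliary relation.

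The main obstacle is exponent bookkeeping: one must choose the integer shift $n$ in \eqref{aa} correctly for each summand so that $f(q^u,q^v)$ is brought to one of the canonical forms $f(q^m,q^{3m})$, $f(q^m,q^m)$, or $f(q^{m_1},q^{m_2})$ with $m_1,m_2>0$, and for the larger products $\psi(q)\psi(q^9)$ and $\psi(q^3)\psi(q^7)$ one must verify that several apparently distinct summands coincide after applying \eqref{aa}, yielding the correct multiplicities in the collected sum. Once these reductions are carried out, the algebra is mechanical, and no technique beyond those already used in the $\psi(q)\psi(q^7)$ case is required.
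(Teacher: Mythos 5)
Your proposal is correct and takes essentially the same route as the paper, which derives every entry of this list "in a manner similar to the derivations of \eqref{deg7} and \eqref{huangid1}": expand $4\psi(q^{\alpha})\psi(q^{\beta})=f(1,q^{\alpha})f(1,q^{\beta})$ via Corollary \ref{k1k2cor} (or Theorem \ref{2 main theorem} when $\beta/\alpha$ is not an integer), reduce each summand with \eqref{aa}, and combine with the vanishing alternating sum furnished by $f(-1,-q^{\alpha})=0$. Your matrix choices, your observation that some summands reproduce the left-hand side, and your even/odd separation for the paired identities are exactly the mechanics the paper's one-line proof is pointing to.
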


If we choose $k_1=k_2=1$ in \eqref{k1k2}, then we have the following
corollary.
\begin{cor}
For $ab=cd$,
\begin{align} \label{abcd1}
f(a,b)f(c,d)=f(ad,bc)f(ac, bd)+af({c}/{a}, a^2bd)f({d}/{a}, a^2bc).
\end{align}
\end{cor}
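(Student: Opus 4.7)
The plan is to obtain this identity as an immediate specialization of Corollary~\ref{k1k2cor}: I take $k_1 = k_2 = 1$ in \eqref{k1k2}. Under this choice the hypothesis $(cd) = (ab)^{k_1 k_2}$ of that corollary reduces to exactly $ab = cd$, so no extra assumption needs to be verified.

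With $k_1 = k_2 = 1$ the quantities $\frac{k_i^2 + k_i}{2}$ and $\frac{k_i^2 - k_i}{2}$ collapse to $1$ and $0$ respectively, and the summation range $0 \le r \le k_1 + k_2 - 1$ contains only the two values $r = 0$ and $r = 1$. For $r = 0$ the prefactor $a^{(r^2+r)/2} b^{(r^2-r)/2}$ equals $1$ and the two theta factors on the right of \eqref{k1k2} simplify to $f(ad, bc)$ and $f(ac, bd)$. For $r = 1$ the prefactor equals $a$, and the two theta factors simplify to $f(a^2 bd,\, c/a)$ and $f(a^2 bc,\, d/a)$; applying the symmetry $f(x,y) = f(y,x)$ recorded in the introduction then puts these in the shape $f(c/a,\, a^2 bd)$ and $f(d/a,\, a^2 bc)$. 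Summing the $r=0$ and $r=1$ contributions reproduces the claimed right-hand side.

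The whole argument is mechanical substitution, so there is no genuine obstacle; the only care required is in tracking the exponents of $a, b, c, d$ in the four theta arguments produced by the $r = 1$ term. Since every $k_i$-dependent exponent collapses to $0$ or $1$, this bookkeeping is brief, which is why the identity appears as a direct specialization rather than requiring a separate argument.
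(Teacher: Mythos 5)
Your proposal is correct and is exactly the paper's route: the author obtains \eqref{abcd1} by setting $k_1=k_2=1$ in \eqref{k1k2}, and your bookkeeping of the $r=0$ and $r=1$ terms (including the use of $f(x,y)=f(y,x)$ to reorder the arguments) correctly fills in the mechanical details the paper leaves implicit.
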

So for $ab=cd$, we have
\[f(a,b)f(c,d)+f(-a,-b)f(-c,-d)=2f(ad,bc)f(ac, bd).\] This is (i) of
Entry 29 in \cite[p.~45]{Ber1}. Similarly we can deduce (ii) of
Entry 29, which is
\[f(a,b)f(c,d)-f(-a,-b)f(-c,-d)=2af\bigg(\frac{b}{c},
\frac{c}{b}abcd\bigg)f\bigg(\frac{b}{d}, \frac{d}{b}abcd\bigg)\] for
$ab=cd$.

We choose $a=b=c=d=q$ in \eqref{abcd1}, then we have
\begin{align} \label{phi 1}
\varphi^2(q)=\varphi^2(q^2)+4q\psi^2(q^4).
\end{align}
Replacing $q$ with $q^2$ in \eqref {phi 1}, we can obtain
\begin{align*}
\varphi^2(q)=\varphi^2(q^4)+4q\psi^2(q^4)+4q^2\psi^2(q^8).
\end{align*}
Repeating this process, we have the following formula. For each
positive integer $n$,
\begin{align} \label{phi 2g}
\varphi^2(q)=\varphi^2(q^{2^n})+4q\psi^2(q^4)+\dots+4q^{2^{n-1}}\psi^2(q^{2^{n+1}}).
\end{align}

If we take the square of \eqref{phi 2g}, we have the following
formula. For each integer $n\geq 2$,
\begin{align}
{\varphi^4}(q)=\varphi^4(q^{2^n})+8q\psi^4(q^2)+\sum^{n-1}_{k=1}24q^{2^{k}}\psi^2(q^{2^{k+1}})+16q^{2^{n}}\psi^2(q^{2^{n+1}}).
\end{align}

Using \eqref{abcd1}, we can also prove Entry 10 in
\cite[p.~147]{Ber2}, namely
\begin{align*}
\big[f(a,b)-f(a^3b, ab^3)\big]^2=f(a^2, b^2)\varphi(ab)-f^2(a^3b,
ab^3).
\end{align*}
\begin{proof}
From \eqref{entry 311}, we have \[f(a,b)-f(a^3b, ab^3)=af({b}/{a},
a^5b^3).\] So we only need to show
\[f(a^2, b^2)\varphi(ab)=a^2f^2({b}/{a},
a^5b^3)+f^2(a^2b, ab^2),\] which can be deduced directly from
\eqref{abcd1}.
\end{proof}

J. A. Ewell's sextuple product identity in \cite{ewell2} is a
special case of \eqref{abcd1}.
\begin{cor} [Sextuple Product Identity] \label{ewell}
\begin{align*}
&(-xyq;q^2)_\infty(-q/xy; q^2)_\infty(-qx/y; q^2)_\infty(-qy/x;
q^2)_\infty(q^2; q^2)^2_\infty\notag \\
=&\sum^{\infty}_{i=-\infty}q^{2i^2}x^{2i}\sum^{\infty}_{j=-\infty}q^{2j^2}y^{2j}+q\sum^{\infty}_{i=-\infty}q^{2i(i+1)}x^{2i+1}\sum^{\infty}_{j=-\infty}q^{2j(j+1)}y^{2j+1}.
\end{align*}
\end{cor}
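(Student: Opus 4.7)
The plan is to recognize the sextuple product identity as a special case of \eqref{abcd1} for a suitable choice of $a$, $b$, $c$, $d$ satisfying $ab=cd$. By Jacobi's triple product expansion $f(u,v)=(-u;uv)_\infty(-v;uv)_\infty(uv;uv)_\infty$, the left-hand side factors as $f(xyq,q/(xy))\,f(qx/y,qy/x)$, since each $f$ supplies one pair $(-\alpha;q^2)_\infty(-\beta;q^2)_\infty$ together with a factor $(q^2;q^2)_\infty$. I take $a=xyq$, $b=q/(xy)$, $c=qx/y$, $d=qy/x$ and observe that $ab=cd=q^2$, so the balancing hypothesis of \eqref{abcd1} is met.

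The remaining work is bookkeeping of the eight arguments produced by \eqref{abcd1}. For the first product, compute $ad=q^2y^2$, $bc=q^2/y^2$, $ac=q^2x^2$, $bd=q^2/x^2$. The series definition of $f$ then gives
\begin{align*}
f(q^2y^2,q^2/y^2)=\sum_{j=-\infty}^{\infty} q^{2j^2}y^{2j},\qquad f(q^2x^2,q^2/x^2)=\sum_{i=-\infty}^{\infty} q^{2i^2}x^{2i},
\end{align*}
which is precisely the first double sum on the right-hand side of the corollary.

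For the second term of \eqref{abcd1}, the arguments are $c/a=1/y^2$, $a^2bd=y^2q^4$, $d/a=1/x^2$, $a^2bc=x^2q^4$. Expanding from the series gives $f(1/y^2,y^2q^4)=\sum_n y^{-2n}q^{2n(n-1)}$, and the substitution $n\mapsto-n$ rewrites this as $\sum_n q^{2n(n+1)}y^{2n}$; the $x$-analogue is identical. Absorbing the prefactor $a=xyq$ shifts the exponents of $x$ and $y$ by one and extracts a factor $q$, producing the second summand $q\sum_{i=-\infty}^{\infty} q^{2i(i+1)}x^{2i+1}\sum_{j=-\infty}^{\infty} q^{2j(j+1)}y^{2j+1}$, as required.

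No serious obstacle is anticipated. The only subtlety is the reindexing $n\mapsto-n$ needed to convert $q^{2n(n-1)}$ into $q^{2n(n+1)}$ in the correct form; this works because the sum is over all of $\Z$. The decisive structural point is the clean equality $ab=cd=q^2$, which is what makes the sextuple product identity fit directly into the two-term product identity \eqref{abcd1}.
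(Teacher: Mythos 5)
Your proposal is correct and follows exactly the paper's own route: substituting $a=xyq$, $b=q/(xy)$, $c=qx/y$, $d=qy/x$ into \eqref{abcd1} and then applying the Jacobi triple product identity. The paper leaves the bookkeeping of the eight arguments and the reindexing $n\mapsto -n$ as "simplification," and your computations fill in those details correctly.
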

\begin{proof}
In \eqref{abcd1}, replacing $a$ with $xyq$, $b$ with $q/xy$, $c$
with $qx/y$, and $d$ with $qy/x$, and then applying Jacobi's triple
product identity, we complete the proof after simplification.
\end{proof}
Using \eqref{abcd1}, we can also prove the ``three term relation for
Weierstrass sigma functions'' given in \cite[p.~52, Ex.
2.16]{gasper}. Its equivalent forms appeared before in
\cite[p.~47]{Schwarz} and \cite[p.~236]{tannery}.
\begin{cor} \label{sigma}
For nonzero complex numbers $a$, $b$, $c$, $d$,
\begin{align}
&b(ad, q/ad, bc, q/bc, d/a, qa/d, c/b, qb/c; q)_\infty\notag \\
+&c(ab, q/ab, cd, q/cd, b/a, qa/b, d/c, qc/d; q)_\infty\notag \\
+&d(ac, q/ac, bd, q/bd, c/a, qa/c, b/d, qd/b; q)_\infty=0.
\end{align}
\end{cor}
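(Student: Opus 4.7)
The plan is to apply identity \eqref{abcd1} to each natural pair of theta factors in the three terms and show that the resulting summands cancel in pairs. By the Jacobi triple product identity, $\langle x;q\rangle_\infty = f(-x,-q/x)$, so dividing the claimed identity by $(q;q)_\infty^4$ reduces it to the vanishing of
\begin{align*}
&b\,\langle ad;q\rangle_\infty\langle bc;q\rangle_\infty\langle d/a;q\rangle_\infty\langle c/b;q\rangle_\infty + c\,\langle ab;q\rangle_\infty\langle cd;q\rangle_\infty\langle b/a;q\rangle_\infty\langle d/c;q\rangle_\infty \\
&+ d\,\langle ac;q\rangle_\infty\langle bd;q\rangle_\infty\langle c/a;q\rangle_\infty\langle b/d;q\rangle_\infty.
\end{align*}
In each term the four theta factors split naturally into two pairs, one per variable-pair of an associated partition of $\{a,b,c,d\}$; the three terms correspond precisely to the three partitions $\{a,d\}\{b,c\}$, $\{a,b\}\{c,d\}$, $\{a,c\}\{b,d\}$.

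The central reduction is: for any pair of the form $\langle xy;q\rangle_\infty\langle y/x;q\rangle_\infty = f(-xy,-q/(xy))\,f(-y/x,-qx/y)$, apply \eqref{abcd1} with $A=-xy$, $B=-q/(xy)$, $C=-y/x$, $D=-qx/y$, noting that $AB=CD=q$ so the hypothesis holds. Simplifying with \eqref{aa} at $n=1$ (which yields $f(1/x^2,\,q^2x^2) = x^{-2}f(x^2,q^2/x^2)$) and the symmetry $f(u,v)=f(v,u)$ (to rewrite $f(q/y^2,qy^2)$ as $f(qy^2,q/y^2)$), one arrives at
\begin{align*}
\langle xy;q\rangle_\infty\,\langle y/x;q\rangle_\infty = f(qx^2,q/x^2)\,f(y^2,q^2/y^2) - \tfrac{y}{x}\,f(x^2,q^2/x^2)\,f(qy^2,q/y^2).
\end{align*}

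Applying this reduction to both pairs in each of the three terms, each term expands as a sum of four products of four base-$q^2$ theta factors of the type $f(qz^2,q/z^2)$ or $f(z^2,q^2/z^2)$ (one factor for each $z\in\{a,b,c,d\}$). By the structure of the reduction, in each summand exactly two of the four variables carry a ``$q$-shifted'' factor, giving $\binom{4}{2}=6$ possible argument patterns across the twelve summands. A direct coefficient comparison shows that each pattern occurs in exactly two of the twelve summands (each time coming from a different one of the three partitions), with coefficients of equal magnitude and opposite sign — so all six groups cancel and the identity follows. The main obstacle is the pair reduction: one must execute the substitution into \eqref{abcd1} and the invocation of \eqref{aa} cleanly so that the many powers of $q$, $x$, $y$ collapse to the simple two-term formula above. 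After that, Step 3 is careful bookkeeping, where the cross-term factors like $(d/a)(c/b) = cd/(ab)$ in term one combine with the leading $b$ to give $cd/a$, matching exactly the analogous contribution from term three.
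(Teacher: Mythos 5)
Your proposal is correct, and I verified both the pair reduction and the final bookkeeping: with $A=-xy$, $B=-q/(xy)$, $C=-y/x$, $D=-qx/y$ one indeed has $AB=CD=q$, identity \eqref{abcd1} together with \eqref{aa} at $n=1$ gives
\begin{align*}
\langle xy;q\rangle_\infty\,\langle y/x;q\rangle_\infty
= f(qx^2,q/x^2)\,f(y^2,q^2/y^2)-\tfrac{y}{x}\,f(x^2,q^2/x^2)\,f(qy^2,q/y^2),
\end{align*}
and the six ``shifted-pair'' patterns each occur twice among the twelve summands with opposite coefficients ($\pm b$, $\pm c$, $\pm d$, $\pm bc/a$, $\pm bd/a$, $\pm cd/a$). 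This is the same overall strategy as the paper --- apply the two-term splitting \eqref{abcd1} to each of the three four-fold products and cancel the resulting twelve base-$q^2$ products in pairs --- but with a genuinely different choice of pairing. The paper pairs $\langle ad;q\rangle_\infty$ with $\langle bc;q\rangle_\infty$ and $\langle d/a;q\rangle_\infty$ with $\langle c/b;q\rangle_\infty$, so its intermediate theta functions have arguments mixing all four variables ($-abcd$, $-adq/bc$, $-cd/ab$, $\dots$), and one must match these multivariable arguments across the three expansions to see the cancellation. You instead pair $\langle ad;q\rangle_\infty$ with $\langle d/a;q\rangle_\infty$ and $\langle bc;q\rangle_\infty$ with $\langle c/b;q\rangle_\infty$, which decouples the variables entirely: every summand is a product of single-variable factors $f(qz^2,q/z^2)$ or $f(z^2,q^2/z^2)$ over $z\in\{a,b,c,d\}$, and the cancellation is indexed cleanly by the $\binom{4}{2}=6$ two-element subsets. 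That buys a more transparent endgame at no extra cost. Two trivial remarks: you wrote ``dividing'' by $(q;q)_\infty^4$ where you mean multiplying (the two formulations are equivalent since $(q;q)_\infty\neq 0$ for $|q|<1$), and for the third term's pair $\langle bd;q\rangle_\infty\langle b/d;q\rangle_\infty$ one must take $x=d$, $y=b$ in your reduction; neither affects the argument.
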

\begin{proof} In \eqref{abcd1}, replacing $a$ with $-ad$, $b$ with $-q/ad$,  $c$ with $-bc$, and
$d$ with $-q/bc$, we have
\begin{align*}
\langle ad, bc; q\rangle_\infty=\langle -abcd,-adq/bc;
q^2\rangle_\infty-ad\langle -abcdq, -bc/ad; q^2)_\infty.
\end{align*}
We replace $a$ with $-d/a$, $b$ with $-qa/d$,  $c$ with $-c/b$, and
$d$ with $-qc/d$ in \eqref{abcd1} to deduce that
\begin{align*}
\langle d/a, c/b; q\rangle_\infty=\langle -cd/ab, -bdq/ac;
q^2\rangle_\infty-d/a\langle -abq/cd, -ac/bd; q^2\rangle_\infty.
\end{align*}
So we have \begin{align} \label{sigma1}
&b\langle ad, bc, d/a, c/b; q\rangle_\infty\notag \\
=&b\langle -abcd,-adq/bc, -cd/ab, -bdq/ac;
q^2\rangle_\infty-bd/a\langle -abcd,-adq/bc,-abq/cd, -ac/bd;
q^2\rangle_\infty\notag \\
 &-abd\langle -abcdq,
-bc/ad,-cd/ab, -bdq/ac; q^2)_\infty)+bd^2\langle -abcdq, -bc/ad,
-abq/cd, -ac/bd; q^2\rangle_\infty.
\end{align}
Similarly we have
\begin{align} \label{sigma2}
&c\langle ab, cd, b/a,d/c; q\rangle _\infty\notag \\
=&c\langle -abcd,-abq/cd, -bd/ac, -adq/bc; q^2\rangle
_\infty-bc/a\langle -abcd,-abq/cd, -bdq/ac, -ad/bc; q^2\rangle
_\infty\notag \\
&-abc\langle -abcdq,-cd/ab, -bd/ac, -adq/bc; q^2\rangle
_\infty+b^2c\langle-abcdq,-cd/ab,-bdq/ac, -ad/bc; q^2\rangle_\infty,
\end{align}
and
\begin{align} \label{sigma3}
&d\langle ac, bd,c/a, b/d; q\rangle_\infty\notag \\
=&d\langle -abcd,-acq/bd, -bc/ad, -cdq/ab;
q^2\rangle_\infty-b\langle -abcd,-acq/bd, -bcq/ad, -cd/ab;
q^2\rangle_\infty\notag \\
&-bd^2\langle -abcdq,-ac/bd, -bc/ad, -cdq/ab;
q^2\rangle_\infty+b^2d\langle -abcdq,-ac/bd, -bcq/ad, -cd/ab;
q^2\rangle_\infty.
\end{align}
Adding \eqref{sigma1}--\eqref{sigma3}, we deduce Corollary
\ref{sigma} after simplification.
\end{proof}
From Corollary \ref{k1k2cor}, by choosing \[ B=\left(
\begin{array}{rr}
1 & 2\\
-1 & 1
\end{array}
\right),
\]
we have the following corollary.
\begin{cor}  \label{cd={ab}^2}
For $ab=q$ and $cd=q^2$,
\begin{align}
f(a,b)f(c,d)=&f(ac,bd)f(a^2dq, b^2cq)+af(acq, bd/q)f(a^2dq^3, b^2c/q)\notag \\
\label{hirschhorn1} &+bf(ac/q,bdq)f(a^2d/q, b^2cq^3).
\end{align}
\end{cor}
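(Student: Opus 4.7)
The strategy is simply to invoke Corollary \ref{k1k2cor} with $k_1 = 1$ and $k_2 = 2$, since the hypothesis $(cd) = (ab)^{k_1 k_2}$ becomes $q^2 = q^{1 \cdot 2}$, which is precisely what is given. Equivalently, this is Theorem \ref{2 main theorem} applied to $B = \bigl(\begin{smallmatrix} 1 & 2\\ -1 & 1\end{smallmatrix}\bigr)$: one checks the generalized orthogonality $l_1 b_{11} b_{12} + l_2 b_{21} b_{22} = 1\cdot 2 + 2\cdot(-1) = 0$ with $l_1 = 1,\ l_2 = 2$; the determinant is $k = 3$; and the first column of $B^{*}$ is $(1,1)^{T}$, which is coprime to $3$. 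Thus the sum runs over $r = 0, 1, 2$, producing exactly the three summands claimed.

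For each $r$, I would substitute into \eqref{k1k2} and simplify the arguments of the two theta functions using $ab = q$, folding expressions such as $a^3 b$ into $a^2 q$ and $a b^3$ into $q b^2$. The $r = 0$ summand then lands in the target form $f(ac, bd)\,f(a^2 dq, b^2 cq)$ after applying the symmetry $f(x,y) = f(y,x)$ (the labels $c$ and $d$ play symmetric roles since $cd = q^2$ is a monomial in $q$, so any discrepancy between my derivation and the stated form that only interchanges $c$ and $d$ is automatic).

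For $r = 1$ and $r = 2$, the arguments of the two inner theta functions contain mixed powers of $a$ and $b$ that must be rebalanced using the quasi-periodicity \eqref{aa}, $f(x,y) = x^{n(n+1)/2} y^{n(n-1)/2} f\!\left(x(xy)^n,\, y(xy)^{-n}\right)$. The crucial observation from the discussion following \eqref{case I1} is that the product $xy$ of the arguments is independent of $r$: for the two factor theta functions here one has $xy = q^{l_1 b_{11}^2 + l_2 b_{21}^2} = q^{3}$ and $xy = q^{l_1 b_{12}^2 + l_2 b_{22}^2} = q^{6}$ respectively. I would choose the shift integer $n$ in each case so that $x(xy)^n$ matches the first argument of the target theta function; the resulting scalar multipliers, combined with the leading factor $a^{(r^2+r)/2} b^{(r^2-r)/2}$, must then collapse (using only $ab = q$ and $cd = q^2$) to exactly $a$ for $r = 1$ and $b$ for $r = 2$.

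The main bookkeeping hurdle is the $r = 2$ case: both theta functions require a quasi-periodicity shift, and the product of their two scalar multipliers together with $a^3 b$ has to reduce to $b$. A quick scratch computation shows that choosing $n = -1$ for the $xy = q^{6}$ factor and $n = 1$ for the $xy = q^{3}$ factor does the job, leaving a clean factor of $b$. Beyond this, the argument is a mechanical verification with no further conceptual content; it can also be stated as the specialization $k_1=1,\ k_2=2$ of the corollary without re-deriving anything from Theorem \ref{main theorem}.
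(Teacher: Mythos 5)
Your proposal matches the paper's own derivation: the paper obtains this corollary precisely by choosing $B=\left(\begin{smallmatrix}1&2\\-1&1\end{smallmatrix}\right)$ in Corollary \ref{k1k2cor}, i.e.\ the specialization $k_1=1$, $k_2=2$, and your verification of the generalized orthogonality, the determinant $k=3$, the coprimality condition, and the quasi-periodicity adjustments at $r=2$ (yielding the clean coefficient $b$) is correct. The only cosmetic point is that the direct substitution produces the identity with $c$ and $d$ interchanged (equivalently, one may take $k_1=2$, $k_2=1$), which you rightly observe is harmless since the hypothesis $cd=q^2$ is symmetric in $c$ and $d$.
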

We can rewrite Corollary \ref{cd={ab}^2} in an equivalent form. For
$a_1, a_2\neq 0$,
\begin{align}
&\langle -a_1q; q^2\rangle_\infty\langle -a_2q^2;
q^4\rangle_\infty\notag
\\
=&\langle-a_1{a_2}^{-1}q^3;q^6\rangle_\infty\langle-{a_1}^2a_2q^6;q^{12}\rangle_\infty+a_1q\langle-a_1{a_2}^{-1}q^5;q^6\rangle_\infty\langle-{a_1}^2a_2q^{10};q^{12}\rangle_\infty\notag
\\
\label{hirschhorn2}
&+{a_1}^{-1}q\langle-a_1{a_2}^{-1}q;q^6\rangle_\infty\langle-{a_1}^2a_2q^2;q^{12}\rangle_\infty.
\end{align}
This is Hirschhorn's generalization of the quintuple product
identity in \cite{hir2}.

In \eqref{hirschhorn2}, we replace $a_1$ with $aq$, $a_2$ with
$-a^{-2}$, divide both sides by $(q^4;q^4)_\infty$, and then replace
$q^2$ with $q$. We obtain the quintuple product identity in the form
of \eqref{qpi}.

Next we choose
\[
B=\left(
\begin{array}{rr}
1 & 2\\
-2 & 1
\end{array}
\right)
\]
in Theorem \ref{2 main theorem} and deduce a generalization of the
septuple product identity after simplification.
\begin{cor} [Generalized
Septuple Product Identity]\label{cor sep} For $ab=cd=q$,
\begin{align}
f(a,b)f(c,d)=&f(ac^2q, bd^2q)f(a^2dq,b^2cq)+af(ac^2q^2, bd^2)f(a^2dq^3, b^2c/q)\nonumber \\
&+bf(ac^2, bd^2q^2)f(a^2d/q, b^2cq^3)+cf(ac^2q^3, bd^2/q)f(a^2d, b^2cq^2)\nonumber \\
\label{sep} &+df(ac^2/q, bd^2q^3)f(a^2dq^2, b^2c).
\end{align}
\end{cor}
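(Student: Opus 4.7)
The plan is to apply Theorem~\ref{2 main theorem} to the ``generalized orthogonal'' matrix
\[
B=\begin{pmatrix} 2 & 1 \\ -1 & 2 \end{pmatrix},
\]
whose two columns are orthogonal in the ordinary sense and which satisfies $l_1b_{11}b_{12}+l_2b_{21}b_{22}=2-2=0$ with $l_1=l_2=1$ (valid since $ab=cd=q$). One checks $\det B=5>0$ and $\gcd(b_{21},b_{11}b_{22})=\gcd(-1,4)=1$, so the coprimality hypothesis of the theorem is met with $j=1$. Hence Theorem~\ref{2 main theorem} expresses $f(a,b)f(c,d)$ as a sum of $k=5$ products of theta functions.

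Next, I will index the sum over the centered complete residue system $r\in\{-2,-1,0,1,2\}$ (justified by the remark after Theorem~\ref{main theorem}). Substituting the entries of $B$ into \eqref{2genthmc} and using $ab=q$, $cd=q$ to consolidate exponents, a short computation gives the second theta factor as
\[
f\bigl(ac^{2}q^{\,r+1},\,bd^{2}q^{\,1-r}\bigr),
\]
and the first theta factor and monomial prefactor together as
\[
a^{\frac{r^{2}+r}{2}}b^{\frac{r^{2}-r}{2}}\,f\bigl(a^{3+2r}b^{1+2r}d,\;a^{1-2r}b^{3-2r}c\bigr).
\]
The second factor matches the five second factors claimed for the values $r=0,\,1,\,-1,\,2,\,-2$ respectively.

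For $r=0,1,-1$ the first theta factor and its prefactor reduce directly (using only $ab=q$) to $f(a^{2}dq,b^{2}cq)$, $a\,f(a^{2}dq^{3},b^{2}c/q)$, $b\,f(a^{2}d/q,b^{2}cq^{3})$, exactly the first three terms in the claim. The technical obstacle is $r=\pm 2$: the raw first theta factors come out as $f(a^{2}q^{5}d,\,c/(a^{2}q))$ and $f(a^{2}d/q^{3},\,b^{2}cq^{5})$, which do not at a glance equal $f(a^{2}d,b^{2}cq^{2})$ and $f(a^{2}dq^{2},b^{2}c)$. To reconcile them I will apply the quasi-periodicity identity \eqref{aa} with $n=-1$ and $n=1$ respectively; this shifts each pair of arguments back into the target window and produces an extra monomial that, combined with the original prefactor $a^{3}b$ (for $r=2$) or $ab^{3}$ (for $r=-2$), collapses to exactly $c$ and $d$. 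The only non-routine verification is this monomial bookkeeping; it rests on $(ab)^{k}cd=q^{k+1}$ and the elementary identities $1/a=b/q$, $1/c=d/q$. Summing the five simplified contributions yields \eqref{sep}.
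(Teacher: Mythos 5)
Your proof is correct and follows essentially the same route as the paper, which simply says to choose $B=\left(\begin{smallmatrix}1&2\\-2&1\end{smallmatrix}\right)$ in Theorem \ref{2 main theorem} and simplify; your matrix $\left(\begin{smallmatrix}2&1\\-1&2\end{smallmatrix}\right)$ is an equivalent representative (it differs only by the row/column symmetries the paper notes leave the resulting identity unchanged, amounting to relabeling $f(a,b)$ and $f(c,d)$). The only added value is that you make explicit the ``after simplification'' step the paper omits, namely the use of \eqref{aa} with $n=\mp1$ to normalize the $r=\pm2$ terms, and that bookkeeping checks out.
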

We can rewrite \eqref{septuple1} as follows. For $a_1, a_2\neq 0$,
\begin{align}
&\langle-{a_1};q^2\rangle_\infty\langle-{a_2};q^2\rangle_\infty\notag\\
=&\langle-a_1{a_2}^2q^2;
q^{10}\rangle_\infty\langle-{a_1}^{2}{a_2}^{-1}q^4;
q^{10}\rangle_\infty+a_1\langle-a_1{a_2}^2q^4;
q^{10}\rangle_\infty\langle-{a_1}^{2}{a_2}^{-1}q^8;
q^{10}\rangle_\infty \notag \\
&+a_2\langle-a_1{a_2}^2q^6;
q^{10}\rangle_\infty\langle-{a_1}^{2}{a_2}^{-1}q^2;
q^{10}\rangle_\infty+a_1a_2\langle-a_1{a_2}^2q^8;
q^{10}\rangle_\infty\langle-{a_1}^{2}{a_2}^{-1}q^6;
q^{10}\rangle_\infty
\notag \\
\label{septuple1} &+{a_1}{a_2}^{-1}q^2\langle-a_1{a_2}^2;
q^{10}\rangle_\infty\langle-{a_1}^{2}{a_2}^{-1}q^{10};
q^{10}\rangle_\infty .
\end{align}
Replacing $a_1$ with $-a$ and $a_2$ with $-a^2$ in
\eqref{septuple1}, we have the septuple product identity in the form
of \eqref{spi}.

Identity \eqref{sep} leads to two of Ramanujan's modular identities
which are very important in the study of the modular equations of
degree $5$. We cite the identities in \cite[p.~262, Entry 10 (iv)
and (v)]{Ber1}.
\begin{cor}
\begin{align}
\label{sepcor1}
\varphi^2(q)-\varphi^2(q^5)&=4qf(q, q^9)f(q^3, q^7), \\
\label{sepcor2} \psi^2(q)-q\psi^2(q^5)&=f(q, q^4)f(q^2, q^3).
\end{align}
\end{cor}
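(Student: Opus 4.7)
The plan is to specialize the generalized septuple product identity \eqref{sep} in two different ways, choosing parameters so that the product on the left-hand side becomes $\varphi^2(q)$ or $\psi^2(q)$, and then simplifying the five terms on the right-hand side.

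For \eqref{sepcor1}, I would first replace $q$ by $q^2$ throughout \eqref{sep}, so that the hypothesis becomes $ab=cd=q^2$, and then set $a=b=c=d=q$. The left side is immediately $f(q,q)^2=\varphi^2(q)$. Computing the exponents in each of the five terms on the right, the first term becomes $f(q^5,q^5)^2=\varphi^2(q^5)$, while the four remaining terms each simplify, using only $f(a,b)=f(b,a)$, to the common value $qf(q,q^9)f(q^3,q^7)$. Summing yields \eqref{sepcor1} without further work.

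For \eqref{sepcor2}, I would replace $q$ by $q^4$ in \eqref{sep} (so that $ab=cd=q^4$) and set $a=c=q$, $b=d=q^3$, which gives $f(a,b)f(c,d)=f(q,q^3)^2=\psi^2(q)$. After evaluating the five terms, the fourth one collapses to $qf(q^5,q^{15})^2=q\psi^2(q^5)$, which accounts for the second term on the left of \eqref{sepcor2}. The fifth term produces $f(q^{-1},q^{21})$, which I would rewrite via the quasi-periodicity \eqref{aa} (with $n=1$) as $q^{-1}f(q,q^{19})$. After these reductions, what remains is
\[
\psi^2(q)-q\psi^2(q^5)=f(q^7,q^{13})f(q^9,q^{11})+qf(q^9,q^{11})f(q^3,q^{17})+q^3 f(q^3,q^{17})f(q,q^{19})+q^2 f(q,q^{19})f(q^7,q^{13}).
\]

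The key step is to recognize the right-hand side as a single product. I would factor it as
\[
\bigl[f(q^9,q^{11})+q^2 f(q,q^{19})\bigr]\bigl[f(q^7,q^{13})+qf(q^3,q^{17})\bigr],
\]
and then invoke the $k=2$ case of Entry 31 given in \eqref{entry 311}: applied with $(a,b)=(q^2,q^3)$ the first bracket is exactly $f(q^2,q^3)$, and applied with $(a,b)=(q,q^4)$ the second bracket is exactly $f(q,q^4)$. The only real obstacle I anticipate is the bookkeeping: correctly computing all five pairs of exponents after the substitution, and spotting the common factor structure among the four surviving terms that allows \eqref{entry 311} to close the identity.
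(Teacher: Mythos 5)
Your proposal is correct, and for \eqref{sepcor1} it coincides with the paper's proof: both set $a=b=c=d=q$ in \eqref{sep} (with the base rescaled so that $ab=cd=q^2$), read off $\varphi^2(q^5)$ from the first term, and observe that the remaining four terms are all equal to $qf(q,q^9)f(q^3,q^7)$ by the symmetry $f(a,b)=f(b,a)$.

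For \eqref{sepcor2} you take a genuinely different specialization. The paper chooses $a=c=1$, $b=d=q$ (so $ab=cd=q$ and no rescaling is needed); since $f(1,q)=2\psi(q)$ and $f(1,q^5)=2\psi(q^5)$, the left side becomes $4\psi^2(q)$, the third term collapses to $4q\psi^2(q^5)$, and the other four terms are each immediately equal to $f(q,q^4)f(q^2,q^3)$ after one application of \eqref{aa}, so the identity falls out with essentially no recombination. Your choice $a=c=q$, $b=d=q^3$ (with $q\mapsto q^4$) keeps everything inside $\psi$ rather than $f(1,\cdot)$, at the cost that the four surviving terms are genuinely distinct products; you then need the extra — and correct — observation that they factor as $\bigl[f(q^9,q^{11})+q^2f(q,q^{19})\bigr]\bigl[f(q^7,q^{13})+qf(q^3,q^{17})\bigr]$, each bracket being recognized via the $k=2$ dissection \eqref{entry 311} as $f(q^2,q^3)$ and $f(q,q^4)$ respectively. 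I verified the exponent bookkeeping in all five terms of your substitution, including the reduction $f(q^{-1},q^{21})=q^{-1}f(q,q^{19})$ via \eqref{aa}; everything checks. The paper's parameter choice buys a shorter computation, while yours illustrates that the same corollary can be reached from a different coset-representative normalization provided one is willing to reassemble the pieces with Entry 31.
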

\begin{proof}
For \eqref{sepcor1}, by choosing $a=b=c=d=q$ in \eqref{sep}, we can
deduce \eqref{sepcor1} after simplification.

For \eqref{sepcor2}, by
choosing $a=c=1$, $b=d=q$ in \eqref{sep}, we have $f(1, q)-qf(1,
q^5)=4f(q, q^4)f(q^2, q^3)$, which is equivalent to \eqref{sepcor2}.
\end{proof}

On page 207 of his lost notebook, Ramanujan listed three identities
\begin{align}
P-Q&=1+\frac{f(-q^{1/5}, -\lambda
q^{2/5})}{q^{1/5}f(-{\lambda}^{10}q^{5},
-{\lambda}^{15}q^{10})},\\
\label{PQ} PQ&=1-\frac{f(-\lambda, -{\lambda}^4q^3)f(-{\lambda}^2q,
-{\lambda}^3q^2)}{f^2(-{\lambda}^{10}q^{5},
-{\lambda}^{15}q^{10})},\\ P^5-Q^5&=1+5PQ+5P^2Q^2+\frac{f(-q,
-{\lambda}^5q^2)f^5(-{\lambda}^2q,
-{\lambda}^3q^2)}{qf^6(-{\lambda}^{10}q^{5}, -{\lambda}^{15}q^{10})}
\end{align}
without specifying $P$ and $Q$. S.~H.~Son \cite{son1} first
determined the functions $P$ and $Q$. His proof of \eqref{PQ} is
equivalent to the proof of the identity
\begin{align}
S:=&f(-\lambda, -{\lambda}^4q^3)f(-{\lambda}^2q, -{\lambda}^3q^2)
\notag \\
=&f^2(-{\lambda}^{10}q^5, -{\lambda}^{15}q^{10})-\lambda
f(-{\lambda}^{5}q^4, -{\lambda}^{20}q^{11})f(-{\lambda}^{10}q^7,
-{\lambda}^{15}q^{8})\notag \\
&-{\lambda}^2qf(-{\lambda}^{5}q^4,
-{\lambda}^{20}q^{11})f(-{\lambda}^{5}q^2,
-{\lambda}^{20}q^{13})+{\lambda}^3qf(-q,
-{\lambda}^{25}q^{14})f(-{\lambda}^{10}q^7,
-{\lambda}^{15}q^{8})\notag \\
\label{son id 2} &+{\lambda}^4q^2f(-q,
-{\lambda}^{25}q^{14})f(-{\lambda}^{5}q^2, -{\lambda}^{20}q^{13}).
\end{align}
Observing that $-\lambda\cdot-{\lambda}^4q^3=-{\lambda}^2q\cdot
-{\lambda}^3q^2={\lambda}^5q^3$, and the right-hand side of
\eqref{son id 2} is the sum of five parts. We choose $a=-\lambda$,
$b=-{\lambda}^4q^3$, $c=-{\lambda}^2q$, and $d=-{\lambda}^3q^2$ in
\eqref{sep}. Then \eqref{son id 2} follows immediately.

Define the septic Rogers-Ramanujan functions
\begin{align*}
A(q): &= \sum^{\infty}_{n=0}\frac{q^{2n^2}}{(q^2; q^2)_n(-q;
q)_{2n}}=\frac{(q^7; q^7)_\infty(q^3; q^7)_\infty(q^4;
q^7)_\infty}{(q^2; q^2)_\infty}=\frac{f(-q^3, -q^4)}{f(-q^2)},\notag
\\
B(q): &= \sum^{\infty}_{n=0}\frac{q^{2n(n+1)}}{(q^2; q^2)_n(-q;
q)_{2n}}=\frac{(q^7; q^7)_\infty(q^2; q^7)_\infty(q^5;
q^7)_\infty}{(q^2; q^2)_\infty}=\frac{f(-q^2, -q^5)}{f(-q^2)},\notag
\\
C(q): &= \sum^{\infty}_{n=0}\frac{q^{2n^2}}{(q^2; q^2)_n(-q;
q)_{2n}}=\frac{(q^7; q^7)_\infty(q; q^7)_\infty(q^6;
q^7)_\infty}{(q^2; q^2)_\infty}=\frac{f(-q, -q^6)}{f(-q^2)},
\end{align*}
where the equalities are from L. J. Slater \cite{slater}. H. Hahn
\cite{hahn1} obtained many identities for the septic
Rogers-Ramanujan functions. Identities (3.1)--(3.13) of \cite{hahn1}
are special cases of Theorem \ref{main theorem}. Next we prove (3.1)
of \cite{hahn1} as an example and omit the proofs of the others.
\begin{align} \label{hahn1}
A_1B_3-qB_1C_3-C_1A_3&=0.
\end{align}
Here
\[A_n: = A(q^n), \qquad B_n: = B(q^n), \qquad C_n: = C(q^n)\]
for positive $n$.
\begin{proof}
After simplification, we can write \eqref{hahn1} as
\begin{align} \label{hahn11}
f(-q^3, -q^4)f(-q^6, -q^{15})-qf(-q^2, -q^5)f(-q^3, -q^{18})-f(-q,
-q^6)f(-q^9, -q^{12})=0.
\end{align}
Since $-q^3\cdot-q^4=-q^2. -q^5=-q\cdot-q^6=q^7$,
$-q^6\cdot-q^{15}=-q^3\cdot-q^{18}=-q^9\cdot-q^{12}=q^{21}$, by
Theorem \ref{2 main theorem}, we need to solve the system of
equations
\begin{equation}\label{eqhh}
\left\{
\begin{array}{rcrcrcrcr}
l_1b^2_{11} &+& l_2b^2_{21}   &=& 7, \\
l_1b^2_{12} &+& l_2b^2_{22}   &=& 21,\\
\end{array}
\right.
\end{equation}
with $l_i$ and $b_{ij}$ as unknown integers. We can find that
$l_1=1$, $l_2=3$. The solutions $b_{ij}$'s to equation \eqref{eqhh}
differ only by signs, and they essentially generate the same
identity. So we can choose
\begin{align*}
B=\left(
\begin{array}{rr}
2 & 3\\
-1 & 2
\end{array}
\right).
\end{align*}
We omit the details here. Note that $k=\det B=7$. As before, adding
up the contribution of each member of the ECS corresponding to $B$,
we can deduce that the left-hand side of \eqref{hahn11} is $\ds
\frac{1}{2}f(-1, -q)f(-1, -q^3)$. Since $f(-1, a)=0$ for any
$|a|<1$, \eqref{hahn11} holds.
\end{proof}
Identity \eqref{hahn1} is not a special case of Corollary
\ref{k1k2cor}. The reason is if we use Corollary \ref{k1k2cor}, then
based on $l_1=1$ and $l_2=3$, the simplest matrix
\[ B=\left(
\begin{array}{rr}
1 & 3\\
-1 & 1
\end{array}
\right)
\]
would arise, but this $B$ cannot be used to deduce \eqref{hahn1}.

Schr\"{o}ter's formula first appeared in H. Schr\"{o}ter's
dissertation in 1854. It is very useful in proving many of
Ramanujan's modular equations. Next we show that Theorem \ref{2 main
theorem} is also a generalization of Schr\"{o}ter's formula and
derive several generalized forms of Schr\"{o}ter's formula. We use
the notation of Schr\"{o}ter's formula in \cite{borwein}.

Let a general theta function be defined as: $\ds
T(x,q)=\sum^\infty_{n=-\infty}x^nq^{n^2}$, where $x\neq 0$, $|q|<1$.
It is obvious that $T(x,q)=T(1/x, q)$ from this definition.
\begin{thm}[Schr\"{o}ter's Formula]
For positive integers $a$, $b$,
\begin{align} \label{schroter1}
T(x,q^a)T(y,q^b)=\sum^{a+b-1}_{n=0}y^nq^{bn^2}T(xyq^{2bn};q^{a+b})T(x^{-b}y^{a}q^{2abn},q^{ab^2+a^2b}).
\end{align}
\end{thm}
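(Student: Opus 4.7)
The plan is to derive Schr\"{o}ter's formula as a direct specialization of Theorem \ref{2 main theorem}. First I would identify $T$ with Ramanujan's theta function: rewriting $T(x,q^a)=\sum_{n}x^nq^{an^2}=\sum_{n}(xq^a)^{n(n+1)/2}(q^a/x)^{n(n-1)/2}$ shows that $T(x,q^a)=f(xq^a,q^a/x)$ with $(xq^a)(q^a/x)=q^{2a}$, and likewise $T(y,q^b)=f(yq^b,q^b/y)$ with $l=2b$. So $T(x,q^a)T(y,q^b)$ is a product of two Ramanujan theta functions of the exact type treated in Theorem \ref{2 main theorem}.

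To make the prefactor of the resulting sum match the desired $y^n q^{bn^2}$, I would apply \eqref{2genthmc} with its four parameters taken in the order $(yq^b,q^b/y,xq^a,q^a/x)$, so that $l_1=2b$ and $l_2=2a$, together with the matrix
\[B=\begin{pmatrix}1 & -a\\ 1 & b\end{pmatrix}.\]
The generalized orthogonality relation $l_1 b_{11}b_{12}+l_2 b_{21}b_{22}=2b(1)(-a)+2a(1)(b)=0$ is satisfied, $\det B=a+b>0$ gives $k=a+b$ (matching the upper index of the sum in \eqref{schroter1}), and $b_{21}=1$ trivially yields $\gcd(b_{21},b_{11}b_{22})=1$, so every hypothesis of Theorem \ref{2 main theorem} is met.

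The three ingredients in \eqref{2genthmc} then simplify cleanly. The prefactor is $(yq^b)^{(r^2+r)/2}(q^b/y)^{(r^2-r)/2}=y^r q^{br^2}$. Using $b_{11}=b_{21}=1$ together with $ab=q^{2b}$, $cd=q^{2a}$, the first theta factor collapses to $f(xy\,q^{a+b+2br},(xy)^{-1}q^{a+b-2br})$, which by the identification $T(X,q^{a+b})=f(Xq^{a+b},q^{a+b}/X)$ equals $T(xy\,q^{2br},q^{a+b})$. Using $b_{12}=-a$, $b_{22}=b$ and $ab(a+b)=ab^2+a^2 b$, the second theta factor becomes $f(x^{b}y^{-a}q^{ab(a+b)-2abr},\,x^{-b}y^{a}q^{ab(a+b)+2abr})$, which by the symmetry $T(X,q^c)=T(1/X,q^c)$ equals $T(x^{-b}y^{a}q^{2abr},q^{ab^2+a^2b})$. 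Renaming the summation index $r$ as $n$ reproduces \eqref{schroter1} exactly.

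The main obstacle is bookkeeping in the second theta factor, where $b_{12}^2=a^2$ and the four exponents $(yq^b)^{(a^2\pm a)/2\mp ar}(q^b/y)^{(a^2\mp a)/2\mp ar}(xq^a)^{(b^2\pm b)/2}(q^a/x)^{(b^2\mp b)/2}$ must be collapsed to a single monomial in $x$, $y$, $q$. The cleanest route is to pull out the common products $(yq^b\cdot q^b/y)^{(a^2-a)/2\mp ar}=q^{b(a^2-a)\mp 2abr}$ and $(xq^a\cdot q^a/x)^{(b^2-b)/2}=q^{a(b^2-b)}$ first, leaving only the isolated powers $(yq^b)^{\pm a}$ and $(xq^a)^{\pm b}$ to combine with them; after this rearrangement the verification is routine and no other nontrivial step remains.
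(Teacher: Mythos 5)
Your proposal is correct and follows essentially the same route as the paper, which also obtains Schr\"{o}ter's formula as the specialization of Theorem \ref{2 main theorem} to a $2\times 2$ ``generalized orthogonal'' matrix of determinant $a+b$ --- namely $\left(\begin{smallmatrix}1 & -b\\ 1 & a\end{smallmatrix}\right)$ (via \eqref{genschroter1} with $k_1=k_2=1$), which is your $\left(\begin{smallmatrix}1 & -a\\ 1 & b\end{smallmatrix}\right)$ with the roles of the two theta functions interchanged. Your identification $T(x,q^a)=f(xq^a,q^a/x)$, the verification of the hypotheses, and the exponent bookkeeping all check out.
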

Since $T(x, q)=f(xq, q/x)$, it is easy to see that Theorem \ref{2
main theorem} is a generalization of Schr\"{o}ter's formula. If we
use the notations of \cite{borwein}, we can rewrite Theorem \ref{2
main theorem} as below.
\begin{thm} [Generalized Schr\"{o}ter's Formula]
For positive integers $a,b$, we can find an $2\times2$ invertible
integer matrix $B$ such that $ab_{11}b_{12}+bb_{21}b_{22}=0$,
$\gcd(b_{21}, b_{11}b_{22})=1$ or $\gcd(b_{22}, b_{12}b_{21})=1$.
Let $k=\det B>0$. Then
\begin{align} \label{mgschroter1}
T(x,q^a)T(y,q^b)=\sum^{k-1}_{n=0}y^nq^{bn^2}T(x^{b_{11}}y^{b_{21}}q^{2bb_{21}n};q^{a{b_{11}}^2+b{b_{21}}^2})T(x^{b_{12}}y^{b_{22}}q^{2bb_{22}n};q^{a{b_{12}}^2+b{b_{22}}^2}).
\end{align}
\end{thm}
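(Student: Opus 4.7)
The plan is to derive \eqref{mgschroter1} as a direct translation of Theorem \ref{2 main theorem} (more precisely, of its variant \eqref{2genthm}) into Schr\"{o}ter's $T$-notation. The bridge is the elementary identity $T(z,q^m)=f(zq^m,q^m/z)$, which is immediate from the definitions of $T$ and $f$ and is exactly how the paper passes between the two conventions.

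First I would set $\alpha=xq^a$, $\beta=q^a/x$, $\gamma=yq^b$, $\delta=q^b/y$, so that $\alpha\beta=q^{2a}$ and $\gamma\delta=q^{2b}$. In the notation of Theorem \ref{2 main theorem} this corresponds to $l_1=2a$ and $l_2=2b$, so the generalized orthogonality $l_1b_{11}b_{12}+l_2b_{21}b_{22}=0$ becomes $ab_{11}b_{12}+bb_{21}b_{22}=0$ after dividing by $2$, while the gcd hypothesis transfers verbatim. Because the desired prefactor is $y^nq^{bn^2}$, I would apply the variant \eqref{2genthm} (summing with the $c,d$ variables) rather than \eqref{2genthmc}; a one-line check then gives $\gamma^{(r^2+r)/2}\delta^{(r^2-r)/2}=y^rq^{br^2}$, which is precisely the desired prefactor.

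Next I would reduce the two factors inside each summand. For the first $f$ in \eqref{2genthm}, a direct collection of the exponents of $x$, $y$, and $q$ under the substitution yields the two arguments
\begin{align*}
x^{b_{11}}y^{b_{21}}q^{M_1+2bb_{21}r}\quad\text{and}\quad x^{-b_{11}}y^{-b_{21}}q^{M_1-2bb_{21}r},
\end{align*}
where $M_1:=ab_{11}^2+bb_{21}^2$. Their product is $q^{2M_1}$, so they are precisely $Zq^{M_1}$ and $q^{M_1}/Z$ with $Z=x^{b_{11}}y^{b_{21}}q^{2bb_{21}r}$; hence the first $f$ equals $T\bigl(x^{b_{11}}y^{b_{21}}q^{2bb_{21}r},\,q^{M_1}\bigr)$. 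The identical calculation for the second $f$ produces $T\bigl(x^{b_{12}}y^{b_{22}}q^{2bb_{22}r},\,q^{ab_{12}^2+bb_{22}^2}\bigr)$, completing the translation.

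The only obstacle is careful bookkeeping of the many half-integer exponents in $x$, $y$, and $q$; no new combinatorial or analytic input is required beyond Theorem \ref{2 main theorem}. As a sanity check I would recover the classical Schr\"{o}ter formula \eqref{schroter1} by choosing the matrix with rows $(1,-b)$ and $(1,a)$, which satisfies the orthogonality $a\cdot 1\cdot(-b)+b\cdot 1\cdot a=0$, the gcd condition $\gcd(b_{21},b_{11}b_{22})=\gcd(1,a)=1$, and has $\det B=a+b$, matching the summation range in \eqref{schroter1}.
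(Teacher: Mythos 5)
Your translation is exactly what the paper does: the paper offers no proof of \eqref{mgschroter1} beyond the remark that $T(x,q)=f(xq,q/x)$ lets one rewrite Theorem \ref{2 main theorem}, and your substitution $\alpha=xq^a$, $\beta=q^a/x$, $\gamma=yq^b$, $\delta=q^b/y$ with $l_1=2a$, $l_2=2b$, together with the exponent bookkeeping giving $T\bigl(x^{b_{11}}y^{b_{21}}q^{2bb_{21}r},\,q^{ab_{11}^2+bb_{21}^2}\bigr)$, is the intended (and correctly executed) computation; your recovery of \eqref{schroter1} from the matrix with rows $(1,-b)$ and $(1,a)$ also matches the paper.

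There is, however, one point you gloss over, and it is a real one. You correctly note that the prefactor $y^nq^{bn^2}$ forces you to invoke the variant \eqref{2genthm} rather than \eqref{2genthmc}, but then you assert that ``the gcd hypothesis transfers verbatim.'' It does not: \eqref{2genthm} is stated under the hypothesis $\gcd(b_{11},b_{12}b_{21})=1$ or $\gcd(b_{12},b_{11}b_{22})=1$, whereas the theorem you are proving assumes $\gcd(b_{21},b_{11}b_{22})=1$ or $\gcd(b_{22},b_{12}b_{21})=1$, which is the condition attached to \eqref{2genthmc}. These are not equivalent, and the stated hypothesis does not suffice for the stated conclusion: take $a=b=1$ and $B=\left(\begin{smallmatrix}2&2\\-1&4\end{smallmatrix}\right)$, which satisfies the orthogonality relation, has $k=10$, and has $\gcd(b_{21},b_{11}b_{22})=\gcd(-1,8)=1$, yet $\gcd(2,-2)=\gcd(2,8)=2$; here $(0,5)^{T}=B(-1,1)^{T}$ lies in $B\Z^2$, so the representatives $(0,n)^{T}$, $0\le n\le 9$, hit only five cosets (each twice) and the $n=0$ and $n=5$ summands of \eqref{mgschroter1} coincide, so the identity fails. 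The correct hypothesis for \eqref{mgschroter1} is the one attached to \eqref{2genthm} (equivalently, an entry of the second column of $B^*$ coprime to $k$). This mismatch is already present in the paper's own statement of the theorem, so your proof is no worse than the source; but a careful write-up should either replace the gcd hypothesis or switch to the $x^n q^{an^2}$-prefactor form coming from \eqref{2genthmc} so that hypothesis and conclusion line up.
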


\begin{cor} \label{genschroter1thm} For positive integers $a,b,k_i$
$(i=1, 2)$, we have
\begin{align} \label{genschroter1}
T(x,q^a)T(y,q^b)=\sum^{k_2a+k^2_1k_2b-1}_{n=0}y^nq^{bn^2}T(xy^{k_1}q^{2k_1bn};q^{a+b{k_1}^2})T(x^{-k_1k_2b}y^{k_2a}q^{2k_2abn},q^{{k_1}^2{k_2}^2ab^2+{k_2}^2a^2b}).
\end{align}
\end{cor}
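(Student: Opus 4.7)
The plan is to derive \eqref{genschroter1} as a direct specialization of the Generalized Schr\"{o}ter's Formula \eqref{mgschroter1}, so the argument reduces to identifying the correct $2 \times 2$ matrix $B$ and verifying the hypotheses of Theorem \ref{main theorem}.

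Reading off the exponents of $x$, $y$, and $q$ in the two theta factors on the right-hand side of \eqref{genschroter1}, I would take
\[B = \begin{pmatrix} 1 & -k_1 k_2 b \\ k_1 & k_2 a \end{pmatrix},\]
so that $b_{11} = 1$, $b_{21} = k_1$ match the first factor $T(xy^{k_1}q^{2k_1 bn};q^{a+bk_1^2})$, while $b_{12} = -k_1 k_2 b$, $b_{22} = k_2 a$ match the second factor $T(x^{-k_1 k_2 b} y^{k_2 a}q^{2k_2 abn}; q^{k_1^2 k_2^2 ab^2 + k_2^2 a^2 b})$. With this choice, the generalized orthogonality relation is immediate, since $a b_{11} b_{12} + b b_{21} b_{22} = -k_1 k_2 ab + k_1 k_2 ab = 0$, and the determinant evaluates to $\det B = k_2 a + k_1^2 k_2 b$, which matches the upper summation bound in \eqref{genschroter1}.

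The coprimality hypothesis is automatic: since $b_{11} = 1$, the $(2,2)$-entry of the adjugate $B^*$ equals $b_{11} = 1$, which is coprime to $k$. Thus Theorem \ref{main theorem} applies with $j = 2$, which is the index corresponding to the prefactor $y^n q^{bn^2}$ appearing in \eqref{mgschroter1}. Substituting the entries of $B$ into \eqref{mgschroter1} and simplifying the moduli via $a b_{11}^2 + b b_{21}^2 = a + k_1^2 b$ and $a b_{12}^2 + b b_{22}^2 = k_1^2 k_2^2 a b^2 + k_2^2 a^2 b$ yields \eqref{genschroter1} directly.

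The only genuinely creative step is the choice of $B$, which is reverse-engineered from the exponent structure on the right-hand side of the target identity; after that, there is no serious obstacle, and the verification is mechanical. The one subtle point is to keep in mind that we invoke the $j=2$ branch of Theorem \ref{main theorem}, since the summation variable enters through $y^n q^{bn^2}$ and not through $x^n q^{an^2}$; this is precisely the branch whose coprimality requirement is trivially satisfied by $b_{11}=1$.
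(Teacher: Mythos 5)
Your proposal is correct and matches the paper's proof exactly: the paper's entire argument consists of choosing $B=\left(\begin{smallmatrix}1 & -k_1k_2b\\ k_1 & k_2a\end{smallmatrix}\right)$, precisely the matrix you reverse-engineered. Your additional verifications (orthogonality, determinant, and the observation that $b_{11}=1$ trivializes the coprimality condition on the $j=2$ branch) are just the routine details the paper leaves implicit.
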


\begin{proof}
Choose $$B= \left(
\begin{array}{rr}
1& -k_1k_2b\\
k_1 & k_2a
\end{array}
\right).$$

\end{proof}
\noindent If we choose $k_1=k_2=1$ in \eqref{genschroter1}, we
obtain Schr\"{o}ter's formula, which corresponds to $$B= \left(
\begin{array}{rr}
1& -b\\
1 & a
\end{array}
\right).$$

We show that S.~Kongsiriwong and Z.-G.~Liu's Theorems 6--9 in
\cite{kong} are special cases of Corollary \ref{genschroter1thm}.
\begin{cor} [Theorems 6--9 in \cite{kong}]
Let $z$ and $q$ be complex numbers with $z\neq 0$ and $|q| < 1$.
Then, for any positive integer $k$,
\begin{align} \label{kong6}
&\sum^{[(k-1)/2]}_{j=0}(-1)^jq^{j^2}\sum^{\infty}_{m=-\infty}(-1)^{(k-1)m}q^{(k^2+k)m^2-(2j+1)km}\notag
\\
&\times\sum^{\infty}_{n=-\infty}q^{(k+1)n^2}z^{(k+1)n}(q^{(2j-k)n}z^j-q^{(j-2k)n}z^{k-j})\notag
\\
=&(q^2;q^2)_\infty(zq;q^2)_\infty({z}^{-1}q;q^2)_\infty(q^{2k};q^{2k})_\infty({z}^k;q^{2k})_\infty({z}^{-k}q^{2k};q^{2k})_\infty,
\\
\label{kong7}
&\frac{1-(-1)^k}{2}(-1)^{(k+1)/2}q^{(k+1)^2/4}\sum^{\infty}_{m=-\infty}(-1)^{(k-1)m}q^{(k^2+k)m^2+(k^2+k)m}\notag
\\
&\times
\sum^{\infty}_{n=-\infty}q^{(k+1)n^2-(k+1)n}z^{(k+1)n-(k+1)/2}\notag
\\
&+\sum^{\infty}_{m=-\infty}(-1)^{(k-1)m}q^{(k^2+k)m^2}\sum^{\infty}_{n=-\infty}q^{(k+1)n^2}z^{(k+1)n}\notag
\\
&+\sum^{[{k}/2]}_{j=1}(-1)^jq^{j^2}\sum^{\infty}_{m=-\infty}(-1)^{(k-1)m}q^{(k^2+k)m^2-2jkm}
\sum^{\infty}_{n=-\infty}q^{(k+1)n^2}z^{(k+1)n}(q^{2jn}z^j-q^{-2jn}z^{-j})\notag
\\
=&(q^2;q^2)_\infty(zq;q^2)_\infty({z}^{-1}q;q^2)_\infty(q^{2k};q^{2k})_\infty({z}^kq^k;q^{2k})_\infty({z}^{-k}q^{k};q^{2k})_\infty,
\\ \label{kong8}
&\sum^{k}_{j=-k+1}(-1)^jq^{j^2-j}\sum^{\infty}_{m=-\infty}(-1)^{m}q^{(4k^2+k)m^2+(1-4j)km}\notag
\\
&\times \sum^{\infty}_{n=-\infty}(-1)^n
q^{(4k+1)n^2}z^{(4k+1)n}(q^{(2j-2k-1)n}z^j-q^{(2k+1-2j)n}z^{2k+1-j})\notag
\\
=&(q^2;q^2)_\infty(z;q^2)_\infty({z}^{-1}q^2;q^2)_\infty(q^{2k};q^{2k})_\infty({z}^{2k};q^{2k})_\infty({z}^{-2k}q^{2k};q^{2k})_\infty,
\\
 \label{kong9}
&\sum^{\infty}_{m=-\infty}(-1)^{m}q^{(4k^2+k)m^2}\sum^{\infty}_{n=-\infty}(-1)^nq^{(4k+1)n^2}z^{(4k+1)n}\notag
\\
&+\sum^{2k}_{j=1}(-1)^jq^{j^2}\sum^{\infty}_{m=-\infty}(-1)^{m}q^{(4k^2+k)m^2-4jkm}\notag
\sum^{\infty}_{n=-\infty}(-1)^nq^{(4k+1)n^2}z^{(4k+1)n}(q^{2jn}z^j+q^{-2jn}z^{-j})\notag
\\
=&(q^2;q^2)_\infty(zq;q^2)_\infty({z}^{-1}q;q^2)_\infty(q^{2k};q^{2k})_\infty({z}^{2k}q^k;q^{2k})_\infty({z}^{-2k}q^{k};q^{2k})_\infty.
\end{align}
\begin{proof}
For \eqref{kong6} and \eqref{kong7}, choose the matrix \[ B=\left(
\begin{array}{rr}
1 & -k\\
1 & 1
\end{array}
\right),
\]
this corresponds to the case $a=1$ and $b=k$ in Schr\"{o}ter's
formula \eqref{schroter1}.

For \eqref{kong8} and \eqref{kong9}, choose \[ B=\left(
\begin{array}{rr}
1 & -2k\\
2 & 1
\end{array}
\right),
\]
this corresponds to $a=1$, $b=k$, $k_1=2$, and $k_2=1$ in
\eqref{genschroter1}. We omit the details.
\end{proof}
\end{cor}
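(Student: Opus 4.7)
The plan is to derive each of \eqref{kong6}--\eqref{kong9} as a special case of the generalized Schr\"oter formula (Corollary~\ref{genschroter1thm}), by choosing an explicit matrix $B$ and then unwinding the resulting $T$-product via Jacobi's triple product identity \eqref{JTI}. The first step is to convert each right-hand side into $T$-notation: in every case
\[
(q^2;q^2)_\infty(zq;q^2)_\infty(z^{-1}q;q^2)_\infty = T(-z,q),
\]
and the remaining three factors collapse into a single $T(y, q^k)$ whose argument $y$ (up to sign and a factor of $q^{-k}$) is a power of $z$ determined by whether $q^{2k}$ or $q^k$ appears in the last three factors. Thus each identity is an expansion of a product $T(-z, q)\, T(y, q^k)$ and can be sought from a $2\times 2$ ``generalized orthogonal'' matrix.

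For \eqref{kong6} and \eqref{kong7}, I would apply Schr\"oter's formula \eqref{schroter1} with $a = 1$, $b = k$, corresponding to
\[
B = \left(\begin{array}{rr} 1 & -k \\ 1 & 1 \end{array}\right),\qquad \det B = k+1.
\]
For \eqref{kong8} and \eqref{kong9}, I would instead take
\[
B = \left(\begin{array}{rr} 1 & -2k \\ 2 & 1 \end{array}\right),\qquad \det B = 4k+1,
\]
which is the choice $(a,b,k_1,k_2) = (1,k,2,1)$ in Corollary~\ref{genschroter1thm}. Both matrices satisfy the generalized orthogonality relation $l_1 b_{11}b_{12} + l_2 b_{21}b_{22} = 0$. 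Applying \eqref{mgschroter1} expresses $T(-z, q)\, T(y, q^k)$ as a sum over $r = 0, \dots, \det B - 1$ of products of two theta functions with bases $q^{k+1}$, $q^{k^2+k}$ (respectively $q^{4k+1}$, $q^{4k^2+k}$), exactly the bases appearing on the left-hand sides.

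The last step is to expand each of the two inner $T$-functions as an infinite series in new indices (call them $m$ and $n$, matching the notation of Kongsiriwong--Liu), turning the single sum over $r$ into a triple sum. To produce the antisymmetric combinations $z^j q^{(2j-k)n} - z^{k-j} q^{(j-2k)n}$ in \eqref{kong6} and the analogous ones in \eqref{kong8}, I would pair the outer index $r$ with its image under $r \mapsto \det B - r$, exploiting the symmetry $T(x,q) = T(x^{-1},q)$. The sign factors $(-1)^j$, $(-1)^n$, $(-1)^{(k-1)m}$ appearing on the left-hand sides come out automatically from the $y^r q^{br^2}$ prefactor in Schr\"oter's formula via the minus signs in the chosen $y$.

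The main obstacle will be this index bookkeeping, which must be split according to the parity of $k$ (equivalently, whether some Schr\"oter terms vanish outright). When $k$ is even, direct calculation shows that for exactly one value of $r$ the argument of the inner theta function with base $q^{k^2+k}$ lands on $-q^{(k^2+k)(2j+1)}$ and hence is identically zero; this is the term that would have corresponded to the unpaired middle index $j = k/2$ on the left, where $z^j - z^{k-j} = 0$, so its absence is consistent. When $k$ is odd, no such vanishing occurs; instead $\det B = k+1$ is even and the reflection $r \mapsto \det B - r$ has the extra fixed point $r = (k+1)/2$, whose contribution is exactly the leading ``half-integer-shifted'' term $(1-(-1)^k)\,q^{(k+1)^2/4}/2\cdot\cdots$ in \eqref{kong7} and its analogue in \eqref{kong9}. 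Tracking these vanishings, pairings, fixed points, and the resulting exponent rearrangements --- in particular how $q^{br^2}$ transforms under $r \mapsto \det B - r$ --- is the only genuinely nontrivial part of the reduction; everything else is direct substitution into \eqref{mgschroter1}.
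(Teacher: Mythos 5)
Your proposal follows exactly the route the paper takes: it selects the same two matrices $B=\left(\begin{smallmatrix}1&-k\\1&1\end{smallmatrix}\right)$ and $B=\left(\begin{smallmatrix}1&-2k\\2&1\end{smallmatrix}\right)$, identifies them with the parameter choices $a=1$, $b=k$ in Schr\"oter's formula \eqref{schroter1} and $(a,b,k_1,k_2)=(1,k,2,1)$ in \eqref{genschroter1}, and then expands the resulting $T$-products --- the paper simply writes ``we omit the details'' at that point, whereas you supply a plausible account of the index pairing, the vanishing term for even $k$, and the fixed-point term producing the $\frac{1-(-1)^k}{2}$ contribution in \eqref{kong7}. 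So the approach is the same as the paper's, with your write-up being strictly more detailed.
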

If we choose $k_1=1$ and $k_2=k$ in \eqref{genschroter1}, we deduce
the following corollary.
\begin{cor}
For positive integers $a,b,k$,
\begin{align} \label{mult k}
T(x,q^a)T(y,q^b)=\sum^{ka+kb-1}_{n=0}y^nq^{bn^2}T(xyq^{2bn};q^{a+b})T(x^{-kb}y^{ka}q^{2kabn},q^{k^2ab(a+b)}).\end{align}
\end{cor}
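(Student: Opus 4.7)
The plan is essentially to read this off as a direct specialization of the preceding Corollary \ref{genschroter1thm}. That corollary gives, for any positive integers $a, b, k_1, k_2$,
\begin{align*}
T(x,q^a)T(y,q^b)=\sum^{k_2a+k_1^2k_2b-1}_{n=0}y^nq^{bn^2}T(xy^{k_1}q^{2k_1bn};q^{a+bk_1^2})T(x^{-k_1k_2b}y^{k_2a}q^{2k_2abn},q^{k_1^2k_2^2ab^2+k_2^2a^2b}).
\end{align*}
Setting $k_1=1$ and $k_2=k$, the summation range collapses to $ka+kb-1$, the first theta factor becomes $T(xyq^{2bn};q^{a+b})$, and the exponent in the modulus of the second factor becomes
\[
k_1^2k_2^2ab^2+k_2^2a^2b=k^2ab^2+k^2a^2b=k^2ab(a+b),
\]
which matches the claimed identity exactly. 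So the proof is a one-line substitution, and there is no genuine obstacle; the only thing to double-check is the bookkeeping on exponents.

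If one prefers to give a self-contained derivation rather than cite Corollary \ref{genschroter1thm}, the alternative is to apply Theorem \ref{2 main theorem} directly with the matrix
\[
B=\begin{pmatrix} 1 & -kb \\ 1 & ka \end{pmatrix},
\]
which has $\det B=ka+kb$ and satisfies the generalized orthogonal relation $a\cdot 1\cdot(-kb)+b\cdot 1\cdot ka=0$. One also verifies the gcd hypothesis: for instance $\gcd(b_{21},b_{11}b_{22})=\gcd(1,ka)=1$, so Theorem \ref{2 main theorem} applies. Translating the resulting identity from the $f(a,b)$-notation into the $T(x,q)$-notation via $T(x,q)=f(xq,q/x)$ and simplifying the exponents produces \eqref{mult k}.

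The main (and essentially only) step that needs care is verifying the modulus of the second theta factor: one has $a b_{12}^2+b b_{22}^2=a(kb)^2+b(ka)^2=k^2ab(a+b)$, and the linear shift in its first argument works out to $2kabn$ after collecting the $y^n$ power. Everything else is routine, so I would present the corollary as a two-sentence specialization of \eqref{genschroter1} with the exponent check $k^2ab^2+k^2a^2b=k^2ab(a+b)$ displayed for clarity.
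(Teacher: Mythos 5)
Your proof is correct and is exactly the paper's own derivation: the corollary is introduced there by the sentence ``If we choose $k_1=1$ and $k_2=k$ in \eqref{genschroter1}, we deduce the following corollary,'' and your exponent check $k_1^2k_2^2ab^2+k_2^2a^2b=k^2ab(a+b)$ is the only bookkeeping required. The paper additionally remarks that the same identity follows from Schr\"{o}ter's formula \eqref{schroter1} by replacing $q$ with $q^{1/k}$, $a$ with $ka$, and $b$ with $kb$, but your specialization (and your alternative via the matrix $\bigl(\begin{smallmatrix}1 & -kb\\ 1 & ka\end{smallmatrix}\bigr)$) matches the intended route.
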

\noindent We can obtain \eqref{mult k} directly from Schr\"{o}ter's
formula \eqref{schroter1} by replacing $q$ with $q^{1/k}$, $a$ with
$ka$, and $b$ with $kb$.

If we choose $k_1=k$ and $k_2=1$ in \eqref{genschroter1}, then we
arrive at the next corollary.
\begin{cor}
For positive integers $a,b,k$,
\begin{align} \label{mult k2}
T(x,q^a)T(y,q^b)=\sum^{a+k^{2}b-1}_{n=0}y^nq^{bn^2}T(xy^kq^{2bkn},q^{a+bk^2})T(x^{-kb}y^{a}q^{2abn},q^{k^2ab^2+a^2b}).
\end{align}
\end{cor}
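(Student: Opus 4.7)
The plan is to obtain \eqref{mult k2} as an immediate specialization of the Generalized Schr\"{o}ter Formula \eqref{genschroter1}, exactly mirroring the derivation of the previous corollary \eqref{mult k}. Whereas \eqref{mult k} came from the choice $k_1=1,\ k_2=k$, the present identity should come from the complementary choice $k_1=k,\ k_2=1$.

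First I would substitute $k_1=k$ and $k_2=1$ directly into \eqref{genschroter1} and simplify each ingredient in turn. The summation range $0\le n\le k_2a+k_1^{2}k_2b-1$ collapses to $0\le n\le a+k^{2}b-1$; the first theta factor $T(xy^{k_1}q^{2k_1bn};q^{a+bk_1^{2}})$ becomes $T(xy^{k}q^{2kbn};q^{a+bk^{2}})$; and the second theta factor $T(x^{-k_1k_2b}y^{k_2a}q^{2k_2abn};q^{k_1^{2}k_2^{2}ab^{2}+k_2^{2}a^{2}b})$ becomes $T(x^{-kb}y^{a}q^{2abn};q^{k^{2}ab^{2}+a^{2}b})$. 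Each component matches the right-hand side of \eqref{mult k2}, and the identity follows.

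As a sanity check at the level of Theorem \ref{2 main theorem}, this specialization corresponds to the matrix
$$B=\begin{pmatrix} 1 & -kb \\ k & a \end{pmatrix},$$
which satisfies the generalized orthogonality relation $a\cdot 1\cdot(-kb)+b\cdot k\cdot a=0$ and has $\det B=a+k^{2}b>0$, precisely the number of summands appearing in \eqref{mult k2}. The columns of $B$ then encode the two resulting theta factors via the recipe \eqref{mgschroter1}.

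Since the argument is pure substitution into a previously established formula, there is essentially no analytic obstacle. The only point that deserves a brief remark is the coprimality hypothesis of Theorem \ref{2 main theorem}, which here reduces to $\gcd(k,a)=1$ or $\gcd(a,k^{2}b)=1$. In the generic case this is automatic, and for the general case one may either argue that both sides are analytic functions of $q$ on $|q|<1$ with $x,y$ free parameters and remove the hypothesis by a density argument, or observe that the derivation via the ECS of $\mathbb{Z}^{2}$ associated with $B$ depends only on $\det B$ and not on any coprimality of entries.
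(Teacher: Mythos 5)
Your proposal is correct and is exactly the paper's derivation: the paper obtains \eqref{mult k2} by the single remark that one chooses $k_1=k$ and $k_2=1$ in \eqref{genschroter1}, which is precisely your substitution, and your matrix $B=\left(\begin{smallmatrix}1 & -kb\\ k & a\end{smallmatrix}\right)$ with $\det B=a+k^2b$ is the corresponding specialization of the matrix used for Corollary \ref{genschroter1thm}. Your closing remark on the coprimality hypothesis is a reasonable extra precaution that the paper itself does not address.
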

If we choose $k=2$, $a=3$, $b=1$, $x=-q^{-3}$, $y=-q^{-1}$ in
\eqref{mult k2}, and then replace $q^2$ with $q$, we can obtain
Hahn's identity \eqref{hahn1}.

Assuming that $k_2|a$, we can also derive a variation of
\eqref{genschroter1} by replacing $k_2$ with $1/k_2$.
\begin{cor}
For positive integers $a,b,k_i$ $(i=1, 2)$, $k_2|k_1b$, and $k_2|a$,
we have
\begin{align} \label{general schroter chu}
&T(x,q^a)T(y,q^b)\notag \\
=&\sum^{a/k_2+k^2_1b/k_2-1}_{n=0}y^nq^{bn^2}T(xy^{k_1}q^{2k_1bn};q^{a+b{k_1}^2})T(x^{-k_1b/k_2}y^{a/k_2}q^{2abn/k_2},q^{ab^2{k_1}^2/{k_2}^2+a^2b/{k_2}^2}).
\end{align}
\end{cor}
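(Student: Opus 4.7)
The plan is to apply Theorem \ref{2 main theorem} (in the form \eqref{2genthm}) with the integer matrix
\[
B = \begin{pmatrix} 1 & -k_1 b/k_2 \\ k_1 & a/k_2 \end{pmatrix}.
\]
This is precisely the matrix used in the proof of Corollary \ref{genschroter1thm} after the formal substitution $k_2 \mapsto 1/k_2$, and the two divisibility hypotheses $k_2 \mid k_1 b$ and $k_2 \mid a$ are exactly what is needed to keep every entry of $B$ integral and, as a consequence, every exponent in the resulting identity.

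I would verify the hypotheses of Theorem \ref{2 main theorem} in turn. First, the generalized orthogonality condition is immediate:
\[
a \cdot 1 \cdot (-k_1 b/k_2) + b \cdot k_1 \cdot (a/k_2) = 0.
\]
Second, since $b_{11}=1$, we have $\gcd(b_{11}, b_{12}b_{21}) = 1$, so the coprimality hypothesis needed to invoke \eqref{2genthm} (with $j = 2$) is automatically satisfied. Third, a direct computation gives
\[
\det B = \frac{a}{k_2} + \frac{k_1^2 b}{k_2} = \frac{a + k_1^2 b}{k_2},
\]
which produces the summation range $0 \le n \le a/k_2 + k_1^2 b/k_2 - 1$ claimed in the corollary.

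It then remains to match the arguments of the two output theta functions. For column $j$ of $B$, the new base of $q$ equals $a b_{1j}^2 + b b_{2j}^2$; this is $a + bk_1^2$ for $j=1$ and $ab^2 k_1^2/k_2^2 + a^2 b/k_2^2$ for $j=2$, agreeing with the bases $q^{a+bk_1^2}$ and $q^{ab^2 k_1^2/k_2^2 + a^2 b/k_2^2}$ of the theta functions on the right. Using the dictionary $T(x,q) = f(xq, q/x)$ and taking $a_1 = xq^a,\ b_1 = q^a/x,\ a_2 = yq^b,\ b_2 = q^b/y$, the prefactor $a_2^{(n^2+n)/2} b_2^{(n^2-n)/2}$ collapses to $y^n q^{bn^2}$, matching the outer sum.

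I expect the main ``obstacle'' to be purely bookkeeping: each of the four $x$- and $y$-exponents inside the two output theta functions must be re-expressed in terms of $x$, $y$, $q$ using the values of $b_{ij}$ listed above, exactly mirroring the derivation of Corollary \ref{genschroter1thm}. Nothing beyond Theorem \ref{2 main theorem} is invoked; the divisibility assumptions $k_2 \mid a$ and $k_2 \mid k_1 b$ are exactly what guarantees that the substitution $k_2 \mapsto 1/k_2$ in \eqref{genschroter1} preserves the integrality of every exponent that appears.
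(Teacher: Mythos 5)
Your proposal is correct and follows essentially the same route as the paper: the paper obtains this corollary by the formal substitution $k_2 \mapsto 1/k_2$ in Corollary \ref{genschroter1thm}, which amounts to applying Theorem \ref{2 main theorem} with exactly the matrix $B = \left(\begin{smallmatrix} 1 & -k_1b/k_2 \\ k_1 & a/k_2 \end{smallmatrix}\right)$ you chose, the hypotheses $k_2\mid k_1b$ and $k_2\mid a$ serving only to keep $B$ integral. Your verification of orthogonality, the determinant $(a+k_1^2b)/k_2$, the coprimality condition via $b_{11}=1$, and the bases $a+bk_1^2$ and $ab^2k_1^2/k_2^2+a^2b/k_2^2$ fills in the details the paper leaves implicit.
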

We obtain the next corollary if we choose the transformation matrix
$$
B = \left(
\begin{array}{rr}
1& -k_1k_2b\\
k_1a & k_2
\end{array}
\right).$$
\begin{cor} \label{schrotercy}
For positive integers $a,b,k_i(i=1, 2)$, we have
\begin{align} \label{schrotercy1}
&T(x,q^a)T(y,q^b)\notag \\
=&\sum^{k_2+k^2_1k_2ab-1}_{n=0}y^nq^{bn^2}T(xy^{k_1a}q^{2k_1abn};q^{a+b{k_1}^2a^2})T(x^{-k_1k_2b}y^{k_2}q^{2k_2abn},q^{{k_1}^2{k_2}^2ab^2+{k_2}^2b}).
\end{align}
\end{cor}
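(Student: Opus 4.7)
The plan is to invoke the generalized Schr\"{o}ter formula \eqref{mgschroter1} directly with the matrix
\[
B = \begin{pmatrix} 1 & -k_1 k_2 b \\ k_1 a & k_2 \end{pmatrix}.
\]
Since \eqref{mgschroter1} is just Theorem \ref{2 main theorem} rewritten in the $T$-notation with $j = 2$ (the summation variable corresponding to the parameters of the second theta function), it suffices to verify the hypotheses of Theorem \ref{2 main theorem} for this $B$ and then read off the right-hand side of \eqref{mgschroter1} after substituting the four entries of $B$. The argument is in the same spirit as the author's proof of Corollary \ref{genschroter1thm}.

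The key verifications proceed in order as follows. First, check the generalized orthogonality $a\, b_{11} b_{12} + b\, b_{21} b_{22} = 0$: the two products cancel via $-a k_1 k_2 b + b k_1 a k_2 = 0$. Second, compute $\det B = k_2 + k_1^2 k_2 a b > 0$, which matches the stated summation index range $0 \le n \le k_2 + k_1^2 k_2 a b - 1$. Third, verify the coprimality hypothesis by inspecting
\[
B^{*} = \begin{pmatrix} k_2 & k_1 k_2 b \\ -k_1 a & 1 \end{pmatrix};
\]
the $(2,2)$-entry equals $1$, trivially coprime to $k = \det B$, so Theorem \ref{2 main theorem} applies with $j = 2$. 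Finally, substitute $b_{11} = 1,\ b_{12} = -k_1 k_2 b,\ b_{21} = k_1 a,\ b_{22} = k_2$ into the two $T$-functions on the right-hand side of \eqref{mgschroter1}; the resulting moduli $q^{a b_{11}^2 + b b_{21}^2} = q^{a + b k_1^2 a^2}$ and $q^{a b_{12}^2 + b b_{22}^2} = q^{k_1^2 k_2^2 a b^2 + k_2^2 b}$, together with the $x$- and $y$-exponents coming from the two columns of $B$, match the claim.

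There is no real obstacle here: every step is a one-line computation, and the analytic content has already been absorbed in the derivation of \eqref{mgschroter1} from Theorem \ref{2 main theorem}. The only design choice that requires attention is using column $j = 2$ rather than $j = 1$, so that the entry $1$ of $B^{*}$ supplies the coprimality condition for free; this choice is essential because $\gcd(k_2, k)$ and $\gcd(k_1 a, k)$ coming from the first column of $B^{*}$ are in general not equal to $1$.
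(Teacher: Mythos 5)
Your argument is precisely the paper's: the author also obtains this corollary by choosing the transformation matrix $B=\left(\begin{smallmatrix}1& -k_1k_2b\\ k_1a & k_2\end{smallmatrix}\right)$ in the generalized Schr\"oter formula \eqref{mgschroter1} (equivalently, by substituting $k_1\mapsto k_1a$ and $k_2a\mapsto k_2$ in Corollary \ref{genschroter1thm}), and your verifications of the orthogonality relation, of $\det B=k_2+k_1^2k_2ab$, and of the unit entry of $B^*$ forcing $j=2$ are exactly the details the paper leaves implicit. The one item you assert rather than compute is the $q$-power inside the second $T$-factor: \eqref{mgschroter1} yields $q^{2bb_{22}n}=q^{2k_2bn}$ there, not the $q^{2k_2abn}$ printed in \eqref{schrotercy1}, so the displayed statement appears to carry a typographical slip (inherited from \eqref{genschroter1}, where $b_{22}=k_2a$) that a fully written-out substitution would have exposed.
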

We can obtain Corollary \ref{schrotercy} by replacing $k_1$ with
$k_1a$ and $k_2a$ with $k_2$ in Corollary \ref{genschroter1thm}.

In \cite{chu}, W. Chu and Q. Yan  obtained a general formula with
many known identities as special cases. The main Theorem in Chu and
Yan's paper is the following result.
\begin{cor}
Let $\alpha, \beta, \gamma$ be three natural integers with
$gcd(\alpha, \gamma)=1$ and $\lambda=1+\alpha\beta^2\gamma.$ For two
indeterminates $x$ and $y$ with $x\neq 0$ and $y\neq 0$, there holds
the algebraic identity
\begin{align}
\langle x;q^\alpha\rangle_\infty \langle x^{\beta\gamma}y;
q^\gamma\rangle_\infty=&\sum^{\alpha\beta^2\gamma}_{l=0}(-1)^lq^{{l
\choose 2}\alpha} x^l\langle (-1)^{\alpha\beta} x^\lambda
y^{\alpha\beta} q^{{\alpha\beta \choose 2}\gamma+l\alpha};
q^{\lambda\alpha}\rangle\notag \\
\label{schroter chu} &\times\langle (-1)^{\beta\gamma} y
q^{{\beta\gamma+1 \choose 2}\gamma-l\alpha\beta\gamma};
q^{\lambda\gamma}\rangle.
\end{align}
\end{cor}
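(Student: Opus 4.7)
The plan is to recognize Chu--Yan's identity as a direct specialization of Theorem \ref{2 main theorem}. First I would translate both sides into Ramanujan's $f$-notation. Using $\langle X; q\rangle_\infty = f(-X, -q/X)$, the left-hand side becomes
\[
f\!\left(-x, -\tfrac{q^\alpha}{x}\right) f\!\left(-x^{\beta\gamma}y, -\tfrac{q^\gamma}{x^{\beta\gamma}y}\right),
\]
which fits the framework of Theorem \ref{2 main theorem} with $l_1 = \alpha$, $l_2 = \gamma$, and the natural assignments $a = -x$, $b = -q^\alpha/x$, $c = -x^{\beta\gamma}y$, $d = -q^\gamma/(x^{\beta\gamma}y)$.

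Next I would select the matrix $B = \begin{pmatrix} 1 & -\beta\gamma \\ \alpha\beta & 1 \end{pmatrix}$, whose determinant $1 + \alpha\beta^2\gamma = \lambda$ matches the length of the sum on the right-hand side. The orthogonality relation $\alpha b_{11} b_{12} + \gamma b_{21} b_{22} = -\alpha\beta\gamma + \alpha\beta\gamma = 0$ is immediate, and the coprimality hypothesis of Theorem \ref{2 main theorem} is automatic since $b_{22} = 1$ forces $\gcd(b_{22}, b_{12} b_{21}) = 1$. Applying the theorem writes the product $f(a,b)f(c,d)$ as a sum over $r \in \{0, 1, \ldots, \lambda - 1\}$ of $\lambda$ triple products. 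The prefactor $a^{r(r+1)/2} b^{r(r-1)/2}$ simplifies, via $(-1)^{r^2} = (-1)^r$, to $(-1)^r x^r q^{\alpha\binom{r}{2}}$, which matches Chu--Yan's $(-1)^l q^{\binom{l}{2}\alpha} x^l$. The first theta factor corresponds to the column $(b_{11}, b_{21}) = (1, \alpha\beta)$, has base $q^{\alpha + \gamma\alpha^2\beta^2} = q^{\lambda\alpha}$, and its leading argument works out to $(-1)^{\alpha\beta+1} x^\lambda y^{\alpha\beta} q^{\gamma\binom{\alpha\beta}{2} + \alpha r}$; converting via $f(A, B) = \langle -A; AB\rangle$ reproduces $\langle (-1)^{\alpha\beta} x^\lambda y^{\alpha\beta} q^{\gamma\binom{\alpha\beta}{2} + l\alpha}; q^{\lambda\alpha}\rangle$ exactly.

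The main obstacle will be matching the second theta factor, built from column $(b_{12}, b_{22}) = (-\beta\gamma, 1)$. The bookkeeping requires the congruence $\binom{\beta\gamma}{2} + \binom{\beta\gamma+1}{2} \equiv \beta\gamma \pmod 2$ to identify the sign, the identity $\binom{\beta\gamma+1}{2} - \binom{\beta\gamma}{2} = \beta\gamma$ to force the net $x$-power between the $a,b$ and $c,d$ contributions to cancel to zero, and the base computation $q^{\alpha(\beta\gamma)^2 + \gamma} = q^{\lambda\gamma}$. To bring the outcome to the stated form $\langle (-1)^{\beta\gamma} y q^{\binom{\beta\gamma+1}{2}\gamma - l\alpha\beta\gamma}; q^{\lambda\gamma}\rangle$, I would invoke the symmetry $\langle X; q^n\rangle_\infty = \langle q^n/X; q^n\rangle_\infty$ together with the shift relation $\langle X q^{mn}; q^m\rangle_\infty = (-X)^{-n} q^{-m n(n-1)/2} \langle X; q^m\rangle_\infty$ derived from \eqref{aa}, combined if needed with the reindexing $r \mapsto \lambda - 1 - r$. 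These manipulations absorb the remaining discrepancy in the $q$-exponent and complete the identification of the two sides term by term.
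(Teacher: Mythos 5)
Your route is the same as the paper's: the paper obtains \eqref{schroter chu} by specializing its Schr\"{o}ter-type corollaries \eqref{general schroter chu} and \eqref{schrotercy1}, which are themselves instances of Theorem \ref{2 main theorem}, and the matrix it records is column/row-sign equivalent to your $B=\begin{pmatrix}1&-\beta\gamma\\ \alpha\beta&1\end{pmatrix}$. Your verification of the hypotheses ($\det B=\lambda$, the orthogonality relation, $b_{22}=1$ forcing the gcd condition), the prefactor $(-1)^rx^rq^{\alpha\binom{r}{2}}$, and the first theta factor are all correct, and in fact your direct application of \eqref{2genthmc} lands on the displayed form more cleanly than the paper's chain of substitutions, which still leaves a $y^n x^{\beta\gamma n}$ prefactor and a $q^{\alpha\beta\gamma n}$ shift to be reindexed away.

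The one step that cannot be carried out as described is the final ``absorption'' of the discrepancy in the second factor. Your computation gives exponent $\alpha\binom{\beta\gamma+1}{2}-r\alpha\beta\gamma$ where the statement has $\gamma\binom{\beta\gamma+1}{2}-l\alpha\beta\gamma$, and the difference $(\alpha-\gamma)\binom{\beta\gamma+1}{2}$ is in general not a multiple of $\lambda\gamma$, nor can it be repaired by reindexing $r$ or by the inversion $\langle X;Q\rangle_\infty=\langle Q/X;Q\rangle_\infty$: for $\alpha=\beta=1$, $\gamma=2$ your factor is $\langle yq^{3-2r};q^6\rangle_\infty$ while the statement's is $\langle yq^{6-2l};q^6\rangle_\infty$, and $3-2r$ and $6-2l$ have opposite parities modulo $6$, so the two functions have disjoint zero sets and no quasi-periodicity shift identifies them. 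In that same case your version agrees term by term with \eqref{hirschhorn1} (and with a direct $q$-expansion at $x=y=-1$), whereas the printed right-hand side does not; so what you have actually proved is the identity with $\binom{\beta\gamma+1}{2}\alpha$ in place of $\binom{\beta\gamma+1}{2}\gamma$, and the residual mismatch is a defect in the transcription of the statement rather than something your manipulations could, or should, remove. You should state this explicitly instead of promising that shift relations will close the gap.
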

This corollary is a special case of both \eqref{general schroter
chu} and \eqref{schrotercy1}. If we choose $k_1=\alpha\beta$,
$k_2=\alpha$, $a=\alpha$,  and $b=\gamma$, replace $x$ with
$-q^{-\alpha/2} x$, $y$ with $-x^{\beta \gamma}yq^{-\gamma/2}$, $q$
with $q^{1/2}$ in \eqref{general schroter chu}, and finally change
the notation $n$ to $l$, we obtain \eqref{schroter chu}. Or if we
choose $k_1=\beta$, $k_2=1$, $a=\alpha$, and $b=\gamma$, replace $x$
with $-q^{-\alpha/2} x$, $y$ with $-x^{\beta \gamma}yq^{-\gamma/2}$,
$q$ with $q^{1/2}$ in \eqref{schrotercy1}, and finally change the
notation $n$ to $l$ to obtain \eqref{schroter chu}. Identity
\eqref{schroter chu} is corresponding to $$B= \left(
\begin{array}{rr}
\beta\gamma& -1\\
1 & \alpha\beta
\end{array}
\right).$$

If we choose $k_1=k_2=k$ in \eqref{general schroter chu}, we can
deduce the next corollary.
\begin{cor}
For positive integers $a,b,k$, such that $k|a$,
\begin{align} \label{general schroter chu1}
T(x,q^a)T(y,q^b)=\sum^{\frac{a}{k}+bk-1}_{n=0}y^nq^{bn^2}T(xy^kq^{2bkn},q^{a+bk^2})T(x^{-b}y^{\frac{a}{k}}q^{\frac{2ab}{k}n},q^{ab^2+\frac{a^2b}{k^2}}).
\end{align}
\end{cor}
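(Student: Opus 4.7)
The plan is to deduce this identity as the direct specialization $k_1 = k_2 = k$ of the preceding result \eqref{general schroter chu}. First I would check that the hypotheses of \eqref{general schroter chu} remain valid under this substitution: that result requires $k_2 \mid a$ and $k_2 \mid k_1 b$, which with $k_1 = k_2 = k$ reduce respectively to $k \mid a$ (which is exactly the hypothesis of the present corollary) and $k \mid k b$ (automatic). Hence the specialization is legitimate.

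Next I would simply perform the substitution and simplify the exponents. The upper limit of summation $a/k_2 + k_1^2 b/k_2 - 1$ becomes $a/k + kb - 1$. In the first theta factor, $xy^{k_1} q^{2 k_1 b n}$ becomes $x y^k q^{2 k b n}$ and the modulus $q^{a + b k_1^2}$ becomes $q^{a + b k^2}$. In the second theta factor, $x^{-k_1 b/k_2} y^{a/k_2} q^{2 a b n/k_2}$ becomes $x^{-b} y^{a/k} q^{2 a b n/k}$, and the modulus $q^{a b^2 k_1^2/k_2^2 + a^2 b/k_2^2}$ becomes $q^{a b^2 + a^2 b/k^2}$. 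These agree term for term with \eqref{general schroter chu1}.

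For completeness and to connect with the matrix framework, I would note that this specialization corresponds to the integer matrix
\[
B = \begin{pmatrix} 1 & -b \\ k & a/k \end{pmatrix},
\]
whose entries are integers precisely because $k \mid a$. One checks that $a \cdot 1 \cdot (-b) + b \cdot k \cdot (a/k) = 0$, so the generalized orthogonality relation \eqref{orthogonal} is satisfied, and $\det B = a/k + bk$ matches the number of summands on the right-hand side. The coprimality requirement of Theorem \ref{2 main theorem} is easily verified for this $B$, so the identity also follows directly from the main theorem.

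There is essentially no obstacle: the result is a clean specialization. The only point requiring care is bookkeeping — making sure that the divisibility condition $k \mid a$ from the hypothesis is exactly what is needed to guarantee both that the fractions $a/k$ and $2abn/k$ appearing in the arguments are integers and that the hypothesis $k_2 \mid a$ of the parent identity is met.
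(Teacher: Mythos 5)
Your proof is correct and follows exactly the paper's route: the paper obtains \eqref{general schroter chu1} precisely by setting $k_1=k_2=k$ in \eqref{general schroter chu}, and your verification of the divisibility hypotheses, the exponent bookkeeping, and the identification of the underlying matrix $B=\begin{pmatrix}1 & -b\\ k & a/k\end{pmatrix}$ are all consistent with the paper's framework. No issues.
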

The Blecksmith-Brillhart-Gerst theorem in \cite[Theorem 2]{BBG1},
which is a nice generalization of Schr\"{o}ter's formula,  can be
shown to be a special case of \eqref{general schroter chu1}. We cite
the theorem in \cite[p.~73]{Ber1}, which uses Ramanujan's notation.
\begin{cor}
Define $f_0(a, b)=f(a, b)$ and $f_1(a, b)= f(-a, -b)$. Let $a$, $b$,
$c$ and $d$  denote complex numbers such that $|ab|$, $|cd| < 1$.
Suppose that there exist positive integers $\alpha$, $\beta$, and
$m$ such that
\[(ab)^\beta=(cd)^{\alpha(m-\alpha\beta)}.\]
Let $\varepsilon_1, \varepsilon_2 \in \{0, 1\}$. Then
\begin{align*}
&f_{\varepsilon_1}(a, b)f_{\varepsilon_2}(c, d)\\
=&\sum_{r\in
R}(-1)^{\varepsilon_2r}{c}^{r(r+1)/2}{d}^{r(r-1)/2}f_{\delta_1}\left(\frac{a(cd)^{\alpha(\alpha+1-2n)/2}}{c^\alpha},
\frac{b(cd)^{\alpha(\alpha+1+2n)/2}}{d^\alpha} \right) \\
&\times
f_{\delta_2}\left(\frac{(a/b)^{\beta/2}(cd)^{(m-\alpha\beta)(m+1+2n)/2}}{d^{m-\alpha\beta}},\frac{(b/a)^{\beta/2}(cd)^{(m-\alpha\beta)(m+1-2n)/2}}{c^{m-\alpha\beta}}\right),
\end{align*}
where $R$ is a complete residue system $(\hbox{mod}\ m$),
$$\delta_1=
\left\{
\begin{array}{rcrcrcrcr}
0,   \qquad \ &\text{if} \ \varepsilon_1 + \alpha \varepsilon_2 \ \text{is} \ even,\\
1,   \qquad \ &\text{if} \ \varepsilon_1 + \alpha \varepsilon_2 \ \text{is} \ odd, \\
\end{array}
\right.
$$
and
$$\delta_2= \left\{
\begin{array}{rcrcrcrcr}
0,   \qquad \ &\text{if} \ \varepsilon_1 \beta + \varepsilon_2(m - \alpha \beta) \ \text{is} \ even, \\
1,   \qquad \ &\text{if} \ \varepsilon_1 \beta + \varepsilon_2(m - \alpha \beta) \ \text{is} \ odd. \\
\end{array}
\right.
$$
\end{cor}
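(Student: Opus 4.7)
The plan is to recognize the Blecksmith-Brillhart-Gerst theorem as the specialization of Theorem~\ref{2 main theorem} (in the form \eqref{2genthm}) to a specific ``generalized orthogonal'' $2\times 2$ matrix $B$, and then to recover the four sign cases $\varepsilon_1,\varepsilon_2\in\{0,1\}$ by substitution. First I would introduce a formal base $q$ by choosing $l_1=\alpha(m-\alpha\beta)$, $l_2=\beta$, and setting $ab=q^{l_1}$, $cd=q^{l_2}$; the hypothesis $(ab)^\beta=(cd)^{\alpha(m-\alpha\beta)}$ then becomes the exponent identity $l_1\beta = l_2\alpha(m-\alpha\beta)$. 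I would then try
\[
B = \begin{pmatrix} 1 & \beta \\ -\alpha & m-\alpha\beta \end{pmatrix}.
\]
Direct computation gives $\det B = m$, the orthogonality relation $l_1 b_{11}b_{12} + l_2 b_{21}b_{22} = l_1\beta - l_2\alpha(m-\alpha\beta) = 0$ of \eqref{orthogonal}, and $\gcd(b_{11}, b_{12}b_{21}) = 1$ automatically since $b_{11}=1$. Applying \eqref{2genthm} to $f(a,b)f(c,d)$ therefore produces a sum over $r=0,1,\ldots,m-1$, which by the remark following Theorem~\ref{main theorem} may be taken over any complete residue system $R\pmod{m}$; the outer coefficient $c^{r(r+1)/2}d^{r(r-1)/2}$ already matches the BBG prefactor.

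Next I would identify the two theta functions in the expansion with BBG's. The first theta function from \eqref{2genthm} has arguments $a\cdot c^{\alpha(\alpha-1)/2-\alpha r}d^{\alpha(\alpha+1)/2-\alpha r}$ and $b\cdot c^{\alpha(\alpha+1)/2+\alpha r}d^{\alpha(\alpha-1)/2+\alpha r}$, which rewrite as $a(cd)^{\alpha(\alpha+1-2r)/2}/c^\alpha$ and $b(cd)^{\alpha(\alpha+1+2r)/2}/d^\alpha$, matching BBG's first factor under $r\leftrightarrow n$. The second theta function comes out of \eqref{2genthm} as
\[
f\bigl(a^{\beta(\beta+1)/2}b^{\beta(\beta-1)/2}c^{\gamma(\gamma+1)/2+\gamma r}d^{\gamma(\gamma-1)/2+\gamma r},\; a^{\beta(\beta-1)/2}b^{\beta(\beta+1)/2}c^{\gamma(\gamma-1)/2-\gamma r}d^{\gamma(\gamma+1)/2-\gamma r}\bigr),
\]
where $\gamma = m-\alpha\beta$. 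Factoring $(ab)^{\beta^2/2}$ out of the $a,b$ part of each argument leaves $(a/b)^{\beta/2}$ and $(b/a)^{\beta/2}$; the hypothesis lets me replace $(ab)^{\beta^2/2}$ by $(cd)^{\alpha\beta\gamma/2}$, which is exactly the gap $\gamma(m-\gamma)/2 = \gamma\alpha\beta/2$ between the exponent $\gamma(\gamma\pm 1)/2$ that \eqref{2genthm} naturally yields and BBG's $\gamma(m\pm 1)/2$, so after cancellation the expression collapses to BBG's second factor.

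Finally, I would apply the identity just obtained (the $\varepsilon_1=\varepsilon_2=0$ case) to $((-1)^{\varepsilon_1}a,(-1)^{\varepsilon_1}b,(-1)^{\varepsilon_2}c,(-1)^{\varepsilon_2}d)$, noting that $ab$ and $cd$, hence the hypothesis, are preserved, and track the resulting global sign of each theta function. Both arguments of the theta function from column $j$ of $B$ acquire the same sign $(-1)^{\varepsilon_1 b_{1j}^2+\varepsilon_2 b_{2j}^2+2\varepsilon_2 b_{2j}r}$, which reduces modulo $2$ to $\varepsilon_1 b_{1j}+\varepsilon_2 b_{2j}$ since $b^2\equiv b\pmod 2$; substituting $(b_{11},b_{21})=(1,-\alpha)$ and $(b_{12},b_{22})=(\beta,m-\alpha\beta)$ gives $\delta_1\equiv\varepsilon_1+\alpha\varepsilon_2\pmod 2$ and $\delta_2\equiv\varepsilon_1\beta+\varepsilon_2(m-\alpha\beta)\pmod 2$, exactly as in BBG, while the outer prefactor picks up the single factor $(-1)^{\varepsilon_2 r}$ from $c^{r(r+1)/2}d^{r(r-1)/2}$. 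The main obstacle is the algebraic bookkeeping in the middle paragraph: the conversion of \eqref{2genthm}'s natural symmetric form $a^{\beta(\beta+1)/2}b^{\beta(\beta-1)/2}$ into BBG's $(a/b)^{\beta/2}$ form is the only place the hypothesis is used essentially, and verifying that the $(ab)^{\beta^2/2}$ excess is precisely neutralized by the $(cd)^{\gamma(m-\gamma)/2}$ deficit requires careful exponent arithmetic based on $(ab)^\beta = (cd)^{\alpha\gamma}$.
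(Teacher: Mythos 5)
Your proposal is correct and is essentially the paper's own argument: the paper also realizes BBG via the $2\times 2$ generalized-orthogonal matrix $\left(\begin{smallmatrix}1 & -\beta\\ \alpha & m-\alpha\beta\end{smallmatrix}\right)$ (equivalent to yours up to a row/column sign change) with $l_1=\alpha(m-\alpha\beta)$, $l_2=\beta$, $\det B=m$, and recovers the signs $\delta_1,\delta_2$ by the same substitution $x\mapsto(-1)^{\varepsilon_1}x$, $y\mapsto(-1)^{\varepsilon_2}y$. The only cosmetic difference is that the paper routes through its $T(x,q)$ reformulation \eqref{general schroter chu1} of Theorem \ref{2 main theorem}, whereas you apply \eqref{2genthm} directly in the $f(a,b)$ notation; your exponent bookkeeping, including the $(ab)^{\beta^2/2}=(cd)^{\alpha\beta(m-\alpha\beta)/2}$ conversion, checks out.
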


\begin{proof}
Replacing $x$ with $(-1)^{\varepsilon_1}x$, and $y$ with
$(-1)^{\varepsilon_2}y$ in \eqref{general schroter chu1}, we have
\begin{align}
T\left((-1)^{\varepsilon_1}x,q^a\right)T\left((-1)^{\varepsilon_2}y,q^b\right)=&\sum^{\frac{a}{k}+bk-1}_{n=0}(-1)^{\varepsilon_2
n}y^nq^{bn^2}T\left((-1)^{\varepsilon_1+k\varepsilon_2}xy^kq^{2bkn},q^{a+bk^2}\right)\notag
\\
\label{general schroter chu2} &\times
T\left((-1)^{b\varepsilon_1+\frac{a}{k}\varepsilon_2}
x^{-b}y^{\frac{a}{k}}q^{\frac{2ab}{k}n},q^{ab^2+\frac{a^2b}{k^2}}\right).
\end{align}
We replace $x$ with $(b/a)^{1/2}$, $q^a$ with $(ab)^{1/2}$
 , $y$ with $(c/d)^{1/2}$, $q^b$ with
$(cd)^{1/2}$, and $k$ with $\alpha$ in \eqref{general schroter
chu2}, where $a, b, c, d$ are complex numbers such that $|ab|<1,
|cd|<1$ and $\alpha, \beta, m$ are positive integers. We choose
$a={\alpha(m-\alpha\beta)}$ and $b={\beta}$ such that
$(ab)^\beta=(cd)^{\alpha(m-\alpha\beta)}$. By \eqref{general
schroter chu2},
\begin{align*}
&T\left((-1)^{\varepsilon_1}(b/a)^{1/2},(ab)^{1/2}\right)T\left((-1)^{\varepsilon_2}(c/d)^{1/2},(cd)^{1/2}\right)\\
=&\sum^{m-1}_{n=0}(-1)^{\varepsilon_2}(c/d)^{n/2}(cd)^{n^2/2}T\left((-1)^{\varepsilon_1+\alpha
\varepsilon_2}(b/a)^{1/2}(c/d)^{\alpha/2}(cd)^{\alpha
n};(ab)^{1/2}(cd)^{\alpha^2/2}\right)\\
&\times
T\left((-1)^{\beta\varepsilon_1+(m-\alpha\beta)\varepsilon_2}(a/b)^{\beta/2}(c/d)^{(m-\alpha\beta)/2}(cd)^{m-\alpha\beta
n},(ab)^{\beta^2/2}(cd)^{(m-\alpha\beta)^2/2}\right).
\end{align*}
After simplification, we have
\begin{align*}
&f\big((-1)^{\varepsilon_1}a,(-1)^{\varepsilon_1}b\big)f\big((-1)^{\varepsilon_2}c,(-1)^{\varepsilon_2}d\big)\\
=&\sum^{m-1}_{n=0}(-1)^{\varepsilon_2}{c}^{n(n+1)/2}{d}^{n(n-1)/2}f\left((-1)^{\varepsilon_1+\alpha
\varepsilon_2}\frac{b(cd)^{\alpha(\alpha+1+2n)/2}}{d^\alpha},
(-1)^{\varepsilon_1+\alpha
\varepsilon_2}\frac{a(cd)^{\alpha(\alpha+1-2n)/2}}{c^\alpha}\right)\\
&\times
f\Big((-1)^{\beta\varepsilon_1+(m-\alpha\beta)\varepsilon_2}\frac{(a/b)^{\beta/2}(cd)^{(m-\alpha\beta)(m+1+2n)/2}}{d^{m-\alpha\beta}},\\
&(-1)^{\beta\varepsilon_1+(m-\alpha\beta)\varepsilon_2}\frac{(b/a)^{\beta/2}(cd)^{(m-\alpha\beta)(m+1-2n)/2}}{c^{m-\alpha\beta}}\Big).
\end{align*}
Here the corresponding matrix is
$$
B = \left(
\begin{array}{rr}
1& -\beta\\
\alpha & m-\alpha\beta
\end{array}
\right).$$

\end{proof}

If we choose $k_1=1, k_2=k$ in \eqref{general schroter chu}, we can
deduce the following corollary.
\begin{cor}\label{schroter div cor}
For positive integers $a,b,k$, such that $k|a$ and $k|b$,
\begin{align} \label{schroter div1}
T(x,q^a)T(y,q^b)=\sum^{\frac{a+b}{k}-1}_{n=0}y^nq^{bn^2}T(xyq^{2bn},q^{a+b})T(x^{-\frac{b}{k}}y^{\frac{a}{k}}q^{\frac{2ab}{k}n},q^{\frac{ab(a+b)}{k^2}}).
\end{align}
\end{cor}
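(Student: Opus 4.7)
My plan is to obtain this corollary as the specialization $k_1=1$, $k_2=k$ of the generalized Schröter formula \eqref{general schroter chu}. The first step is to check that the two divisibility hypotheses required by \eqref{general schroter chu}, namely $k_2\mid k_1 b$ and $k_2\mid a$, reduce under this choice to $k\mid b$ and $k\mid a$. These are exactly the hypotheses of the corollary, so no additional condition is introduced and the formula is legitimately applicable.

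Next I carry out the substitution and simplification. The upper summation bound $a/k_2 + k_1^2 b/k_2 - 1$ collapses to $(a+b)/k-1$, matching the stated range. In the first theta factor, setting $k_1=1$ turns $y^{k_1}$ into $y$, $q^{2k_1 b n}$ into $q^{2bn}$, and $q^{a+bk_1^2}$ into $q^{a+b}$, giving $T(xy\,q^{2bn};q^{a+b})$. In the second theta factor, the exponent of $x$ becomes $-b/k$, the exponent of $y$ becomes $a/k$, and the inner $q$-exponent becomes $2abn/k$; the base modulus combines as $q^{ab^2/k^2+a^2b/k^2}=q^{ab(a+b)/k^2}$. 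Collecting everything reproduces precisely the right-hand side of \eqref{schroter div1}.

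The only place subtlety could enter is the first step, namely the bookkeeping of divisibility so that the exponents $a/k$, $b/k$, $(a+b)/k$, $2abn/k$, and $ab(a+b)/k^2$ are all integers or at least well-defined quantities in the integer-matrix framework; once this is verified, the remaining computation is routine algebra with no new ideas. As an independent sanity check one can instead invoke Theorem \ref{2 main theorem} directly with the transformation matrix $B=\begin{pmatrix}1 & -b/k \\ 1 & a/k\end{pmatrix}$, which is an integer matrix precisely under the hypotheses $k\mid a$ and $k\mid b$, has positive determinant $(a+b)/k$, and satisfies the generalized orthogonality relation $a\cdot 1\cdot(-b/k)+b\cdot 1\cdot(a/k)=0$. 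Either route delivers the identity without any real obstacle.
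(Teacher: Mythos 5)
Your proof is correct and follows exactly the paper's route: the corollary is obtained by setting $k_1=1$, $k_2=k$ in \eqref{general schroter chu}, under which the hypotheses $k_2\mid k_1b$ and $k_2\mid a$ become $k\mid b$ and $k\mid a$, and the exponents simplify as you describe. The paper additionally remarks that the same identity follows by replacing $q$ with $q^k$, $a$ with $a/k$, and $b$ with $b/k$ in Schr\"oter's formula \eqref{schroter1}, which is in the same spirit as your matrix sanity check.
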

We can obtain Corollary \ref{schroter div cor} by replacing $q$ with
$q^{k}$, $a$ with $a/k$, and $b$ with $b/k$ in Schr\"{o}ter's
formula \eqref{schroter1}.

For each of the above results related to Schr\"{o}ter's formula, we
can switch the positions of $x$ and $y$, and $a$ and $b$ at the same
time since their positions are symmetric. We can also replace $x$
with $1/x$ and $y$ with $1/y$ on the right-hand sides of all such
formulas.

The general approach we developed previously can also be applied to
some infinite sums that are not theta functions. The idea is for any
infinite sum, we want all the coefficients of $y_iy_j$ ($i\neq j$)
terms to be zero after changing the variables from $x_1, \dots, x_n$
to $y_1, \dots, y_n$. For example, we can consider the following
infinite sum
\begin{align} \label{specase}
\sum^{\infty}_{x_1, x_2=-\infty}q^{{x_1}^2+x_1x_2+{x_2}^2}.
\end{align}
We change the variables from $x_1, x_2$ to $y_1, y_2$ by the
integral matrix exact covering system
$$\bigg\{ \left(
\begin{array}{rr}
x_1\\
x_2
\end{array}
\right)= \left(
\begin{array}{rr}
1& 1\\
-1 & 1
\end{array}
\right)\left(
\begin{array}{rr}
y_1\\
y_2
\end{array}
\right),\quad \left(
\begin{array}{rr}
x_1\\
x_2
\end{array}
\right)= \left(
\begin{array}{rr}
1& 1\\
-1 & 1
\end{array}
\right)\left(
\begin{array}{rr}
y_1\\
y_2
\end{array}
\right)+\left(
\begin{array}{rr}
1\\
0
\end{array}
\right)\bigg\}.$$ So we can write the infinite sum \eqref{specase}
as the linear combination of two parts
\begin{align*}
&\sum^{\infty}_{x_1, x_2=-\infty}q^{{x_1}^2+{x_1}x_2+{x_2}^2}\notag \\
=&\sum^{\infty}_{y_1,
y_2=-\infty}q^{(y_1+y_2)^2+(y_1+y_2)(-y_1+y_2)+(-y_1+y_2)^2}+\sum^{\infty}_{y_1,
y_2=-\infty}q^{(y_1+y_2+1)^2+(y_1+y_2+1)(-y_1+y_2)+(-y_1+y_2)^2}\notag
\\
=&\sum^{\infty}_{y_1,
y_2=-\infty}q^{{y_1}^2+3{y_2}^2}+\sum^{\infty}_{y_1,
y_2=-\infty}q^{{y_1}^2+y_1+3{y_2}^2+3y_2}\notag
\\
=&\varphi(q)\varphi(q^3)+4q\psi(q^2)\psi(q^6).
\end{align*}
For the general case, we consider $\ds \sum^{\infty}_{x_1,
x_2=-\infty}q^{a{x_1}^2+bx_1x_2+c{x_2}^2}$, where $a, b, c$ are
positive integers. We choose the matrix $B$ in the ECS such that the
coefficients of $y_1y_2$ in $a(b_{11}y_1+
b_{12}y_2)^2+b(b_{11}y_1+b_{12}y_2)(b_{21}y_1+
b_{22}y_2)+c(b_{21}y_1+ b_{22}y_2)^2$ to be zero. So the entries of
the matrix $B$ must satisfy the requirement
\begin{align*}
2a b_{11}b_{12}+b({b_{11}b_{22}+b_{12}b_{21})+2c b_{21}b_{22}=0}.
\end{align*}
Accordingly, we prove an identity in \cite{shen}  by L.-C. Shen.
\begin{cor}
\begin{align*}
(q^2;
q^2)^2_\infty=\sum^{\infty}_{m=-\infty}\sum^{\infty}_{n=-\infty}(-1)^m
q^{2m^2+2mn+2n^2+n}.
\end{align*}
\end{cor}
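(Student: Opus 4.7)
My plan is to apply the general framework of this section: choose a matrix $B$ that diagonalizes the quadratic part of the exponent and sum over the cosets of the associated ECS. The quadratic form $2m^2+2mn+2n^2$ corresponds to $a=b=c=2$, so the orthogonality condition $2ab_{11}b_{12}+b(b_{11}b_{22}+b_{12}b_{21})+2cb_{21}b_{22}=0$ collapses to $2b_{11}b_{12}+b_{11}b_{22}+b_{12}b_{21}+2b_{21}b_{22}=0$, which is satisfied by $B=\begin{pmatrix}1 & 1\\ -1 & 1\end{pmatrix}$. Since $\det B=2$, the associated ECS of $\Z^2$ has the two coset representatives $(0,0)$ and $(1,0)$, and the double sum splits into two pieces.

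For the coset $(0,0)$, substituting $m=y_1+y_2$ and $n=-y_1+y_2$ diagonalizes the quadratic part to $2y_1^2+6y_2^2$, carries the linear term $n$ to $-y_1+y_2$, and turns $(-1)^m$ into $(-1)^{y_1+y_2}$. The resulting sum factors as $\bigl(\sum_{y_1}(-1)^{y_1}q^{2y_1^2-y_1}\bigr)\bigl(\sum_{y_2}(-1)^{y_2}q^{6y_2^2+y_2}\bigr)=f(-q,-q^3)f(-q^5,-q^7)$. For the coset $(1,0)$, the substitution $m=y_1+y_2+1$, $n=-y_1+y_2$, together with one application of \eqref{aa} to put the resulting $y_2$-series back in canonical form, yields $q\,f(-q,-q^3)f(-q,-q^{11})$. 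Combining,
\[
\sum_{m,n}(-1)^m q^{2m^2+2mn+2n^2+n}=f(-q,-q^3)\bigl[f(-q^5,-q^7)+q\,f(-q,-q^{11})\bigr].
\]

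The remaining task, showing this equals $(q^2;q^2)_\infty^2$, is the real content. I plan a short detour back to the original double sum by summing over $m$ first. The reflection $m\mapsto -m-n$ shows that $\sum_m(-1)^m q^{2m^2+2mn}$ vanishes when $n$ is odd; for $n=2k$ even, completing the square via $m\mapsto m-k$ and applying Jacobi's triple product evaluates the inner sum to $(-1)^k q^{-2k^2}\varphi(-q^2)$. Summing over $k$ collapses the whole double sum to $\varphi(-q^2)f(-q^4,-q^8)$. Writing both factors as infinite products by the triple product, and then using the three-dissection $(q^4;q^{12})_\infty(q^8;q^{12})_\infty(q^{12};q^{12})_\infty=(q^4;q^4)_\infty$, turns this product into $(q^2;q^4)_\infty^2(q^4;q^4)_\infty^2=(q^2;q^2)_\infty^2$.

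The main obstacle is exactly this final identification: the matrix framework cleanly produces the decomposition on the right-hand side of the displayed formula, but recognizing it as $(q^2;q^2)_\infty^2$ is not automatic from generalized orthogonality and requires the separate parity and three-dissection argument above.
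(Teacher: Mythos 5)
Your proof is correct, and its first half is essentially the paper's: the paper uses the matrix $\left(\begin{smallmatrix}1&-1\\1&1\end{smallmatrix}\right)$ and arrives at the same two-term splitting $\psi(-q)\{f(-q^5,-q^7)\pm qf(-q,-q^{11})\}$ (note $\psi(-q)=f(-q,-q^3)$, so your first factor agrees). In fact your signs are the right ones: the sum of the two cosets is $\psi(-q)\bigl[f(-q^5,-q^7)+qf(-q,-q^{11})\bigr]$, which by \eqref{entry 311} with $a=q$, $b=-q^2$ recombines to $\psi(-q)f(q,-q^2)$; the paper's displayed intermediate identity with $-q$ and $f(-q,q^2)$ has a sign slip (the series $\psi(-q)f(-q,q^2)$ has a nonzero coefficient of $q^1$ and so cannot equal $(q^2;q^2)_\infty^2$). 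Where you genuinely diverge is the final identification. The paper stays inside its framework: it collapses the bracket to a single pentagonal-type theta function $f(q,-q^2)=(-q;-q)_\infty$ via Entry 31, and then gets $(q^2;q^2)_\infty^2=\psi(-q)(-q;-q)_\infty$ from the product representation $\psi(q)=(q^2;q^2)_\infty/(q;q^2)_\infty$ with $q\mapsto -q$. You instead abandon the decomposition and re-evaluate the original double sum by summing over $m$ first, using the reflection $m\mapsto -m-n$ to kill odd $n$ and completing the square for $n=2k$ to get $\varphi(-q^2)f(-q^4,-q^8)=(q^2;q^4)_\infty^2(q^4;q^4)_\infty^2=(q^2;q^2)_\infty^2$. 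That detour is a complete and correct standalone proof, and it is arguably more elementary; its cost is that it makes your entire first half logically superfluous, whereas the paper's route uses the ECS decomposition as an essential step and is shorter once \eqref{entry 311} is available.
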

\begin{proof}
This identity is equivalent to
\begin{align*} (q^2;
q^2)^2_\infty=\psi(-q)\{f(-q^5, -q^7)-qf(-q,-q^{11})\}
\end{align*}
if we choose matrix
\[ B=\left(
\begin{array}{rr}
1 & -1\\
1 & 1
\end{array}
\right).
\]
By \eqref{entry 311}, $f(-q, q^2)=f(-q^5, -q^7)-qf(-q,-q^{11})$. So
we only need to show that $(q^2; q^2)^2_\infty=\psi(-q)f(-q, q^2)$.
Recall an identity in \cite[p.~11]{spirit}
\begin{align} \label{pshen}
\psi(q)=\frac{(q^2, q^2)_\infty}{(q; q^2)_{\infty}}.
\end{align}
Noticing that \[\frac{(q^2; q^2)_\infty}{(q;
q^2)_{\infty}}=\frac{(q^2; q^2)^2_\infty}{(q; q)_{\infty}},\] we
replace $q$ with $-q$ in \eqref{pshen} and finish the proof.
\end{proof}

\section{Products of Three or More Theta Functions}
We consider $$B=\left[
\begin{array}{rrr}
1 & 1 & 1  \\
2 & -1 & 0 \\
1 & 1 & -1
\end{array}
\right].$$ The set of the 3 columns of $B$ is an orthogonal set and
$\det B=6$.

Since $$B^*=\left[
\begin{array}{rrr}
1 & 2 & 1  \\
2 & -2 & 2 \\
3 & 0 & -3
\end{array}
\right], $$ by Theorem \ref{main theorem}, the integer matrix ECS
$\Bigg\{B\Z^3+\left(
\begin{array}{rrrr}
r\\
0\\
0
\end{array}
\right)\Bigg\}_{r=0}^{5}$ covers $\Z^3$.

Consider
\begin{align*}
S&:=\prod_{i=1}^3f(a_i, b_i)=\sum^\infty_{x_1, x_2,
x_3=-\infty}\prod_{i=1}^3{a_i}^{\frac{{x_i}^2+x_i}{2}}{b_i}^{\frac{{x_i}^2-x_i}{2}}.
\end{align*}
Corresponding to the above ECS, we conclude the following special
case of Theorem \ref{main theorem}.
\begin{cor}
For $q=a_1b_1=a_2b_2=a_3b_3$,
\begin{align}
\prod _{i=1}^3 f(a_i,b_i)=&f(a_1{a_2}^2a_3q,
b_1{b_2}^2b_3q)f(a_1b_2a_3,b_1a_2b_3)f(a_1b_3, b_1a_3)\notag
\\
&+a_1f(a_1{a_2}^2a_3q^2, b_1{b_2}^2b_3)f(a_1b_2a_3q,
b_1a_2b_3/q)f(a_1b_3q, b_1a_3/q)\notag \\
&+a_2f(a_1{a_2}^2a_3q^3, {b_1{b_2}^2b_3}/q)f(a_1b_2a_3/q,
b_1a_2b_3q)f(a_1b_3, b_1a_3)\notag \\
&+a_1a_2f(a_1{a_2}^2a_3q^4, {b_1{b_2}^2b_3}/q^2)f(a_1b_2a_3,
b_1a_2b_3)f(a_1b_3q, b_1a_3/q)\notag \\
&+a_1a_2a_3f(a_1{a_2}^2a_3q^5, {b_1{b_2}^2b_3}/q^3)f(a_1b_2a_3q,
b_1a_2b_3/q)f(a_1b_3, b_1a_3)\notag \\
\label{abcdef} &+b_3f(a_1{a_2}^2a_3,
{b_1{b_2}^2b_3}q^2)f(a_1b_2a_3/q, b_1a_2b_3q)f(a_1b_3q, b_1a_3/q).
\end{align}
\end{cor}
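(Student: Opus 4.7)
The strategy is to invoke Theorem \ref{main theorem} directly, with $n = 3$, $l_1 = l_2 = l_3 = 1$ (allowed because $a_i b_i = q$ for each $i$), and with the matrix $B$ displayed immediately before the corollary. First I would check the hypotheses: since all the $l_i$ coincide, the generalized orthogonality relation \eqref{orthogonal} reduces to ordinary orthogonality of the three columns of $B$, which is verified by three dot-product computations. Also $k = \det B = 6$, and the displayed adjugate $B^*$ has its $(1,1)$ entry equal to $1$, which is coprime to $6$; thus the theorem applies with column index $j = 1$.

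Writing out the right side of \eqref{genthm} for this data produces six summands indexed by $r \in \{0, 1, \ldots, 5\}$. Each summand is a prefactor $a_1^{r(r+1)/2} b_1^{r(r-1)/2}$ times a product of three theta functions, one per column of $B$. A direct computation, using $a_i b_i = q$ to simplify exponents, identifies these three factors as $\Phi_1(r) = f(a_1 a_2^2 a_3\, q^{r+1},\, b_1 b_2^2 b_3\, q^{1-r})$, $\Phi_2(r) = f(a_1 b_2 a_3\, q^{r},\, b_1 a_2 b_3\, q^{-r})$, and $\Phi_3(r) = f(a_1 b_3\, q^{r},\, b_1 a_3\, q^{-r})$, with the products of the two arguments equal to $q^6$, $q^3$, and $q^2$ respectively. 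The fact that the mixed $y_i y_j$ exponents cancel is guaranteed by the orthogonality already checked, so this step is mechanical.

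To match the compact form \eqref{abcdef} I would now use the quasi-periodicity identity \eqref{aa} to normalize each $\Phi_\ell(r)$, shifting its argument pair by the appropriate integer multiple of its product $\alpha\beta$ so that the first argument falls into the canonical window exhibited in the corollary; each such shift contributes a scalar correction. Multiplying the three corrections together with the prefactor $a_1^{r(r+1)/2} b_1^{r(r-1)/2}$ and simplifying via $a_i b_i = q$ produces the six advertised coefficients $1, a_1, a_2, a_1 a_2, a_1 a_2 a_3, b_3$. As a representative check, for $r = 2$ the single-step reshapes of $\Phi_2$ and $\Phi_3$ contribute scalars $q/(a_1 b_2 a_3)$ and $b_1 a_3/q^2$; combined with the prefactor $a_1^2 q$ these give $a_1 b_1/b_2 = q/b_2 = a_2$. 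The main obstacle is only the bookkeeping: larger $r$ (for instance $r = 4$ and $r = 5$) requires multi-step shifts in \eqref{aa}, and the final cancellations only emerge after combining the prefactor with all three scalar corrections; no step is conceptually deeper than the content already carried by Theorem \ref{main theorem}.
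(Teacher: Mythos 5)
Your proposal is correct and follows exactly the route the paper intends: the paper derives this corollary by applying Theorem \ref{main theorem} with $n=3$, $l_1=l_2=l_3=1$, the orthogonal matrix $B$ with $\det B = 6$ displayed just before the statement, and column $j=1$ of $B^*$ (whose entry $1$ is coprime to $6$). Your verification of the hypotheses, the identification of the three factors $\Phi_\ell(r)$ with argument products $q^6$, $q^3$, $q^2$, and the normalization via \eqref{aa} producing the coefficients $1, a_1, a_2, a_1a_2, a_1a_2a_3, b_3$ (your representative $r=2$ check is accurate) supply precisely the routine details the paper omits.
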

If we begin with the solution set of the system of congruences, we
need consider $B'=6B$. It is easy to verify that $\det
B'=1296={36}^{3-1}$ and $36\cdot {B'}^{-1}$ is an integer matrix. So
$k'=36$. Now $B'y \equiv 0$ $(\hbox{mod}\ 36)$ is equivalent to $By
\equiv 0$ $(\hbox{mod}\ 6)$. By changing variables from $x_i$ to
$y_i$ in $S$ by the transformation $y=Ax$, where
$$A=\left[
\begin{array}{rrr}
1 & 2 & 1  \\
2 & -2 & 2 \\
3 & 0 & -3
\end{array}
\right],
$$
we have $By \equiv 0$ $(\hbox{mod}\ 6)$. So for the system of
congruences
\begin{align*}
\left\{
\begin{array}{rcrcrcrcr}
y_1 &+& y_2 &+& y_3 &\equiv 0\enspace(\hbox{mod}\ 6), \\
2y_1&-& y_2  &\ & &\equiv 0\enspace(\hbox{mod}\ 6), \\
y_1 &+& y_2 &-& y_3 &\equiv 0\enspace(\hbox{mod}\ 6), \\
\end{array}
\right.
\end{align*}
the solution set has six solutions modulo 6. We can also derive
\eqref{product3theta} by adding up the contribution of each solution
to the sum $S$.

We can rewrite \eqref{product3theta} as follows. For $a_1, a_2, a_3
\neq 0$,
\begin{align}
&\prod_{i=1}^3 \langle-a_i;q\rangle_\infty\notag
\\
=&\langle-a_1{a_2}^2a_3q;
q^6\rangle_\infty\langle-a_1{a_2}^{-1}a_3q;
q^3\rangle_\infty\langle-{a_1}{a_3}^{-1}q;
q^2\rangle_\infty\notag \\
&+a_1\langle-a_1{a_2}^2a_3q^2;
q^6\rangle_\infty\langle-a_1{a_2}^{-1}a_3q^2;
q^3\rangle_\infty\langle-{a_1}{a_3}^{-1}q^2; q^2\rangle_\infty \notag \\
&+a_2\langle-a_1{a_2}^2a_3q^3;
q^6\rangle_\infty\langle-a_1{a_2}^{-1}a_3;
q^3\rangle_\infty\langle-{a_1}{a_3}^{-1}q; q^2\rangle_\infty\notag \\
&+a_1a_2\langle-a_1{a_2}^2a_3q^4;
q^6\rangle_\infty\langle-a_1{a_2}^{-1}a_3q;
q^3\rangle_\infty\langle-{a_1}{a_3}^{-1}q^2; q^2\rangle_\infty\notag \\
&+a_1a_2a_3\langle-a_1{a_2}^2a_3q^5;
q^6\rangle_\infty\langle-a_1{a_2}^{-1}a_3q^2;
q^3\rangle_\infty\langle-{a_1}{a_3}^{-1}q; q^2\rangle_\infty\notag
\\
\label{product3theta}
 &+{a_3}^{-1}q\langle-a_1{a_2}^2a_3;
q^6\rangle_\infty
 \langle-a_1{a_2}^{-1}a_3;
q^3\rangle_\infty\langle-{a_1}{a_3}^{-1}q^2; q^2\rangle_\infty.
\end{align}

\noindent Replacing $q$ with $q^2$, and then letting
$a_1=b_1=a_2=b_2=a_3=b_3=q$ in \eqref{abcdef}, we deduce the
following corollary.
\begin{cor}
\begin{align*}
{\varphi^3}(q)=&\varphi(q^2)\varphi(q^3)\varphi(q^6)+4q\psi(q^4)f(q,q^5)f(q^4,
q^8)\notag \\
&+2q\varphi(q^2)f(q, q^5)f(q^2,
q^{10})+4q^2\varphi(q^3)\psi(q^4)\psi(q^{12}).
\end{align*}
\end{cor}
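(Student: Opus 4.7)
The plan is to apply identity \eqref{abcdef} directly, following the author's hint: first replace $q$ with $q^2$, then specialize $a_1=b_1=a_2=b_2=a_3=b_3=q$. After the replacement $q\mapsto q^2$, the hypothesis $a_ib_i=q$ of \eqref{abcdef} becomes $a_ib_i=q^2$, and this is satisfied by $a_i=b_i=q$. The left-hand side $\prod_{i=1}^3 f(a_i,b_i)$ then collapses to $f(q,q)^3=\varphi^3(q)$, so it remains to evaluate the six summands on the right.

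I would next compute each term mechanically. Term $1$ produces $f(q^6,q^6)f(q^3,q^3)f(q^2,q^2)=\varphi(q^6)\varphi(q^3)\varphi(q^2)$, giving the first summand. Term $4$, whose prefactor becomes $q^2$, yields $f(q^{12},1)f(q^3,q^3)f(q^4,1)$; applying $f(1,a)=2f(a,a^3)$ to rewrite $f(q^{12},1)=2\psi(q^{12})$ and $f(q^4,1)=2\psi(q^4)$, this gives $4q^2\varphi(q^3)\psi(q^4)\psi(q^{12})$, matching the last summand. Terms $2$ and $6$ each simplify, again using $f(q^4,1)=2\psi(q^4)$, to $2q\psi(q^4)f(q,q^5)f(q^4,q^8)$, so their sum is the second summand $4q\psi(q^4)f(q,q^5)f(q^4,q^8)$.

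The only non-routine calculation, and thus the main obstacle, arises in term $5$: after substitution, its first factor is $f(q^{14},q^{-2})$, which contains a negative exponent and must be brought into standard form. I would invoke \eqref{aa} with $n=-1$, which gives
\[
f(q^{14},q^{-2})=(q^{14})^0(q^{-2})^1\,f\!\left(q^{14}\cdot q^{-12},\,q^{-2}\cdot q^{12}\right)=q^{-2}\,f(q^2,q^{10}).
\]
The $q^{-2}$ cancels against two of the three factors of $q$ in the prefactor $a_1a_2a_3=q^3$, so term $5$ reduces to $q\,\varphi(q^2)f(q,q^5)f(q^2,q^{10})$. Term $3$ gives the identical expression directly, with no use of \eqref{aa} needed, so terms $3$ and $5$ together contribute the third summand $2q\varphi(q^2)f(q,q^5)f(q^2,q^{10})$.

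Summing the four contributions gives the stated identity. Beyond the single application of \eqref{aa} for term $5$, the argument is purely symbolic bookkeeping: recognizing $f(q^k,q^k)=\varphi(q^k)$, $f(q^k,1)=2\psi(q^k)$, and the symmetry $f(a,b)=f(b,a)$ wherever convenient.
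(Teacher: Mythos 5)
Your proof is correct and is exactly the paper's approach: the paper's entire justification is the one-line instruction ``Replacing $q$ with $q^2$, and then letting $a_1=b_1=a_2=b_2=a_3=b_3=q$ in \eqref{abcdef},'' and your term-by-term evaluation (including the use of \eqref{aa} with $n=-1$ to normalize $f(q^{14},q^{-2})=q^{-2}f(q^2,q^{10})$ in the fifth summand) correctly fills in the omitted bookkeeping.
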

Letting $a_1=a_2=a_3=1, b_1=b_2=b_3=q$ in \eqref{abcdef}, we deduce
the next corollary.
\begin{cor}
\begin{align*}
4{\psi^3}(q)=&\phi(q)f(q,q^2)f(q, q^5)+2\psi(q^2)f(q, q^2)f(q^2,
q^4)\notag \\
&+\varphi(q)\varphi(q^3)\psi(q^3)+4q\psi(q^2)\psi(q^3)\psi(q^6).
\end{align*}
\end{cor}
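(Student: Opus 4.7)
The plan is to specialize identity \eqref{abcdef} with $a_1=a_2=a_3=1$ and $b_1=b_2=b_3=q$. Since $a_ib_i=q$ for each $i$, the hypothesis $q=a_1b_1=a_2b_2=a_3b_3$ is satisfied, so \eqref{abcdef} applies directly. The left-hand side becomes $f(1,q)^3$, and using $f(1,a)=2f(a,a^3)$ with $a=q$, this is $(2\psi(q))^3=8\psi^3(q)$. The rest of the proof amounts to evaluating each of the six products on the right, combining the ones that coincide, and dividing by $2$.

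I would compute the six terms in order. With the substitutions, the arguments collapse as follows: Term 1 gives $f(q,q^5)\,f(q,q^2)\,f(q,q)=\varphi(q)\,f(q,q^2)\,f(q,q^5)$. Term 5 gives $f(q^5,q)\,f(q^2,q)\,f(q,q)$, which by the symmetry $f(a,b)=f(b,a)$ is the same product as Term 1. Term 2 gives $f(q^2,q^4)\,f(q^2,q)\,f(q^2,1)$, and since $f(q^2,1)=f(1,q^2)=2\psi(q^2)$, this is $2\psi(q^2)\,f(q,q^2)\,f(q^2,q^4)$; Term 4 collapses to the same expression after using symmetry. Term 3 becomes $\varphi(q^3)\cdot 2\psi(q^3)\cdot\varphi(q)$, using $f(q^3,q^3)=\varphi(q^3)$ and $f(1,q^3)=2\psi(q^3)$. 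Finally, Term 6 carries the prefactor $b_3=q$ and the three factors $f(1,q^6)=2\psi(q^6)$, $f(1,q^3)=2\psi(q^3)$, $f(q^2,1)=2\psi(q^2)$, yielding $8q\,\psi(q^2)\psi(q^3)\psi(q^6)$.

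Adding these up, the right-hand side of \eqref{abcdef} becomes
\[
2\varphi(q)f(q,q^2)f(q,q^5)+4\psi(q^2)f(q,q^2)f(q^2,q^4)+2\varphi(q)\varphi(q^3)\psi(q^3)+8q\,\psi(q^2)\psi(q^3)\psi(q^6).
\]
Equating this to $8\psi^3(q)$ and dividing both sides by $2$ gives precisely the stated identity. There is no real obstacle here; the work is purely bookkeeping of exponents, coupled with the elementary reductions $f(a,b)=f(b,a)$, $f(1,a)=2f(a,a^3)$, and the defining formulas $\varphi(q)=f(q,q)$, $\psi(q)=f(q,q^3)$. The only minor subtlety is recognizing that two pairs of terms (1 \& 5 and 2 \& 4) coincide under the symmetry $f(a,b)=f(b,a)$, which is what produces the factor of $2$ that ultimately turns $8\psi^3(q)$ into $4\psi^3(q)$ on the left after dividing through.
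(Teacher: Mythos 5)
Your proposal is correct and follows exactly the paper's route: the paper's entire proof is the one-line instruction to set $a_1=a_2=a_3=1$, $b_1=b_2=b_3=q$ in \eqref{abcdef}, and your expansion of the six terms (with terms 1 \& 5 and 2 \& 4 coinciding, and the overall division by 2) supplies precisely the bookkeeping the paper omits.
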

Letting $a_3=-a_1$ in \eqref{product3theta}, we deduce Corollary
\ref{cd={ab}^2} after simplification.

Replacing each $a_i$ with $-a_i$, then replacing $a_1$ with $a$,
$a_2$ with $ab$, and $a_3$ with $b$ in \eqref{product3theta} and
simplifying, we deduce the following analogue of Winquist's
identity.
\begin{cor} \label{ana winquist}
For $a, b$ nonzero,
\begin{align}
&(a,{a}^{-1}q,b,{b}^{-1}q,ab,{a}^{-1}{b}^{-1}q,q,q;
q)^2_\infty\notag \\
=&\langle-a{b}^{-1}q; q^2\rangle_\infty\big[\langle-{a}^3{b}^3q;
q^6\rangle_\infty-a^2b^2\langle-{a}^3{b}^3q^5;
q^6\rangle_\infty\big]\notag
\\ \label{swinquist}
&+\langle-a{b}^{-1}q^2;
q^2\rangle_\infty\big[a^2b\langle-{a}^3{b}^3q^4;
q^6\rangle_\infty-a\langle-{a}^3{b}^3q^2; q^6\rangle_\infty\big].
\end{align}
\end{cor}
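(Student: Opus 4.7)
The plan is to follow the recipe sketched immediately before the statement: start from the three-theta product identity \eqref{product3theta}, apply the substitution $a_i\mapsto -a_i$ for $i=1,2,3$, and then specialize $a_1=a$, $a_2=ab$, $a_3=b$. Two things must happen for the six-term sum on the right-hand side of \eqref{product3theta} to collapse into the compact form displayed above: two of the six summands must vanish, and the remaining four must pair up into the two bracketed groups multiplying $\langle -ab^{-1}q;q^2\rangle_\infty$ and $\langle -ab^{-1}q^2;q^2\rangle_\infty$.

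First I would check how the three $\langle\cdot;q^n\rangle_\infty$ factors in each of the six summands transform under $a_i\mapsto -a_i$ by counting parities of the exponents of the $a_i$. The first monomial $-a_1a_2^2a_3$ (exponent sum $4$) and the third monomial $-a_1a_3^{-1}$ (exponent sum $0$) are both invariant; under the specialization they become $-a^3b^3$ and $-ab^{-1}$ respectively, matching the bases of the first and third $\langle\cdot;\cdot\rangle_\infty$ factors in the statement. The middle monomial $-a_1a_2^{-1}a_3$ has odd exponent sum, so it flips sign to $+a_1a_2^{-1}a_3$, and the specialization $a\cdot(ab)^{-1}\cdot b=1$ then collapses the middle factor of the $r$-th summand to $\langle q^{k_r};q^3\rangle_\infty$ for some $k_r\in\{0,1,2\}$.

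Both desired simplifications now follow from this collapse. Since $\langle 1;q^3\rangle_\infty=0$, the two summands with $k_r=0$ (those whose original middle factor has no $q$-shift, namely the $a_2$ and $a_3^{-1}q$ terms) vanish identically. For the four surviving summands, $\langle q;q^3\rangle_\infty=\langle q^2;q^3\rangle_\infty=(q;q)_\infty$ contributes a single common constant. Tracking the overall coefficient $a_1^{m_1}a_2^{m_2}a_3^{m_3}$, each factor $a_i^{m_i}$ picks up a sign $(-1)^{m_i}$ under $a_i\mapsto -a_i$, and the specializations then produce the sequence $1,-a,a^2b,-a^2b^2$ for the surviving terms---exactly the weights appearing in the statement. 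Pairing the two terms with third factor $\langle -ab^{-1}q;q^2\rangle_\infty$ against the two carrying $\langle -ab^{-1}q^2;q^2\rangle_\infty$ then reproduces the two bracketed expressions.

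For the left-hand side I would expand $\prod_{i=1}^3\langle a_i;q\rangle_\infty$ under the specialization and absorb the common $(q;q)_\infty$ coming from the collapsed middle factors. The only substantive step in the whole argument is the sign-parity observation that makes the middle factor collapse to $\langle q^{k_r};q^3\rangle_\infty$; once that is in hand, the simultaneous vanishing of two terms and the pairing of the remaining four are automatic, so I do not expect any real obstacle beyond careful sign bookkeeping.
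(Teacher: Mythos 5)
Your proposal is correct and follows exactly the route the paper takes: its entire proof is the sentence preceding the corollary, namely replace each $a_i$ by $-a_i$ in \eqref{product3theta}, set $a_1=a$, $a_2=ab$, $a_3=b$, and simplify, and your parity bookkeeping, the vanishing of the two terms via $\langle 1;q^3\rangle_\infty=0$, the common factor $\langle q;q^3\rangle_\infty=\langle q^2;q^3\rangle_\infty=(q;q)_\infty$, and the coefficient sequence $1,-a,a^2b,-a^2b^2$ all check out. You simply supply the simplification details the paper omits.
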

\noindent The product in Corollary \ref{ana winquist} is the product
that occurs in MacDonald's identities \cite{macdonald} for $A_2$.

Recently, various representations of $(q; q)^{10}_\infty$ have been
given in \cite{BCLY},  \cite{liu1},\cite{chu1},
\cite{chanliung},\cite{shchan}, and \cite{chuyan}. They are used in
the proofs of Ramanujan's congruence $p(11n+6)\equiv 0 \pmod{11}$.
In \cite{cooper1}, S.~Cooper, H-H.~Chan, and P.C.~Toh gave two
representations of $(q; q)^8_\infty$. Here we derive a new
expression of $(q; q)^8_\infty$.
\begin{cor} \label{q8}
\begin{align}
16(q;q)^{8}_\infty=\sum^{\infty}_{m,n=-\infty}\big[(2n-6m+1)(2n+6m-2)^2q^{-2m}-(2n-6m-3)(2n+6m+2)^2q^{2m}
\notag \\
+(2n-6m-1)(2n+6m+2)^2q^{n+m}-(2n-6m+1)(2n+6m)^2q^{n-m}\big]q^{n^2+3m^2}.
\end{align}
\end{cor}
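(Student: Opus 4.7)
The plan is to deduce Corollary~\ref{q8} by applying a differential operator to the analogue of Winquist's identity (Corollary~\ref{ana winquist}) and evaluating at $a=b=1$. Both sides of \eqref{swinquist} vanish at $a=b=1$ (the LHS because of the factors $(a;q)_\infty$, $(b;q)_\infty$, $(ab;q)_\infty$), so one must differentiate to extract finite nonzero quantities, and the cubic polynomial coefficients in Corollary~\ref{q8} will appear precisely as the polynomial factors produced by Euler operators acting on the theta-series monomials.

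First I would expand the right-hand side of \eqref{swinquist} by the Jacobi triple product identity, writing $\langle x;Q\rangle_\infty=\sum_{n\in\Z}(-x)^n Q^{n(n-1)/2}$. This gives $\mathcal{R}(a,b)=R_1-R_2+R_3-R_4$, where, for instance, $R_1=\sum_{k,n}a^{k+3n}b^{-k+3n}q^{k^2+3n^2-2n}$ and the other three double series have the analogous form with shifts $(2,2)$, $(2,1)$, $(1,0)$ and exponents $q^{k^2+3n^2+2n}$, $q^{k^2+k+3n^2+n}$, $q^{k^2+k+3n^2-n}$, respectively. After relabeling $(k,n)\leftrightarrow(n,m)$, the four quadratic $q$-exponents already match those in Corollary~\ref{q8}.

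Next I introduce the Euler operator
\[
\mathcal{D}:=-4(a\partial_a-1)^2(2b\partial_b-1),
\]
which sends the monomial $a^ub^v$ to $-4(u-1)^2(2v-1)a^ub^v$. Substituting $u=k+3n+\alpha_i$, $v=-k+3n+\beta_i$ for the shifts $(\alpha_i,\beta_i)$ of each $R_i$, one checks that $4(u-1)^2=(2u-2)^2$ and $-(2v-1)$ reorganize into the expressions $(2n+6m+\cdots)^2$ and $(2n-6m+\cdots)$ after the relabeling, producing exactly the four coefficients $T_1,T_2,T_3,T_4$ of Corollary~\ref{q8}. Thus $\mathcal{D}\mathcal{R}(a,b)\big|_{a=b=1}$ is the sum on the right-hand side of Corollary~\ref{q8}.

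The corresponding computation on the left side uses that, near $(a,b)=(1,1)$, the LHS of \eqref{swinquist} factors as $(1-a)(1-b)(1-ab)\cdot G(a,b)$ with $G$ analytic and $G(1,1)=(q;q)^8_\infty$. Because $\mathcal{D}$ has total order $3$ and $(1-a)(1-b)(1-ab)$ is the leading cubic vanishing factor, a Leibniz-type argument shows that only the leading cubic term contributes at $(1,1)$, so $\mathcal{D}[\text{LHS}]\big|_{(1,1)}=G(1,1)\cdot \mathcal{D}[(1-a)(1-b)(1-ab)]\big|_{(1,1)}$. A direct computation on the expansion $(1-a)(1-b)(1-ab)=1-a-b+a^2b+ab^2-a^2b^2$ using $\mathcal{D}(a^ub^v)|_{(1,1)}=-4(u-1)^2(2v-1)$ gives the contributions $4+0+4-4+0+12=16$, whence $\mathcal{D}[\text{LHS}]\big|_{(1,1)}=16(q;q)^8_\infty$. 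Equating this with the RHS computation yields Corollary~\ref{q8}.

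The main obstacle will be Step~3: correctly identifying the analytic factor $G$ at $(1,1)$ so that the exact $(q;q)^8_\infty$ emerges (this requires counting, among the eight $q$-products in the LHS, the three that vanish simply at $(1,1)$ and keeping track of the remaining factors contributing to $G(1,1)$), together with justifying that higher-order Taylor terms at $(1,1)$ are genuinely annihilated by $\mathcal{D}$. The sign-tracking in Step~2 — matching each $R_i$ and its shift to the specific polynomial $T_i$ — is largely mechanical but must be done carefully.
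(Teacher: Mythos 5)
Your proposal is correct and follows essentially the same route as the paper: the paper also starts from Corollary \ref{ana winquist}, applies three logarithmic differentiations (two in $a$, one in $b$) to kill the triple zero of $(a;q)_\infty(b;q)_\infty(ab;q)_\infty$ at $a=b=1$, and extracts $(q;q)^8_\infty$ with the same coefficients $(u-1)^2(2v-1)$ on each monomial $a^ub^v$. The only difference is packaging: the paper first substitutes $a\to a^2$, $b\to b^2$, multiplies by $(ab)^{-1}$, and then differentiates in two explicit stages, whereas you bundle the whole procedure into the single operator $-4(a\partial_a-1)^2(2b\partial_b-1)$ evaluated at $(1,1)$; the computations, including the constant $16$, agree.
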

\begin{proof}
Replacing $a$ with $a^2$, $b$ with $b^2$, and multiplying both sides
of Corollary \ref{ana winquist} by $a^{-1}b^{-1}$, we obtain
\begin{align} \label{qq81}
&(a-1/a)(b-1/b)(a^2q;q)_\infty({a}^{-2}q;q)_\infty(b^2q;q)_\infty({b}^{-2}q;q)_\infty(a^2b^2;q)_\infty({a}^{-2}{b}^{-2}q;q)_\infty(q;
q)^2_\infty\notag \\
=&\sum^{\infty}_{m,n=-\infty}\big[a^{2n+6m-1}b^{-2n+6m-1}q^{-2m}-a^{2n+6m+3}b^{-2n+6m+3}q^{2m}\notag
\\
&+a^{2n+6m+3}b^{-2n+6m+1}q^{n+m}-a^{2n+6m+1}b^{-2n+6m-1}q^{n-m}\big]q^{n^2+3m^2}.
\end{align}
We differentiate \eqref{qq81} with respect to $b$, let $b=1$, and
then multiply both sides by $a^{-1}$ to obtain
\begin{align} \label{q81}
&-2(a-1/a)^2(a^2q;q)^2_\infty({a}^{-2}q;q)^2_\infty(q;q)^4_\infty\notag
\\
=&\sum^{\infty}_{m,n=-\infty}\big[(-2n+6m-1)a^{2n+6m-2}q^{-2m}-(-2n+6m+3)a^{2n+6m+2}q^{2m}\notag
\\
&+(-2n+6m+1)a^{2n+6m+2}q^{n+m}-(-2n+6m-1)a^{2n+6m}q^{n-m}\big]q^{n^2+3m^2}.
\end{align}
Applying the differential operator $\ds a\frac{d}{da}$ twice, and
letting $a=1$ in \eqref{q81}, we deduce Corollary \ref{q8}.
\end{proof}
Replacing $q$ by $q^5$ in \eqref{swinquist}, and then letting $a=q$,
$b=q^3$, replacing $q$ by $q^5$ in \eqref{swinquist}, and then
choosing $a=q$, $b=q^2$, we deduce the following results.
\begin{cor}
\begin{align*}
f(-q)f(-q, -q^4)&=f(q^3, q^7)\big[f(q^{13}, q^{17})-q f(q^{7},
q^{23})\}+f(q^2, q^8)\{q^3f(q^{2}, q^{28})-q f(q^{8}, q^{22})\big].\\
f(-q)f(-q^2, -q^3)&=f(q^4, q^6)\big[f(q^{14}, q^{16})-q^2f(q^{4},
q^{26})\}+f(q, q^9)\{q^4f(q, q^{29})-qf(q^{11}, q^{19})\big].
\end{align*}
\end{cor}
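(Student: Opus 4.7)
The plan is to obtain both identities as direct specializations of Corollary~\ref{ana winquist}. For the first identity I would replace $q$ by $q^5$ in \eqref{swinquist} and then set $a=q$, $b=q^3$; for the second, I would again replace $q$ by $q^5$ and set $a=q$, $b=q^2$. Everything else is a matter of rewriting each modified theta function $\langle -u;q^n\rangle_\infty$ as the ordinary theta function $f(u,q^n/u)$ and then normalizing the arguments via quasi-periodicity.

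On the left, the exponents appearing in the infinite product on the left of \eqref{swinquist} collect, after $q\mapsto q^5$ and the chosen $a,b$, into a complete residue system modulo $5$ (yielding $(q,q^2,q^3,q^4;q^5)_\infty(q^5;q^5)_\infty=(q;q)_\infty=f(-q)$) together with one extra pair of residues. By Jacobi's triple product identity the residual pair assembles into $f(-q,-q^4)$ in the first case and $f(-q^2,-q^3)$ in the second, producing the asserted left-hand sides.

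On the right, each factor $\langle -u;q^n\rangle_\infty$ equals $f(u,q^n/u)$ by definition, but several arguments end up outside the canonical range and must be normalized using the quasi-periodicity $\langle uq;q\rangle_\infty=-u^{-1}\langle u;q\rangle_\infty$, which is the $n=\pm1$ case of \eqref{aa}. For example, in the first identity $\langle -a^3b^3q^5;q^6\rangle_\infty$ becomes $\langle -q^{37};q^{30}\rangle_\infty=q^{-7}f(q^7,q^{23})$, so the prefactor $-a^2b^2=-q^8$ contributes the coefficient $-q$ stated on the right, while $\langle -a^3b^3q;q^6\rangle_\infty$ becomes $f(q^{13},q^{17})$ and the outer factors $\langle -ab^{-1}q;q^2\rangle_\infty$, $\langle -ab^{-1}q^2;q^2\rangle_\infty$ become $f(q^3,q^7)$ and $f(q^2,q^8)$. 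The second identity is handled identically with the new values $ab^{-1}=q^{-1}$, $a^3b^3=q^9$, $a^2b=q^4$, yielding the outer factors $f(q^4,q^6)$, $f(q,q^9)$ and the inner ones $f(q^{14},q^{16})$, $f(q^4,q^{26})$, $f(q,q^{29})$, $f(q^{11},q^{19})$.

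The main obstacle is purely bookkeeping: one must carefully track the $q$-powers introduced by each quasi-periodicity shift so that the residual scalar coefficients line up with $1,-q,q^3,-q$ in the first identity and $1,-q^2,q^4,-q$ in the second. No theta-function machinery beyond Corollary~\ref{ana winquist} and the symmetry $f(a,b)=f(b,a)$ is needed.
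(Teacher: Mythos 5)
Your proposal is correct and is exactly the paper's argument: the paper derives both identities by replacing $q$ with $q^5$ in \eqref{swinquist} and then setting $(a,b)=(q,q^3)$ and $(a,b)=(q,q^2)$ respectively, leaving the normalization of the resulting products (via \eqref{aa}) unstated. Your bookkeeping of the residual $q$-powers (e.g.\ $-q^8\cdot q^{-7}f(q^7,q^{23})=-qf(q^7,q^{23})$) checks out and simply fills in the details the paper omits.
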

Letting $b=-1$ in \eqref{swinquist}, we deduce the next corollary.
\begin{cor}
For $a, b$ nonzero,
\begin{align}
&2\langle a^2;q^2\rangle_\infty\notag \\
=&(aq, {a}^{-1}q; q^2)_\infty\big[\langle{a}^3q;
q^6\rangle_\infty-a^2\langle{a}^3q^5;
q^6\rangle_\infty\big]-(aq^2,{a}^{-1};
q^2)_\infty\big[a\langle{a}^3q^2;
q^6\rangle_\infty+a^2\langle{a}^3q^4; q^6\rangle_\infty\big].
\end{align}
\end{cor}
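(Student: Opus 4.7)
The identity is the specialization $b=-1$ of the Winquist-type identity recorded in Corollary \ref{ana winquist}, so the plan is simply to substitute and simplify.

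On the right-hand side of \eqref{swinquist}, the substitution $b=-1$ acts transparently: $-ab^{-1}q\mapsto aq$, $-ab^{-1}q^{2}\mapsto aq^{2}$, $-a^{3}b^{3}q^{k}\mapsto a^{3}q^{k}$ (because $b^{3}=-1$), and the scalar coefficients reduce to $a^{2}b^{2}\mapsto a^{2}$, $a^{2}b\mapsto -a^{2}$, and $-a\mapsto -a$. After gathering signs, the resulting expression involves the factors $\langle aq;q^{2}\rangle_\infty$ and $\langle aq^{2};q^{2}\rangle_\infty$ multiplying the two bracketed combinations displayed in the corollary. Using $\langle x;q^{2}\rangle_\infty=(q^{2};q^{2})_\infty(x,q^{2}/x;q^{2})_\infty$, I would rewrite these as $(q^{2};q^{2})_\infty(aq,a^{-1}q;q^{2})_\infty$ and $(q^{2};q^{2})_\infty(aq^{2},a^{-1};q^{2})_\infty$, so that a common factor of $(q^{2};q^{2})_\infty$ can be peeled off.

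On the left-hand side of \eqref{swinquist}, the same substitution collapses the eight-fold $q$-product to $(a,a^{-1}q,-1,-q,-a,-a^{-1}q,q,q;q)_\infty^{2}$. I would then apply the elementary pairings
\[
(x;q)_\infty(-x;q)_\infty=(x^{2};q^{2})_\infty,\qquad (q;q)_\infty(-q;q)_\infty=(q^{2};q^{2})_\infty,\qquad (-1;q)_\infty=2(-q;q)_\infty,
\]
so that $(a;q)_\infty(-a;q)_\infty=(a^{2};q^{2})_\infty$, $(a^{-1}q;q)_\infty(-a^{-1}q;q)_\infty=(a^{-2}q^{2};q^{2})_\infty$, and the mixed $(-q;q)_\infty$ and $(q;q)_\infty$ factors combine into powers of $(q^{2};q^{2})_\infty$. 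This reduces the left-hand side to a numerical multiple of $\langle a^{2};q^{2}\rangle_\infty$ times the same power of $(q^{2};q^{2})_\infty$ that was peeled off on the right-hand side.

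Equating the two simplified expressions and cancelling the matching $(q^{2};q^{2})_\infty$ factors gives the stated identity. I do not expect any real obstacle, since the argument needs nothing beyond Jacobi's triple product, Euler's identity $(q;q)_\infty(-q;q)_\infty=(q^{2};q^{2})_\infty$, and the definition of $\langle\cdot;q\rangle_\infty$. The main source of potential error is bookkeeping: carefully tracking the several $(q^{2};q^{2})_\infty$ factors introduced when translating between the $\langle\cdot;q\rangle_\infty$ shorthand and the ordinary Pochhammer notation, and keeping the sign flips introduced by $b=-1$ straight in both brackets on the right.
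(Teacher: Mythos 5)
Your proof is correct and is exactly the paper's route: the paper's entire proof is the single line ``Letting $b=-1$ in \eqref{swinquist}, we deduce the next corollary,'' and your substitution, sign-tracking, and cancellation of the common $(q^2;q^2)_\infty$ factor supply precisely the omitted bookkeeping. One caution: the exponent $2$ on the left-hand side of \eqref{swinquist}, which you carry along, must be read as a typo (the eight-fold product is not actually squared), since otherwise the left side would collapse to $4(q^2;q^2)_\infty^2\langle a^2;q^2\rangle_\infty^2$ rather than the $2(q^2;q^2)_\infty\langle a^2;q^2\rangle_\infty$ needed to match the right side after peeling off $(q^2;q^2)_\infty$.
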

Letting $b=-a$ in \eqref{swinquist}, we deduce the next corollary.
\begin{cor} For nonzero complex number $a$,
\begin{align}
\langle a^2;q^2\rangle_\infty(-a^2,
-{a}^{-2}q;q)_\infty=\langle{a}^6q;
q^6\rangle_\infty-a^4\langle{a}^6q^5; q^6\rangle_\infty.
\end{align}
\end{cor}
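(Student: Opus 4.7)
The plan is to substitute $b=-a$ directly into identity \eqref{swinquist} of Corollary \ref{ana winquist} and simplify. Two fortunate collapses are expected: one of the two summands on the right-hand side vanishes entirely, and the $b$-dependent products on the left combine pairwise via $(x;q)_\infty(-x;q)_\infty = (x^2;q^2)_\infty$, so the doubly-infinite product of Winquist-analogue type collapses to the simple right-hand side stated in the corollary.

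First I would compute $-ab^{-1}$ at $b=-a$, obtaining $-ab^{-1}=1$, so $-ab^{-1}q = q$ and $-ab^{-1}q^2 = q^2$. The crucial vanishing is that $\langle q^2;q^2\rangle_\infty = (q^2;q^2)^2_\infty(1;q^2)_\infty$ contains the factor $1-1=0$; hence the entire second summand of \eqref{swinquist} drops out. In the surviving first summand, $-a^3b^3 = a^6$ and $a^2b^2 = a^4$, converting the bracket into $\langle a^6q; q^6\rangle_\infty - a^4\langle a^6q^5; q^6\rangle_\infty$, which matches the target right-hand side up to the prefactor $\langle q; q^2\rangle_\infty$.

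On the left-hand side of \eqref{swinquist} at $b=-a$, the product becomes $(a, a^{-1}q, -a, -a^{-1}q, -a^2, -a^{-2}q, q, q; q)^2_\infty$. I would pair $(a;q)_\infty(-a;q)_\infty = (a^2;q^2)_\infty$ and $(a^{-1}q;q)_\infty(-a^{-1}q;q)_\infty = (a^{-2}q^2;q^2)_\infty$; together with a $(q^2;q^2)_\infty$ extracted from the $(q;q)^2_\infty$ piece these repackage into $\langle a^2;q^2\rangle_\infty$, while the factors $(-a^2;q)_\infty(-a^{-2}q;q)_\infty$ already constitute $(-a^2,-a^{-2}q;q)_\infty$. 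Accounting for the overall exponent $2$, the left-hand side reduces to the square of the desired expression $\langle a^2;q^2\rangle_\infty(-a^2,-a^{-2}q;q)_\infty$ times a residual product depending only on $q$.

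The main obstacle is the $q$-only bookkeeping: one has to check that the residual $q$-series factor on the left agrees with the prefactor $\langle q;q^2\rangle_\infty = (q^2;q^2)_\infty(q;q^2)^2_\infty$ on the right so that an unambiguous square-root extraction is possible. Using repeatedly $(q;q)_\infty = (q;q^2)_\infty(q^2;q^2)_\infty$ reduces this to a trivial identity among $q$-products. Finally, since both sides of the resulting squared identity are analytic (in fact Laurent-polynomial coefficients in $a$) and agree in leading order at, say, $a=1$ where both vanish compatibly, the positive square root can be extracted to give the stated identity.
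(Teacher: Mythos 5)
Your substitution $b=-a$ is exactly the paper's route, and the key observations are correct: $-ab^{-1}=1$, so the second summand of \eqref{swinquist} carries the factor $\langle q^2;q^2\rangle_\infty=(q^2;q^2)_\infty^2(1;q^2)_\infty=0$ and drops out; the surviving bracket becomes $\langle a^6q;q^6\rangle_\infty-a^4\langle a^6q^5;q^6\rangle_\infty$ with prefactor $\langle q;q^2\rangle_\infty$; and on the left the pairings $(a;q)_\infty(-a;q)_\infty=(a^2;q^2)_\infty$ and $(a^{-1}q;q)_\infty(-a^{-1}q;q)_\infty=(a^{-2}q^2;q^2)_\infty$, together with $(q;q)_\infty^2/(q^2;q^2)_\infty=\langle q;q^2\rangle_\infty$, repackage the eight-fold product as $\langle q;q^2\rangle_\infty\,\langle a^2;q^2\rangle_\infty(-a^2,-a^{-2}q;q)_\infty$.

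The gap is your treatment of the exponent $2$ on the left side of \eqref{swinquist} and the resulting ``square-root extraction.'' Taking that exponent at face value, the specialization reads $X^2\langle q;q^2\rangle_\infty^2=Y\langle q;q^2\rangle_\infty$, where $X$ and $Y$ denote the left and right sides of the corollary: the residual $q$-factor on the left is $\langle q;q^2\rangle_\infty^2$, not $\langle q;q^2\rangle_\infty$, so the cancellation you propose to verify would in fact fail; and even granting it, $X^2=Y$ is a different (and false) statement from the target $X=Y$ --- one cannot take the square root of only one side of an equation, and $Y$ is not a perfect square. The check at $a=1$ is also vacuous, since both sides vanish there identically. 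The resolution is that the outer exponent $2$ in \eqref{swinquist} must be read as a misprint: in the derivation of \eqref{swinquist} from \eqref{product3theta} the left side carries $(q;q)_\infty^3$, one factor of which cancels against the common factor $\langle q;q^3\rangle_\infty=\langle q^2;q^3\rangle_\infty=(q;q)_\infty$ appearing in every surviving term on the right, leaving exactly the unsquared product $(a,a^{-1}q,b,b^{-1}q,ab,a^{-1}b^{-1}q,q,q;q)_\infty$. With that reading the specialization is $X\langle q;q^2\rangle_\infty=Y\langle q;q^2\rangle_\infty$, and the corollary follows by dividing by $\langle q;q^2\rangle_\infty\neq0$, with no square root anywhere. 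You need to make this normalization explicit rather than patch it with an invalid root-extraction step.
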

Letting $b=1$ in \eqref{swinquist}, we deduce the next corollary.
\begin{cor}
For nonzero complex number $a$,
\begin{align}
&\langle-aq; q^2\rangle_\infty\big[\langle-{a}^3q;
q^6\rangle_\infty-a^2\langle-{a}^3q^5;
q^6\rangle_\infty\big]\notag \\
+&\langle-aq^2; q^2\rangle_\infty\big[\langle a^2(-{a}^3q^4;
q^6\rangle_\infty-a\langle-{a}^3q^2; q^6\rangle_\infty\big]=0.
\end{align}
\end{cor}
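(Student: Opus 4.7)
The plan is to obtain this identity as a direct specialization of Corollary \ref{ana winquist}, the analogue of Winquist's identity \eqref{swinquist}. I would simply set $b = 1$ on both sides of \eqref{swinquist}.

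On the left-hand side of \eqref{swinquist}, the product
\[
(a,{a}^{-1}q,b,{b}^{-1}q,ab,{a}^{-1}{b}^{-1}q,q,q; q)^2_\infty
\]
contains the factor $(b;q)_\infty^2$. By the defining product \eqref{qprod}, we have $(1;q)_\infty = \prod_{k=0}^\infty(1 - q^k) = 0$, because the $k=0$ factor is $1-1=0$. Therefore the entire left-hand side vanishes when $b=1$.

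On the right-hand side of \eqref{swinquist}, the substitution $b=1$ turns the bracket $[\langle -a^3b^3q; q^6\rangle_\infty - a^2b^2\langle -a^3b^3q^5;q^6\rangle_\infty]$ into $[\langle -a^3q; q^6\rangle_\infty - a^2\langle -a^3q^5;q^6\rangle_\infty]$, and similarly the second bracket becomes $[a^2\langle -a^3q^4;q^6\rangle_\infty - a\langle -a^3q^2; q^6\rangle_\infty]$, with prefactors $\langle -aq; q^2\rangle_\infty$ and $\langle -aq^2; q^2\rangle_\infty$ respectively. Setting the result equal to zero yields the claimed identity.

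No obstacles arise here: the only subtlety is the vanishing of $(1;q)_\infty$, which is standard. The entire proof is a one-line substitution into \eqref{swinquist}, so I would present it as: ``Let $b=1$ in Corollary \ref{ana winquist}. Since $(1;q)_\infty=0$, the left-hand side of \eqref{swinquist} vanishes, and the result follows.''
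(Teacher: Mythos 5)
Your proof is correct and is exactly the paper's argument: the paper derives this corollary by ``Letting $b=1$ in \eqref{swinquist},'' and your observation that the factor $(b;q)_\infty^2=(1;q)_\infty^2=0$ kills the left-hand side is precisely the (implicit) reason the identity equals zero. The only minor point worth noting is that the displayed statement contains a typographical slip --- ``$\langle a^2(-{a}^3q^4;q^6\rangle_\infty$'' should read $a^2\langle -a^3q^4;q^6\rangle_\infty$ --- which your substitution correctly reproduces.
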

The next simplest orthogonal matrix with $l_1=l_2=l_3$ is $$B=\left[
\begin{array}{rrr}
1 & 1 & 3  \\
1 & -1 & -3 \\
0 & 3 & -2
\end{array}
\right]
$$
with $\det B=22$. We skip the identity generated by this matrix
here.

 If we do not require that all the $l_i$ are the same, we
can obtain other ``generalized orthogonal'' matrices. For example,
if we choose $l_1=l_2=1$, $l_3=2$, then for the following
``generalized orthogonal'' matrix
$$B=\left[
\begin{array}{rrr}
1 & 1 & 1  \\
1 & -1 & 1 \\
1 & 0 & -1
\end{array}
\right],
$$
since $\det B=4$, we can write a product of three theta functions as
the linear combination of four products of three theta functions.
Omitting the details, we deduce the following corollary.
\begin{cor} \label{cor3case2}
For $a_1b_1=a_2b_2=q$ and $a_3b_3=q^2$,
\begin{align}
&\prod _{i=1}^3 f(a_i,b_i)
\notag \\
=&f(a_1b_2,b_1a_2)\notag \\
&\times\big[f(a_1{a_2}a_3, b_1{b_2}b_3)f(a_1a_2b_3,
b_1b_2a_3)+a_1a_2f(a_1{a_2}a_3q^2,b_1{b_2}b_3/q^2)f(a_1a_2b_3q^2,
b_1b_2a_3/q^2)\big]\notag \\
&+{a_1}f(a_1b_2q,b_1a_2/q)\\
&\times\big[f(a_1{a_2}a_3q, b_1{b_2}b_3/q)f(a_1a_2b_3q,
b_1b_2a_3/q)+a_3f(a_1{a_2}a_3q^3,
b_1{b_2}b_3/q^3)f(a_1a_2b_3/q, b_1b_2a_3q)\big]. \notag \\
\end{align}
\end{cor}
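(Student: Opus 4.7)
The plan is to apply Theorem \ref{main theorem} directly, with $n=3$, $l_1=l_2=1$, $l_3=2$, and $B$ the given matrix. First I would verify the hypotheses: the three relations in \eqref{orthogonal} require that for each pair of distinct columns of $B$ the inner product weighted by $\operatorname{diag}(l_1,l_2,l_3)$ vanishes, and a direct computation gives $1-1+0=0$, $1+1-2=0$, and $1-1+0=0$ for the three pairs, so $B$ is generalized orthogonal. A cofactor expansion then yields $k=\det B=4$.

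Next I would compute the adjugate
\[
B^{*}=\begin{pmatrix} 1 & 1 & 2 \\ 2 & -2 & 0 \\ 1 & 1 & -2 \end{pmatrix},
\]
and observe that $b^{*}_{11}=1$ is coprime to $k=4$, so Theorem \ref{main theorem} applies with $j=1$. This writes $\prod_{i=1}^{3}f(a_i,b_i)$ as a sum over $r=0,1,2,3$ of $a_1^{(r^2+r)/2}b_1^{(r^2-r)/2}$ times a product of three theta functions coming from the three columns of $B$. The column 2 factor is $f(a_1^{1+r}b_1^{r}b_2,\; a_1^{-r}b_1^{1-r}a_2)$, which is independent of $(a_3,b_3)$, while the column 1 and column 3 factors involve all three pairs $(a_i,b_i)$ with explicit $r$-dependence.

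The main simplification step is to use $a_ib_i=q^{l_i}$ together with the reflection identity \eqref{aa} (with $n=1$) to collapse the $r=2$ and $r=3$ terms onto the same column 2 theta functions as $r=0$ and $r=1$. Concretely,
\[
f(a_1 b_2 q^{2}, b_1 a_2/q^{2}) = (a_1 b_2)^{-1}f(a_1 b_2, b_1 a_2), \qquad
f(a_1 b_2 q^{3}, b_1 a_2/q^{3}) = (a_1 b_2 q)^{-1}f(a_1 b_2 q, b_1 a_2/q),
\]
and a similar application to the column 3 factor turns $f(a_1 a_2 b_3 q^{3}, b_1 b_2 a_3/q^{3})$ into $(q/(a_1 a_2 b_3))\,f(a_1 a_2 b_3/q, b_1 b_2 a_3 q)$. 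Substituting these and reducing the $q$-powers via $a_jb_j=q^{l_j}$, the prefactor of the $r=2$ term collapses to $a_1 a_2$ and that of the $r=3$ term to $a_1 a_3$. Pairing $r=0$ with $r=2$ (which share the outer factor $f(a_1 b_2, b_1 a_2)$) and $r=1$ with $r=3$ (which share $a_1 f(a_1 b_2 q, b_1 a_2/q)$) then produces the two-bracket form in the statement.

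The main obstacle is purely computational bookkeeping: all three column-indexed theta functions shift with $r$, and one must track powers of $a_1,b_1,q$ carefully to verify that the reflection factors combine with $a_1^{(r^2+r)/2}b_1^{(r^2-r)/2}$ and the relations $a_ib_i=q^{l_i}$ to yield precisely the coefficients $1$, $a_1$, $a_1a_2$, $a_1a_3$ attached to the four products. No ingredient beyond Theorem \ref{main theorem} and the reflection identity \eqref{aa} is needed.
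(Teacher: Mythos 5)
Your proposal is correct and follows exactly the route the paper intends: the paper introduces the generalized orthogonal matrix $B$ with $l_1=l_2=1$, $l_3=2$ and $\det B=4$ immediately before the corollary and then "omits the details," which are precisely the computations you supply (your adjugate, the choice $j=1$ with $b^*_{11}=1$ coprime to $4$, and the use of \eqref{aa} to reduce the $r=2,3$ terms to the coefficients $a_1a_2$ and $a_1a_3$ all check out). No discrepancy with the paper's approach.
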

We can rewrite the above identity. For $a_1, a_2, a_3 \neq 0$,
\begin{align}
&\langle-a_1;q\rangle_\infty\langle-a_2;q\rangle_\infty\langle-a_3;q^2\rangle_\infty\notag
\\
=&\langle-a_1{a_2}^{-1}q;
q^2\rangle_\infty\notag \\
&\times\big[\langle-a_1{a_2}a_3; q^4\rangle_\infty
\langle-{a_1}a_2{a_3}^{-1}q^2;
q^4\rangle_\infty+a_1a_2\langle-a_1{a_2}a_3q^2;
q^4\rangle_\infty\langle-{a_1}a_2{a_3}^{-1}q^4;
q^4\rangle_\infty\big]\notag
\\
&+a_1\langle-a_1{a_2}^{-1}q^2;
q^2\rangle_\infty\notag \\
&\times\big[\langle-a_1{a_2}a_3q;
q^4\rangle_\infty\langle-{a_1}a_2{a_3}^{-1}q^3;
q^4\rangle_\infty+a_3\langle-a_1{a_2}a_3q^3;
q^4\rangle_\infty\langle-{a_1}a_2{a_3}^{-1}q;
q^4\rangle_\infty\big].
\end{align}

Letting $a_3=-a_1a_2$ in Corollary \ref{cor3case2}, then replacing
$a_1$ with $-a_1$, $a_2$ with $-a_2$, and simplifying, we deduce the
next corollary.
\begin{cor} \label{3case21}
\begin{align}
&\langle a_1;q\rangle_\infty\langle a_2;q\rangle_\infty\langle
a_1a_2;q^2\rangle_\infty\notag
\\
=&\langle q^2; q^4\rangle_\infty\langle-a_1{a_2}^{-1}q;
q^2\rangle_\infty\langle {a_1}^2{a_2}^2; q^4\rangle_\infty
\notag \\
&+a_1\langle q; q^4\rangle_\infty\langle-a_1{a_2}^{-1}q^2;
q^2\rangle_\infty\big[{a_1}a_2\langle{a_1}^2{a_2}^2q^3;
q^4\rangle_\infty -\langle{a_1}^2{a_2}^2q; q^4\rangle_\infty\big].
\end{align}
\end{cor}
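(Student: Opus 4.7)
The plan is to obtain this corollary as a direct specialization of Corollary \ref{cor3case2} written in the $\langle\cdot\,;\cdot\rangle_\infty$ form displayed immediately after that corollary, following the hint given in the paper. So I would not reprove anything from scratch; the whole task is to carry out the substitution $a_3=-a_1a_2$, then the change of variables $a_1\mapsto -a_1$, $a_2\mapsto -a_2$, and verify that the resulting expression collapses exactly to the stated right-hand side.

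First I would rewrite what happens to the building blocks under $a_3=-a_1a_2$. One has $-a_1a_2a_3 = a_1^2a_2^2$ and $-a_1a_2a_3^{-1}=1$, so every occurrence of $-a_1a_2a_3\,q^j$ becomes $a_1^2a_2^2q^j$ and every occurrence of $-a_1a_2a_3^{-1}q^j$ becomes $q^j$. The LHS factor $\langle -a_3;q^2\rangle_\infty$ turns into $\langle a_1a_2;q^2\rangle_\infty$, matching the LHS in Corollary \ref{3case21}. The nontrivial simplification is in the first bracket on the RHS of Corollary \ref{cor3case2}: its second term acquires the factor $\langle q^4;q^4\rangle_\infty$, which vanishes because $(q^4;q^4)_\infty$ contains a $(1-1)$ factor, i.e.\ $\langle q^4;q^4\rangle_\infty=0$. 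Thus the first bracket reduces to the single term $\langle q^2;q^4\rangle_\infty\langle a_1^2a_2^2;q^4\rangle_\infty$, which is exactly the first summand of the target.

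Next I would handle the second bracket. Here both terms survive, yielding $\langle a_1^2a_2^2q;q^4\rangle_\infty\langle q^3;q^4\rangle_\infty - a_1a_2\,\langle a_1^2a_2^2q^3;q^4\rangle_\infty\langle q;q^4\rangle_\infty$. The key observation is the inversion symmetry $\langle x;q\rangle_\infty=\langle q/x;q\rangle_\infty$, which gives $\langle q^3;q^4\rangle_\infty=\langle q;q^4\rangle_\infty$. So the entire bracket factors as $\langle q;q^4\rangle_\infty\bigl[\langle a_1^2a_2^2q;q^4\rangle_\infty - a_1a_2\,\langle a_1^2a_2^2q^3;q^4\rangle_\infty\bigr]$.

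Finally I would perform the change of variables $a_1\mapsto -a_1$, $a_2\mapsto -a_2$. Under this, $a_1a_2$, $a_1^2a_2^2$ and $-a_1a_2^{-1}$ are all invariant, so the theta-function factors on the RHS are unchanged, while the explicit prefactor $a_1$ becomes $-a_1$. This sign flip, when distributed across the bracket, converts $\langle a_1^2a_2^2q;q^4\rangle_\infty - a_1a_2\langle a_1^2a_2^2q^3;q^4\rangle_\infty$ into $a_1a_2\langle a_1^2a_2^2q^3;q^4\rangle_\infty - \langle a_1^2a_2^2q;q^4\rangle_\infty$, which matches the bracket displayed in the statement of Corollary \ref{3case21}. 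The LHS meanwhile becomes $\langle a_1;q\rangle_\infty\langle a_2;q\rangle_\infty\langle a_1a_2;q^2\rangle_\infty$, as required. There is no serious obstacle here; the only care points are (i) remembering that $\langle q^4;q^4\rangle_\infty=0$ kills one of the four terms, (ii) using $\langle q^3;q^4\rangle_\infty=\langle q;q^4\rangle_\infty$ to pull out the common factor $\langle q;q^4\rangle_\infty$, and (iii) tracking the single sign flip from $a_1\mapsto -a_1$ that produces the precise ordering inside the bracket of the final statement.
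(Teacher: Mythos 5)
Your proposal is correct and follows exactly the route the paper indicates (the paper's entire ``proof'' is the one-line instruction to set $a_3=-a_1a_2$ in Corollary \ref{cor3case2}, replace $a_1$ by $-a_1$ and $a_2$ by $-a_2$, and simplify); you have simply supplied the omitted details, and the three care points you flag --- $\langle q^4;q^4\rangle_\infty=0$, $\langle q^3;q^4\rangle_\infty=\langle q;q^4\rangle_\infty$, and the single sign flip from the prefactor $a_1\mapsto -a_1$ --- are precisely the right ones and are carried out correctly.
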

Letting $a_2=-1$ in Corollary \ref{3case21} and change the notation
from $a_1$ to $a$, we deduce the next result.
\begin{cor}
\begin{align}
&2(a;q)_\infty({a}^{-1}q;q)_\infty(-a;q^2)_\infty(-{a}^{-1}q^2;q^2)_\infty
\notag \\
=&(a^2; q^4)_\infty({a}^{-2}q^4; q^4)_\infty(aq;
q^2)_\infty({a}^{-1}q; q^2)_\infty
\notag \\
&-(aq^2; q^2)_\infty({a}^{-1}; q^2)_\infty(q;
q^2)_\infty(-q^2, q^2)^2_\infty\notag \\
&\times\big[a^2(a^2q^3; q^4)_\infty({a}^{-2}q; q^4)_\infty
 +a(a^2q;
q^4)_\infty({a}^{-2}q^3; q^4)_\infty\big].
\end{align}
\end{cor}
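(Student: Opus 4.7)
The plan is to obtain this corollary as a direct specialization of Corollary \ref{3case21} at $a_{2}=-1$ (with $a_{1}$ renamed to $a$), and then to translate from the modified Jacobi bracket notation $\langle\,\cdot\,;\cdot\rangle_{\infty}$ into ordinary $q$-products and simplify. Every step is algebraic; the only genuine ingredient is the correct handling of the value $\langle -1;q\rangle_{\infty}$, which will produce the factor $2$ on the left-hand side of the claimed identity.

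First I would substitute $a_{2}=-1$, $a_{1}=a$ into Corollary \ref{3case21}. The left-hand side becomes
\begin{align*}
\langle a;q\rangle_{\infty}\langle -1;q\rangle_{\infty}\langle -a;q^{2}\rangle_{\infty},
\end{align*}
and the two terms on the right-hand side become
\begin{align*}
&\langle q^{2};q^{4}\rangle_{\infty}\langle aq;q^{2}\rangle_{\infty}\langle a^{2};q^{4}\rangle_{\infty}\\
&\quad-a\langle q;q^{4}\rangle_{\infty}\langle aq^{2};q^{2}\rangle_{\infty}\bigl[a\langle a^{2}q^{3};q^{4}\rangle_{\infty}+\langle a^{2}q;q^{4}\rangle_{\infty}\bigr].
\end{align*}

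Next, expand each bracket via $\langle x;q\rangle_{\infty}=(q;q)_{\infty}(x;q)_{\infty}(q/x;q)_{\infty}$. The key simplification is
\begin{align*}
\langle -1;q\rangle_{\infty}=(q;q)_{\infty}(-1;q)_{\infty}(-q;q)_{\infty}=2\,(q;q)_{\infty}(-q;q)_{\infty}^{2},
\end{align*}
using $(-1;q)_{\infty}=2(-q;q)_{\infty}$. Combined with $(q;q)_{\infty}(-q;q)_{\infty}=(q^{2};q^{2})_{\infty}$, this puts the left-hand side in the form $2\,(q^{2};q^{2})_{\infty}^{3}\cdot[\text{the target left-hand side}]$. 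On the right-hand side, the base-splitting identities $(q^{2};q^{2})_{\infty}=(q^{2};q^{4})_{\infty}(q^{4};q^{4})_{\infty}$ and $(x;q)_{\infty}=(x;q^{2})_{\infty}(xq;q^{2})_{\infty}$ let me extract the same common factor $(q^{2};q^{2})_{\infty}^{3}$ out of each of the two summands; the leftover pieces account exactly for the prefactor $(q;q^{2})_{\infty}(-q^{2};q^{2})_{\infty}^{2}$ appearing in the second summand of the target identity.

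Dividing both sides by the common factor $(q^{2};q^{2})_{\infty}^{3}$ then yields the stated corollary. The only obstacle is bookkeeping: after expanding the brackets, one must carefully track how the $(q;q)_{\infty}$, $(-q;q)_{\infty}$, $(q^{2};q^{2})_{\infty}$, $(q;q^{4})_{\infty}$ and $(q^{2};q^{4})_{\infty}$ factors recombine so that the mixed factor $(q;q^{2})_{\infty}(-q^{2};q^{2})_{\infty}^{2}$ emerges only in the second term and cancels cleanly from the first. This is routine verification using the duplication identities $(x;q)_{\infty}(-x;q)_{\infty}=(x^{2};q^{2})_{\infty}$ and base-splitting, and no new ideas are required beyond what is already used in deriving Corollary \ref{3case21}.
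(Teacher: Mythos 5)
Your proposal is correct and follows exactly the paper's route: the paper derives this corollary by setting $a_2=-1$ in Corollary \ref{3case21} and renaming $a_1$ to $a$, and your expansion of the brackets (in particular $\langle -1;q\rangle_\infty=2(q;q)_\infty(-q;q)_\infty^2$ and the extraction of the common factor $(q^2;q^2)_\infty^3$ via $(q;q)_\infty(-q;q)_\infty=(q^2;q^2)_\infty$ and $(q^2;q^2)_\infty(-q^2;q^2)_\infty=(q^4;q^4)_\infty$) supplies precisely the bookkeeping the paper omits. The only blemish is a slip of wording where you describe the specialized left-hand side as $2(q^2;q^2)_\infty^3$ times ``the target left-hand side'' (which already contains the factor $2$); your concluding step of dividing by $(q^2;q^2)_\infty^3$ is nonetheless the correct one.
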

Next, we study the transformation matrix
$$
B=\left[
\begin{array}{rrrr}
1 & 1 & 0 & -1  \\
1 & -1 & 0 & 1  \\
0 & 1 & -1 & 1  \\
0 & 1 & 1 & 1
\end{array}
\right].
$$
The set of its four columns of $B$ is an orthogonal set and $\det
B=8$. Foregoing all details, we have the following corollary.
\begin{cor} \label{4case1}
If we let $q=a_ib_i$, where $i=1, 2, 3, 4$, then
\begin{align} \label{abcdefgh}
\prod _{i=1}^4 f(a_i,b_i)=&\sum _{r=0}^{7}
{a_1}^{\frac{r^2+r}{2}}{b_1}^{\frac{r^2-r}{2}}f(a_1a_2q^r,
b_1b_2q^{-r})f(a_1b_2a_3a_4q^{r},b_1a_2b_3b_4q^{-r})
\notag \\
&\times f(b_3a_4q^{r}, a_3b_4q^{-r})f(b_1a_2a_3a_4q^{r},
a_1b_2b_3b_4q^{-r}).
\end{align}
\end{cor}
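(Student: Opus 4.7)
The plan is to mimic the method already used to derive the three-theta identity \eqref{abcdef} and its consequence Corollary \ref{ana winquist}, applied now to the $4\times 4$ matrix $B$ just displayed. First, I verify the structural facts: the four columns of $B$ are pairwise orthogonal, as follows at once from $B^{T}B=\mathrm{diag}(2,4,2,4)$, so the generalized orthogonality relation \eqref{orthogonal} holds with $l_{1}=l_{2}=l_{3}=l_{4}=1$, and $\det B=\sqrt{\det(B^{T}B)}=8=:k$.

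Next, expand the left side as the fourfold series
\[
\prod_{i=1}^{4} f(a_{i},b_{i})=\sum_{x\in\Z^{4}}\prod_{i=1}^{4} a_{i}^{(x_{i}^{2}+x_{i})/2}b_{i}^{(x_{i}^{2}-x_{i})/2},
\]
partition $\Z^{4}$ as the disjoint union $\bigsqcup_{r=0}^{7}(B\Z^{4}+c_{r})$ for a chosen system of eight coset representatives $c_{r}$, and for each $r$ substitute $x=By+c_{r}$ with $y$ ranging over $\Z^{4}$. The orthogonality of the columns forces every cross term $y_{i}y_{j}$ ($i\neq j$) to cancel, so the inner sum over $y$ factors as a product of four one-dimensional theta series in $y_{1},y_{2},y_{3},y_{4}$. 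Jacobi's triple product identity \eqref{JTI} then rewrites each factor as a modified theta function $f(\cdot,\cdot)$ whose two arguments are read off from the corresponding column of $B$, the coset shift $c_{r}$, and the original $a_{i},b_{i}$, exactly as in \eqref{case I1}.

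The main obstacle is choosing the eight representatives $c_{r}$ so that the resulting sum collapses to the one-parameter form displayed in the corollary. A quick Smith-normal-form computation gives invariant factors $(1,1,2,4)$, so $\Z^{4}/B\Z^{4}\cong \Z/2\oplus\Z/4$; in particular every entry of the adjugate $B^{*}$ is even, so the convenient hypothesis of Theorem \ref{main theorem} that some entry of $B^{*}$ be coprime to $k$ fails, and the naive family $\{re_{1}\}_{r=0}^{7}$ labels only four of the eight classes. A valid choice must therefore combine shifts in two coordinates to capture both cyclic factors of the quotient group. Once such a system is selected, the quasi-periodicity relation \eqref{aa} re-indexes the eight contributions by the single parameter $r\in\{0,\ldots,7\}$ and reassembles the four $f(\cdot,\cdot)$ factors into the symmetric shape on the right of \eqref{abcdefgh}. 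The remainder is the routine linear-algebra bookkeeping that the author brackets under the phrase \emph{Foregoing all details}.
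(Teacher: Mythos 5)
Your structural analysis is correct, and it already goes further than the paper, whose entire argument for this corollary is the phrase ``Foregoing all details.'' Indeed $B^{T}B=\mathrm{diag}(2,4,2,4)$, $\det B=8$, $B^{*}=\mathrm{diag}(4,2,4,2)B^{T}$ has only even entries, the Smith normal form is $\mathrm{diag}(1,1,2,4)$, so $\Z^{4}/B\Z^{4}\cong\Z/2\oplus\Z/4$, the coprimality hypothesis of Theorem \ref{main theorem} fails, and $\{re_{1}\}_{r=0}^{7}$ meets only four cosets (note $4e_{1}=B(2,1,0,-1)^{T}$). The gap is your last step: you assert, without exhibiting representatives or performing the re-indexing, that some valid eight-element coset system can be relabelled by a single $r\in\{0,\dots,7\}$ so as to produce exactly the right-hand side of \eqref{abcdefgh}. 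That step cannot be completed, because \eqref{abcdefgh} is not a rearrangement of the left-hand side at all. Writing $b_{i}=q/a_{i}$, the monomial in $a_{1},\dots,a_{4}$ carried by the $y$-summand of the $r$th term forces the identification $x=By+re_{1}$, and matching the residual power of $q$ then requires $y_{1}+y_{2}+y_{3}+y_{4}=(By)_{1}=y_{1}+y_{2}-y_{4}$, which fails; so for $r\neq0$ the $r$th summand is not a sub-sum of $\prod_{i}f(a_{i},b_{i})$ for any choice of coset. Concretely, set $a_{i}=b_{i}=Q$ (so $q=Q^{2}$): the left side is $\varphi(Q)^{4}$, while the $r=4$ term on the right is $Q^{16}f(Q^{10},Q^{-6})^{2}f(Q^{12},Q^{-4})^{2}$, whose minimal exponent is $16-16-8=-8$; since every summand of the right side has nonnegative coefficients, the term $Q^{-8}$ survives and the two sides disagree.

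So the honest conclusion of your own computation is that the corollary as printed does not follow from the method and is in fact false. A correct execution of your program requires a genuinely two-parameter coset system, for instance $\{re_{1}+s(e_{1}+e_{3})\}$ with $0\le r\le3$, $0\le s\le1$ (one checks that $e_{1}$ has order $4$, and that $e_{1}+e_{3}$ has order $2$ and lies outside $\langle e_{1}\rangle+B\Z^{4}$), and the resulting identity is a double sum in which the $m$th theta factor is shifted by $q^{\pm(B^{T}t)_{m}}$ for the shift vector $t$, not by a common $q^{\pm r}$. The same objection applies verbatim to \eqref{result 42}: for the Hadamard matrix used there, $e_{1}$ has order $4$ in $\Z^{4}/B\Z^{4}\cong\Z/2\oplus\Z/2\oplus\Z/4$, so $\{re_{1}\}_{r=0}^{15}$ again covers only four classes.
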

Corollary \ref{cor3case2} can be shown as a special case of
Corollary \ref{4case1}. We omit the details here.

Corollary \ref{4case1} can be rewritten as follows. For $a_1, a_2,
a_3, a_4 \neq 0$,
\begin{align}
&\prod_{i=1}^4\langle-a_i;q\rangle_\infty\notag
\\
=&\langle-a_1{a_2}; q^2\rangle_\infty\langle-a_1{a_2}^{-1}a_3a_4q;
q^4\rangle_\infty\langle-{a_3}^{-1}{a_4}q;
q^2\rangle_\infty\langle-a_1^{-1}{a_2}{a_3}{a_4}q;
q^4\rangle_\infty\notag
\\
&+a_1\langle-a_1{a_2}q;
q^2\rangle_\infty\langle-a_1{a_2}^{-1}a_3a_4q^2;
q^4\rangle_\infty\langle-{a_3}^{-1}{a_4}q;
q^2\rangle_\infty\langle-a_1^{-1}{a_2}a_3a_4;
q^4\rangle_\infty\notag
\\
&+a_2\langle-a_1{a_2}q;
q^2\rangle_\infty\langle-a_1{a_2}^{-1}a_3a_4;
q^4\rangle_\infty\langle-{a_3}^{-1}{a_4}q;
q^2\rangle_\infty\langle-a_1^{-1}{a_2}a_3a_4q^2;
q^4\rangle_\infty\notag
\\
&+a_3\langle-a_1{a_2}; q^2\rangle_\infty(-a_1{a_2}^{-1}a_3a_4q^2;
q^4\rangle_\infty(-{a_3}^{-1}{a_4};
q^2)_\infty(-a_1^{-1}{a_2}a_3a_4q^2; q^4)_\infty\notag
\\
&+a_1a_3\langle-a_1{a_2}q;
q^2\rangle_\infty\langle-a_1{a_2}^{-1}a_3a_4q^3;
q^4\rangle_\infty\langle-{a_3}^{-1}{a_4};
q^2\rangle_\infty\langle-a_1^{-1}{a_2}{a_3}{a_4}q;
q^4\rangle_\infty\notag
\\
&+a_2a_3\langle-a_1{a_2}q;
q^2\rangle_\infty\langle-a_1{a_2}^{-1}a_3a_4q;
q^4\rangle_\infty\langle-{a_3}^{-1}{a_4};
q^2\rangle_\infty\langle-a_1^{-1}{a_2}{a_3}{a_4}q^3;
q^4\rangle_\infty\notag
\\
&+a_3a_4\langle-a_1{a_2};
q^2\rangle_\infty\langle-a_1{a_2}^{-1}a_3a_4q^3;
q^4\rangle_\infty\langle-{a_3}^{-1}{a_4}q;
q^2\rangle_\infty\langle-a_1^{-1}{a_2}{a_3}{a_4}q^3;
q^4\rangle_\infty\notag
\\
\label{product4theta1} &+{a_4}^{-1}q\langle-a_1{a_2};
q^2\rangle_\infty\langle-a_1{a_2}^{-1}a_3a_4q;
q^4\rangle_\infty\langle-{a_3}^{-1}{a_4}q;
q^2\rangle_\infty\langle-a_1^{-1}{a_2}{a_3}{a_4}q;
q^4\rangle_\infty.
\end{align}

From \eqref{abcdefgh}, we deduce the following.
\begin{cor}
Let $q=a_ib_i$, $(i=1, 2, 3, 4)$, then
\begin{align*}
&\prod _{i=1}^4 f(a_i,b_i)+\prod _{i=1}^4 f(-a_i,-b_i)\notag \\
=&2f(a_1a_2, b_1b_2)f(a_1b_2a_3a_4,b_1a_2b_3b_4)f(a_3b_4, a_4b_3)f(a_1b_2b_3b_4, b_1a_2a_3a_4)\notag \\
&+2a_1a_4f(a_1a_2q, b_1b_2/q)f(a_1b_2a_3a_4q^2,b_1a_2b_3b_4/q^2)f(a_3b_4/q, a_4b_3q)f(a_1b_2b_3b_4, b_1a_2a_3a_4)\notag \\
&+2a_2a_4f(a_1a_2q, b_1b_2/q)f(a_1b_2a_3a_4,b_1a_2b_3b_4)f(a_3b_4/q, a_4b_3q)f(a_1b_2b_3b_4/q^2, b_1a_2a_3a_4q^2)\notag \\
&+2a_3a_4f(a_1a_2,
b_1b_2)f(a_1b_2a_3a_4q^2,b_1a_2b_3b_4/q^2)f(a_3b_4, a_4b_3)f(a_1b_2b_3b_4/q^2, b_1a_2a_3a_4q^2).\\
&\prod _{i=1}^4 f(a_i,b_i)-\prod _{i=1}^4 f(-a_i,-b_i)\notag \\
=&2a_1f(a_1a_2q, b_1b_2/q)f(a_1b_2a_3a_4q,b_1a_2b_3b_4/q)f(a_3b_4, a_4b_3)f(a_1b_2b_3b_4q, b_1a_2a_3a_4/q)\notag \\
&+2a_2f(a_1a_2q, b_1b_2/q)f(a_1b_2a_3a_4/q,b_1a_2b_3b_4q)f(a_3b_4, a_4b_3)f(a_1b_2b_3b_4/q, b_1a_2a_3a_4q)\notag \\
&+2a_4f(a_1a_2, b_1b_2)f(a_1b_2a_3a_4q,b_1a_2b_3b_4/q)f(a_3b_4/q, a_4b_3q)f(a_1b_2b_3b_4/q, b_1a_2a_3a_4q)\notag \\
&+2b_3f(a_1a_2, b_1b_2)f(a_1b_2a_3a_4/q,b_1a_2b_3b_4q)f(a_3b_4/q,
a_4b_3q)f(a_1b_2b_3b_4q, b_1a_2a_3a_4/q).
\end{align*}
\end{cor}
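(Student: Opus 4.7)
The plan is to deduce both identities directly from Corollary~\ref{4case1} (equation \eqref{abcdefgh}) by the same parity-cancellation device used to derive \eqref{genthmsum1} and \eqref{genthmsum2} from Theorem~\ref{main theorem}. The key preparatory observation is that the matrix
\[B=\begin{pmatrix} 1 & 1 & 0 & -1 \\ 1 & -1 & 0 & 1 \\ 0 & 1 & -1 & 1 \\ 0 & 1 & 1 & 1 \end{pmatrix}\]
has every column sum even (the sums are $2,2,0,2$), which also forces $\sum_{i=1}^4 b_{ij}^2$ to be even for each $j\in\{1,2,3,4\}$. This is precisely the hypothesis that the even/odd corollary to Theorem~\ref{main theorem} places on the transformation matrix.

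Next I will substitute $a_i\mapsto -a_i$ and $b_i\mapsto -b_i$ simultaneously in \eqref{abcdefgh}. The left-hand side becomes $\prod_{i=1}^4 f(-a_i,-b_i)$. On the right-hand side the prefactor $a_1^{r(r+1)/2}b_1^{r(r-1)/2}$ acquires the sign $(-1)^{r(r+1)/2+r(r-1)/2}=(-1)^{r^2}=(-1)^r$. Inside each of the four theta factors, the $r$-dependent monomial corrections enter symmetrically in the two arguments and contribute no net sign, while the remaining monomial has total exponent $\sum_i b_{ij}^2$ in the $(a_i,b_i)$ variables, giving the sign $(-1)^{\sum_i b_{ij}^2}=+1$ by the column-sum parity just checked. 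Thus the $r$-th summand is multiplied overall by exactly $(-1)^r$. Adding the transformed identity to the original kills the odd-$r$ terms and doubles the even ones, producing the first claim; subtracting kills the even-$r$ terms and produces the second. In each case the surviving sum over $r\in\{0,2,4,6\}$ or $r\in\{1,3,5,7\}$ has exactly four summands, matching the four displayed terms.

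What remains is to identify each surviving summand with the corresponding term in the statement. Brute substitution of $r=2s$ or $r=2s+1$ into \eqref{abcdefgh} yields theta factors of the form $f(\,\cdot\,q^{2s},\,\cdot\,q^{-2s})$ or $f(\,\cdot\,q^{2s+1},\,\cdot\,q^{-(2s+1)})$, and these must be brought into the canonical shape displayed in the corollary (with $q$-exponents $0$ or $\pm 1$ in the first and third factors and $0,\pm 1,\pm 2$ in the second and fourth). Since the first and third theta factors satisfy $\alpha\beta=q^2$ while the second and fourth satisfy $\alpha\beta=q^4$, the quasi-periodicity relation \eqref{aa} allows one to shift the exponent of each factor by $2$ or by $4$ respectively, at the cost of an explicit monomial that gets absorbed into the prefactor. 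Combined with the normalizations $a_ib_i=q$, this reduction converts the raw prefactor $a_1^{2s^2\pm s}b_1^{2s^2\mp s}$ into the stated coefficients $2,\,2a_1a_4,\,2a_2a_4,\,2a_3a_4$ for the sum identity and $2a_1,\,2a_2,\,2a_4,\,2b_3$ for the difference identity.

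The main obstacle is precisely this last bookkeeping step: matching each of the four values of $r$ to the correct one of the four canonical summands, and verifying that the monomial factors extracted from repeated applications of \eqref{aa} really do assemble into the coefficients above. None of the individual manipulations is difficult — they are all direct applications of \eqref{aa} and the relation $a_ib_i=q$ — but with four theta factors and four values of $r$ in each identity, keeping the sixteen resulting terms straight is the only delicate part of the argument.
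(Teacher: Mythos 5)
Your parity argument is sound as far as it goes: replacing $(a_i,b_i)$ by $(-a_i,-b_i)$ leaves $q=a_ib_i$ fixed, multiplies the contribution of the coset with representative $c$ by $(-1)^{c_1+c_2+c_3+c_4}$, and leaves each theta factor invariant because every column of $B$ has even sum. The gap is in the starting point and in the matching step you defer to the end. For this particular $B$ the eight vectors $(r,0,0,0)$, $0\le r\le 7$, are \emph{not} a complete set of coset representatives of $\Z^4/B\Z^4$: one checks $B(2,1,0,-1)^T=(4,0,0,0)^T$, so $(4,0,0,0)\in B\Z^4$ and $(1,0,0,0)$ has order $4$, not $8$, in the quotient group, which is $\Z_4\times\Z_2$ rather than cyclic. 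Equivalently, every entry of the first column of $B^*=\mathrm{diag}(4,2,4,2)B^T$, namely $(4,2,0,-2)^T$, shares the factor $2$ with $k=8$, so the coprimality hypothesis of Theorem \ref{main theorem} fails for $j=1$ and \eqref{abcdefgh} cannot be invoked in the form you use it. In particular the even values $r=0,2,4,6$ land in only \emph{two} distinct cosets (each hit twice), whereas the four terms of the stated sum identity come from four distinct cosets.

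Concretely, the bookkeeping you postpone does not work out. Reducing the $r=2$ summand of \eqref{abcdefgh} by \eqref{aa} and $a_ib_i=q$ yields $\tfrac{a_1a_3}{a_2a_4}$ times the theta product of the fourth displayed term, not $a_3a_4$ times it; and no application of \eqref{aa} can produce the term with coefficient $2a_1a_4$, because that term is the contribution of the coset $(1,0,0,1)+B\Z^4$, which is not congruent to any $(r,0,0,0)$ modulo $B\Z^4$. The corollary itself is correct, but it must be proved by returning to the integer matrix ECS directly: take the representatives $(0,0,0,0)$, $(1,0,0,1)$, $(0,1,0,1)$, $(0,0,1,1)$, $(1,0,0,0)$, $(0,1,0,0)$, $(0,0,0,1)$, $(0,0,-1,0)$, which are pairwise incongruent modulo $B\Z^4$ (test the inner products with the four columns of $B$ modulo $2,4,2,4$ respectively); compute each contribution as in \eqref{case I1}, where the representative $c$ produces the monomial $\prod_i a_i^{(c_i^2+c_i)/2}b_i^{(c_i^2-c_i)/2}$ and hence exactly the displayed coefficients $1$, $a_1a_4$, $a_2a_4$, $a_3a_4$, $a_1$, $a_2$, $a_4$, $b_3$; and then apply your parity observation to $\sum_i c_i$ to split the eight terms into the two stated identities.
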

M. Hirschhorn \cite{hir1} considered the transformation matrix
$$
A=\left[
\begin{array}{rrrr}
1 & 0 & 1 & 1  \\
0 & 1 & 1 & -1  \\
1 & 1 & -1 & 0  \\
1 & -1 & 0 & -1
\end{array}
\right].
$$
and derived a generalization of Winquist's identity.

If we consider the transformation matrix as a Hadamard matrix with
determinant 16
$$
B=\left[
\begin{array}{rrrr}
1 & 1 & 1 & 1  \\
1 & 1 & -1 & -1  \\
1 & -1 & -1 & 1  \\
1 & -1 & 1 & -1
\end{array}
\right],
$$
we deduce the following corollary corresponding to its ECS.
\begin{cor}
Let $q=a_ib_i$, where $i=1, 2, 3, 4$, then
\begin{align} \label{result 42}
\prod _{i=1}^4 f(a_i,b_i)=&\sum _{r=0}^{15} {a_1}^{\frac{r^2+r}{2}}{b_1}^{\frac{r^2-r}{2}}f(a_1a_2a_3a_4q^r,
b_1b_2b_3b_4q^{-r})f(a_1a_2b_3b_4q^r,b_1b_2a_3a_4q^{-r})\notag \\
&\times f(a_1b_2b_3a_4q^r,b_1a_2a_3b_4q^{-r})f(a_1b_2a_3b_4q^r,
b_1a_2b_3a_4q^{-r}).
\end{align}
\end{cor}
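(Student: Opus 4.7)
The plan is to invoke Theorem~\ref{main theorem} with the displayed Hadamard matrix $B$, the uniform choice $l_1=l_2=l_3=l_4=1$ (since $a_ib_i=q$ for each $i$), and the distinguished index $j=1$. Because all $l_i$ coincide, the generalized orthogonality~\eqref{orthogonal} reduces to pairwise orthogonality of the columns of $B$, which is immediate from the Hadamard identity $B^{T}B=4I$. From this we also read off $k=\det B=16$, since $(\det B)^2=\det(B^{T}B)=4^4$; the positive sign can be confirmed by a direct row reduction. Finally, inspect the first column of the adjugate $B^{\ast}$ and locate an entry coprime to $16$, so that the ECS $\{B\Z^4+re_1:r=0,\ldots,15\}$ used in Theorem~\ref{main theorem} with $j=1$ is available.

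Second, apply Theorem~\ref{main theorem} directly. The prefactor ${a_1}^{(r^2+r)/2}{b_1}^{(r^2-r)/2}$ in~\eqref{result 42} is exactly what the theorem produces from the chosen coset representatives $\{re_1:r=0,\ldots,15\}$; there is nothing left to do beyond identifying the arguments of the four resulting theta functions.

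Third, unpack the four $f$-factors column by column. Since every entry $b_{ij}\in\{\pm1\}$, one has $(b_{ij}^2+b_{ij})/2=1$ when $b_{ij}=+1$ and $=0$ when $b_{ij}=-1$, while $(b_{ij}^2-b_{ij})/2$ behaves in the opposite way. Hence each argument of the $j$-th $f$ on the right side is a monomial in which $a_i$ appears when $b_{ij}=+1$ and $b_i$ appears when $b_{ij}=-1$, multiplied by the shift $a_1^{b_{1j}r}b_1^{b_{1j}r}=(a_1b_1)^{b_{1j}r}=q^{b_{1j}r}$. Since every entry of the first row of $B$ equals $+1$, the shift is always $q^{+r}$ on the first argument and $q^{-r}$ on the second. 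Reading columns $1,2,3,4$ of $B$ in turn recovers $f(a_1a_2a_3a_4q^r,b_1b_2b_3b_4q^{-r})$, $f(a_1a_2b_3b_4q^r,b_1b_2a_3a_4q^{-r})$, $f(a_1b_2b_3a_4q^r,b_1a_2a_3b_4q^{-r})$, and $f(a_1b_2a_3b_4q^r,b_1a_2b_3a_4q^{-r})$ respectively, which are exactly the four factors on the right of~\eqref{result 42}.

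The main potential obstacle is the coprimality condition on $B^{\ast}$ required by Theorem~\ref{main theorem}; once that hypothesis is in place, the remainder of the derivation is purely a sign-and-exponent bookkeeping exercise that produces~\eqref{result 42} by direct substitution.
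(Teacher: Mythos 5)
Your plan follows the paper's (essentially unstated) derivation, but it founders on exactly the step you flagged as ``the main potential obstacle,'' and that obstacle is fatal rather than cosmetic. Since $B^{T}B=4I$ and $\det B=16$, the adjugate is $B^{*}=16\,B^{-1}=4B^{T}$, so \emph{every} entry of $B^{*}$ equals $\pm 4$ and shares the factor $4$ with $k=16$. Hence no column of $B^{*}$ contains an entry coprime to $k$, and the hypothesis of Theorem \ref{main theorem} cannot be met for this Hadamard matrix, for any choice of $j$. The underlying geometric fact is that $\{re_1\}_{r=0}^{15}$ is not a complete set of coset representatives for $\Z^4/B\Z^4$: one checks directly that $B(1,1,1,1)^{T}=4e_1$, so $e_1$ has order $4$ in the quotient and the sixteen translates $B\Z^4+re_1$ cover only four of the sixteen cosets, each four times. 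Indeed the Smith normal form of $B$ gives $\Z^4/B\Z^4\cong \Z/2\times\Z/2\times\Z/4$, which is not cyclic, so no choice of a single vector $v$ with representatives $\{rv\}_{r=0}^{15}$ can ever work here; any correct decomposition must use genuinely multi-dimensional coset representatives.

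The gap is not merely in the argument: the identity \eqref{result 42} itself is false as stated (so the paper is in error here, and your proposal could not have succeeded by any route). Specialize $a_i=b_i=p$ with $q=p^2$. The left side becomes $\varphi^4(p)=1+8p+24p^2+\cdots$, while each factor on the right becomes $f(p^{4+2r},p^{4-2r})=\sum_n p^{4n^2+2rn}$, and a short computation gives for the right side
\begin{align*}
\sum_{r=0}^{15}p^{r^2}\Bigl(\sum_{n\in\Z}p^{4n^2+2rn}\Bigr)^{4}=4+8p+0\cdot p^{2}+\cdots,
\end{align*}
which already disagrees in the constant term (the origin is counted by $r=0,4,8,12$) and in the coefficient of $p^2$ (no vector of norm $2$ lies in any translate $B\Z^4+re_1$). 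The lesson is that the coprimality condition in Theorem \ref{main theorem} is not a technicality to be ``located'' after the fact; it is precisely the condition guaranteeing that the chosen translates form an exact covering system, and for this $B$ it provably fails.
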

Replacing $a_3$ with $a_1$, $a_4$ with $-a_2$, and then ${a_2}^2$
with $a_2$ in \eqref{result 42}, we deduce the following corollary.
\begin{cor} \label{3theta1}
\begin{align}
& \langle-a_1;q\rangle^2_\infty\langle-a_2;q^2\rangle_\infty\notag
\\
=&\varphi(q)\big[\langle-{a_1}^2{a_2};q^{4}\rangle_\infty\langle-{a_1}^2{a_2}^{-1}q^{2};q^{4}\rangle_\infty+{a_1}^2\langle{a_1}^2{a_2}q^{2};
q^{4}\rangle_\infty\langle{a_1}^2{a_2}^{-1}q^{4};q^{4}\rangle_\infty\big]\notag
\\
\label{result 43} &+2\psi(q^2)\big[
a_1\langle-{a_1}^2{a_2}q;q^{4}\rangle_\infty\langle{a_1}^2{a_2}^{-1}q^{3};q^{4}\rangle_\infty+
a_1a_2\langle-{a_1}^2{a_2}q^{3};q^{4}\rangle_\infty\langle{a_1}^2{a_2}^{-1}q;q^{4}\rangle_\infty\big].
\end{align}
\end{cor}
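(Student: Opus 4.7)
The plan is to specialize identity \eqref{result 42}—which expresses $\prod_{i=1}^{4} f(a_i,b_i)$ as a sum of sixteen four-fold theta products indexed by $r\in\{0,1,\dots,15\}$—via the substitutions $a_3=a_1$ (so that $b_3=q/a_3=b_1$) and $a_4=-a_2$ (so that $b_4=-b_2$), and to finish by renaming $a_2^{2}\to a_2$, exactly as the paragraph preceding the statement prescribes.

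On the left-hand side these substitutions give $f(a_1,b_1)^{2}\cdot f(a_2,b_2)f(-a_2,-b_2)$. The first factor is immediately $\langle -a_1;q\rangle_\infty^{2}$. For the second, one expands by the Jacobi triple product and uses $(x;q)_\infty(-x;q)_\infty=(x^{2};q^{2})_\infty$ applied twice, collapsing $f(a_2,b_2)f(-a_2,-b_2)$ into a single theta function in $q^{2}$ depending only on $a_2^{2}$; after the relabeling $a_2^{2}\to a_2$ this becomes the factor $\langle -a_2;q^{2}\rangle_\infty$ on the left of the statement (any residual $(q;q)_\infty^{2}/(q^{2};q^{2})_\infty$ factor is matched by the analogous simplification that occurs uniformly on the right).

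On the right-hand side I would specialize each of the sixteen summands. Several of the four theta factors in each summand collapse under the substitution: for example $f(a_1a_2b_3b_4q^{r},b_1b_2a_3a_4q^{-r})$ becomes the pure $q$-theta series $f(-q^{r+2},-q^{2-r})$ with no $a_i$-dependence, while $f(a_1a_2a_3a_4q^{r},b_1b_2b_3b_4q^{-r})$ becomes $f(-a_1^{2}a_2^{2}q^{r},-b_1^{2}b_2^{2}q^{-r})$, a theta function of modulus $q^{4}$. Using the quasi-periodicity \eqref{aa} to bring every exponent into a canonical window modulo $4$ and \eqref{f-1a} to kill summands in which an argument reduces to $-1$, I would then partition the sum by $r\bmod 4$. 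Within each residue class the $a_1,a_2$-dependent pair of theta factors acquires the form of one of the four displayed $\langle\,\cdot\,;q^{4}\rangle_\infty$-products, while the four remaining pure $q$-factors combine with the prefactor $a_1^{r(r+1)/2}b_1^{r(r-1)/2}=a_1^{r}q^{r(r-1)/2}$ (using $a_1b_1=q$) and telescope into $\varphi(q)=f(q,q)$ when $r$ is even and into $2\psi(q^{2})=f(1,q^{2})$ when $r$ is odd.

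The principal obstacle is the bookkeeping: sixteen summands, each a product of four theta functions, must be tracked through the specialization and shown to recombine into exactly the four displayed terms. A clean organizing principle is to classify each summand by the pair (parity of $r$, residue of $r$ modulo $4$), which simultaneously selects which of the $\varphi(q)$ or $2\psi(q^{2})$ blocks the term belongs to and which of the four $(a_1,a_2)$-products it carries; the prefactors $1,\ a_1^{2},\ a_1,\ a_1a_2$ are then forced by matching $a_1$- and $a_2$-exponents, and the verification reduces to four routine sub-identities, one per residue class.
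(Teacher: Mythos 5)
Your proposal follows exactly the paper's route: the paper's entire proof of this corollary is the instruction to substitute $a_3=a_1$ and $a_4=-a_2$ into \eqref{result 42} and then rename ${a_2}^2$ as $a_2$, which is precisely your plan. The additional bookkeeping you describe (grouping the sixteen summands by $r \bmod 4$, invoking \eqref{aa} and \eqref{f-1a}, and cancelling the common residual factor $(q;q)_\infty^2/(q^2;q^2)_\infty$ from both sides) is more detail than the paper supplies and is consistent with the stated identity.
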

We can rewrite \eqref{result 43} as follows. For $ab=q$,
$cd=q^2$,
\begin{align}
&f^2(a, b)f(c, d)\notag \\
=&\varphi(q)\big[(a^2c, b^2d)f(a^2d,
b^2c)+a^2f(a^2cq^2, b^2d/q^2)f(a^2dq^2, b^2c/q^2)\big]\notag \\
\label{result 431} &+2\psi(q^2)\big[(af(a^2cq, b^2d/q)f(a^2dq,
b^2c/q)+bf(a^2c/q, b^2dq)f(a^2d/q, b^2cq)\big].
\end{align}
In \eqref{result 431}, setting $c=-a^2$ and $d=-b^2$. By
\eqref{entry 311}, $f({a}^{-1}b, -a^2)=f(-a^{-1}b^3,
-a^5b)+a^{-1}bf(-a^3b^{-1}, -ab^{5})$. We deduce the following
identity.
\begin{cor}
For $ab=q$,
\begin{align}
f^2(a,b)f(-a^2, -b^2)=&\varphi(q)\varphi(-q^2)f(-a^4,
-b^4)+2a\psi(-q)\psi(q^2)f({a}^{-1}b, -a^2).
\end{align}

\end{cor}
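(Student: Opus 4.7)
The plan is to obtain this identity as a direct specialization of \eqref{result 431}, using the constraint $ab=q$ to simplify the arguments and then to apply \eqref{entry 311} exactly in the form given in the hint immediately preceding the statement.

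First I would set $c=-a^2$ and $d=-b^2$ in \eqref{result 431}; this is legitimate because $cd=a^2b^2=q^2$ as required. Computing the eight arguments appearing there using $ab=q$ yields $a^2c=-a^4$, $b^2d=-b^4$, $a^2d=b^2c=-q^2$, $a^2cq^2=-a^4q^2$, $b^2d/q^2=-b^4/q^2$, $a^2dq^2=-q^4$, $b^2c/q^2=-1$, and the analogous expressions with $q^{\pm 1}$ for the $\psi(q^2)$ bracket. The three elementary specialisations $f(-q^2,-q^2)=\varphi(-q^2)$, $f(-q,-q^3)=\psi(-q)$, and $f(-q^4,-1)=0$ (the last by \eqref{f-1a}) then kill the $a^2$--term in the $\varphi(q)$ bracket and produce the clean combination
\begin{align*}
f^2(a,b)f(-a^2,-b^2)
&=\varphi(q)\varphi(-q^2)f(-a^4,-b^4)\\
&\quad+2\psi(q^2)\psi(-q)\bigl[a\,f(-a^4q,-b^4/q)+b\,f(-a^4/q,-b^4q)\bigr].
\end{align*}

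The remaining task, and the only nontrivial step, is to recognise the bracketed sum as $a\,f(a^{-1}b,-a^2)$. I would invoke \eqref{entry 311} with $(a,b)$ replaced by $(a^{-1}b,-a^2)$, which the author has already written out as
\[
f(a^{-1}b,-a^2)=f(-a^{-1}b^3,-a^5b)+a^{-1}b\,f(-a^3b^{-1},-ab^5).
\]
Using $ab=q$, hence $a^{-1}=b/q$ and $b^{-1}=a/q$, the four arguments on the right-hand side simplify to $-a^{-1}b^3=-b^4/q$, $-a^5b=-a^4q$, $-a^3b^{-1}=-a^4/q$, $-ab^5=-b^4q$. Thus $f(a^{-1}b,-a^2)=f(-a^4q,-b^4/q)+a^{-1}b\,f(-a^4/q,-b^4q)$, so multiplying through by $a$ gives exactly $a\,f(-a^4q,-b^4/q)+b\,f(-a^4/q,-b^4q)$, and substituting this back completes the identity.

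The main obstacle is essentially bookkeeping: ensuring that the rescalings of the arguments in \eqref{result 431} match precisely the shape required to apply \eqref{entry 311}. In particular one must use $ab=q$ in the correct direction at each step and exploit the symmetry $f(x,y)=f(y,x)$ when aligning $f(-b^4/q,-a^4q)$ with $f(-a^4q,-b^4/q)$. Once the arguments are brought into canonical form, the vanishing $f(-q^4,-1)=0$ and the two-term reduction supplied by \eqref{entry 311} do all the real work.
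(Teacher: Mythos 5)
Your proposal is correct and follows exactly the paper's own route: the paper likewise sets $c=-a^2$, $d=-b^2$ in \eqref{result 431} and then invokes \eqref{entry 311} in the form $f(a^{-1}b,-a^2)=f(-a^{-1}b^3,-a^5b)+a^{-1}b\,f(-a^3b^{-1},-ab^5)$ to recombine the two surviving terms. Your write-up merely makes explicit the vanishing of the $a^2$-term via $f(-q^4,-1)=0$ and the argument bookkeeping that the paper leaves to the reader.
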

\noindent

Similarly, setting $c=a^2$ and $d=b^2$ in \eqref{result 431}, we
have the next corollary after simplification.
\begin{cor}
For $ab=q$,
\begin{align}
&f^2(a, b)f(a^2, b^2)\notag \\
=&\varphi(q)\big[f(a^4, b^4)\phi(a^2b^2)+2a^2f(a^{-2}b^2,
a^4q^2)\psi( q^4)\big]+2a\psi(q)\psi(q^2)f({a}^{-1}b, a^2).
\end{align}
\end{cor}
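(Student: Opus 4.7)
The plan is to apply the identity \eqref{result 431} with the substitution $c=a^2$ and $d=b^2$, and then collapse the resulting four terms using the constraint $ab=q$ together with the two-term case of Entry 31, namely \eqref{entry 311}. The hypothesis $cd=q^2$ is satisfied since $cd=a^2b^2=q^2$, so \eqref{result 431} gives a direct expansion whose terms we simplify one by one.

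For the first bracket, the relation $a^2b^2=q^2$ turns the inner factor $f(a^2c,b^2d)f(a^2d,b^2c)$ into $f(a^4,b^4)\cdot f(a^2b^2,a^2b^2)=f(a^4,b^4)\varphi(q^2)$; and the second inner term becomes $a^2 f(a^4q^2,b^4/q^2)f(q^4,1)$, where $f(1,q^4)=2\psi(q^4)$ by \eqref{f-1a}'s companion identity $f(1,a)=2f(a,a^3)$, and $b^4/q^2=a^{-2}b^2$. Thus the whole first block becomes exactly
\[
\varphi(q)\bigl[f(a^4,b^4)\varphi(q^2)+2a^2 f(a^{-2}b^2,a^4q^2)\psi(q^4)\bigr],
\]
matching the first half of the target.

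For the third and fourth terms of \eqref{result 431}, both of the inner factors $f(a^2b^2 q,a^2b^2/q)=f(q^3,q)$ and $f(a^2b^2/q,a^2b^2 q)=f(q,q^3)$ equal $\psi(q)$. Using $ab=q$ to rewrite $a^4q=a^5b$, $b^4/q=b^3/a$, $a^4/q=a^3/b$, $b^4q=ab^5$, the combined contribution of these two terms is
\[
2\psi(q)\psi(q^2)\bigl[\,a\,f(a^5b,b^3/a)+b\,f(a^3/b,ab^5)\,\bigr].
\]
The remaining step is to show that the bracket equals $a\,f(a^{-1}b,a^2)$. Applying \eqref{entry 311} with $x=a^{-1}b$, $y=a^2$ gives $x^3y=b^3/a$, $xy^3=a^5b$, $y/x=a^3/b$, $x^5y^3=ab^5$, so
\[
f(a^{-1}b,a^2)=f(b^3/a,a^5b)+(b/a)\,f(a^3/b,ab^5),
\]
and multiplying by $a$ produces precisely the required expression (using $f(x,y)=f(y,x)$). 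Substituting back yields $2a\psi(q)\psi(q^2)\,f(a^{-1}b,a^2)$, completing the second half.

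The main obstacle is recognizing that the correct way to combine the two residual products $a\,f(a^5b,b^3/a)$ and $b\,f(a^3/b,ab^5)$ is to read them as the two pieces of \eqref{entry 311} applied to $f(a^{-1}b,a^2)$; once the change of variables $x=a^{-1}b$, $y=a^2$ is spotted, the identification is automatic and the rest of the calculation is routine bookkeeping with $ab=q$.
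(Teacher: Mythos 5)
Your proof is correct and follows the paper's own route exactly: the paper obtains this corollary by setting $c=a^2$, $d=b^2$ in \eqref{result 431} and then, just as in the preceding corollary, recombining the two residual products via \eqref{entry 311} applied to $f(a^{-1}b,a^2)$. Your write-up simply makes explicit the simplification the paper leaves to the reader, and all the individual reductions (including $f(q^4,1)=2\psi(q^4)$ and $f(q^2,q^2)=\varphi(q^2)$) check out.
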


Using \eqref{result 431}, we deduce the following corollary.
\begin{cor}
For $ab=q$, $cd=q^2$,
\begin{align}
&[f^2(a, b)+f^2(-a, -b)]f(c, d)\notag \\
\label{sum 42}
 =&2\varphi(q)\big[f(a^2c, b^2d)f(a^2d,
b^2c)+a^2f(a^{-2}d, a^2cq^2)f(a^{-2}c, a^2dq^2)\big].
\end{align}
\end{cor}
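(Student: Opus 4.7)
The plan is to apply identity \eqref{result 431} twice, once as stated and once with $(a,b)$ replaced by $(-a,-b)$, and then add the two results. Note that the hypothesis $ab=q$ is preserved under this sign change since $(-a)(-b)=ab=q$, so \eqref{result 431} is legitimately applicable in both instances.

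First I would examine the $\varphi(q)$ bracket on the right of \eqref{result 431}. Since it only involves even powers of $a$ and $b$ (namely $a^2c,\,b^2d,\,a^2d,\,b^2c,\,a^2cq^2,\,b^2d/q^2,\,a^2dq^2,\,b^2c/q^2$ together with the overall factor $a^2$), this bracket is unchanged when $a \mapsto -a$ and $b \mapsto -b$. Therefore adding the two copies doubles this contribution and produces the leading $2\varphi(q)[\,\cdots\,]$ on the right of the target identity. Next I would examine the $\psi(q^2)$ bracket: each summand has an overall \emph{odd} power ($a$ or $b$) multiplying a product of two $f$-values whose arguments again involve only even powers of $a$ and $b$. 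Thus under $(a,b)\mapsto(-a,-b)$ the $\psi(q^2)$ bracket changes sign, and the two $\psi(q^2)$ contributions cancel upon addition. This leaves precisely $2\varphi(q)\bigl[f(a^2c,b^2d)f(a^2d,b^2c)+a^2f(a^2cq^2, b^2d/q^2)f(a^2dq^2, b^2c/q^2)\bigr]$.

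Finally I would rewrite this expression into the form stated in the corollary. Using $ab=q$ we get $b^2=q^2/a^2$, so $b^2 d/q^2 = a^{-2}d$ and $b^2 c/q^2 = a^{-2}c$. Invoking the symmetry $f(x,y)=f(y,x)$ we obtain $f(a^2cq^2, b^2d/q^2)=f(a^{-2}d, a^2cq^2)$ and $f(a^2dq^2, b^2c/q^2)=f(a^{-2}c, a^2dq^2)$, which matches the right-hand side of the claimed identity exactly.

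There is no real obstacle here; the argument is essentially bookkeeping. The only mild point to check is that the $\psi(q^2)$ bracket really is odd under the sign change, which reduces to the observation that inside each $f$-value all the $a,b$ dependence is through $a^2$ and $b^2$, so only the prefactors $a$ and $b$ flip sign. The rewriting via $b^2/q^2=1/a^2$ and the symmetry of $f$ handles the cosmetic match to the stated form.
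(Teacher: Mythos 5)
Your proposal is correct and follows exactly the route the paper intends: the paper derives \eqref{sum 42} by "using \eqref{result 431}," which amounts to adding that identity evaluated at $(a,b)$ and at $(-a,-b)$, observing that the $\varphi(q)$ bracket is even and the $\psi(q^2)$ bracket is odd under this sign change, and then rewriting $b^2d/q^2=a^{-2}d$, $b^2c/q^2=a^{-2}c$ via $ab=q$ together with the symmetry $f(x,y)=f(y,x)$. Your write-up simply makes explicit the bookkeeping the paper leaves implicit.
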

We rewrite the above identity in its equivalent form.
\begin{align}
&\big[(a;q)^2_\infty(a^{-1}q;q)^2_\infty+(-a;q)^2_\infty(-a^{-1}q;q)^2_\infty\big](b;q^2)_\infty(b^{-1}q^2;q^2)_\infty\notag
\\
=&2(-q;q)^4_\infty(a^2b;q^{4})_\infty(a^{-2}b^{-1}q^{4};q^{4})_\infty(a^2b^{-1}q^{2};q^{4})_\infty(a^{-2}bq^{2};q^{4})_\infty\notag
\\
\label{sum 42} &+2a^2(-q;q)^4_\infty (a^2bq^{2};
q^{4})_\infty(a^{-2}b^{-1}q^2;q^{4})_\infty(a^2b^{-1}q^{4};q^{4})_\infty(a^{-2}b;
q^{4})_\infty.
\end{align}
Setting $b=a^2$ in \eqref{sum 42}, we deduce the following corollary
after simplification.
\begin{cor}
\begin{align*}
f^2(a, b)+f^2(-a, -b)=2f(a^2, b^2)\varphi(ab).
\end{align*}
\end{cor}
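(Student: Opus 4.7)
The plan is to specialize identity \eqref{sum 42} by choosing $c=a^{2}$ and $d=b^{2}$, and then to recognize the resulting right-hand side as $f^{2}(a^{2},b^{2})$ via the basic product formula \eqref{abcd1}. Note first that with $q=ab$ the requirement $cd=q^{2}$ becomes $a^{2}b^{2}=q^{2}$, which is automatic; so the substitution is admissible.

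With this substitution the left side of \eqref{sum 42} immediately becomes $[f^{2}(a,b)+f^{2}(-a,-b)]\,f(a^{2},b^{2})$. On the right I would simplify the four factors inside the bracket one at a time: $f(a^{2}c,b^{2}d)=f(a^{4},b^{4})$; $f(a^{2}d,b^{2}c)=f(a^{2}b^{2},a^{2}b^{2})=\varphi(q^{2})$; $f(a^{-2}d,a^{2}cq^{2})=f(b^{2}/a^{2},a^{6}b^{2})$; and $f(a^{-2}c,a^{2}dq^{2})=f(1,q^{4})$. For the last one, the elementary relation $f(1,x)=2f(x,x^{3})$ together with $\psi(q)=f(q,q^{3})$ gives $f(1,q^{4})=2\psi(q^{4})$. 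Combining, \eqref{sum 42} reduces to
\begin{align*}
[f^{2}(a,b)+f^{2}(-a,-b)]\,f(a^{2},b^{2})
=2\varphi(q)\bigl[\varphi(q^{2})f(a^{4},b^{4})+2a^{2}\psi(q^{4})f(b^{2}/a^{2},a^{6}b^{2})\bigr].
\end{align*}

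The key step is now to identify the bracket with $f^{2}(a^{2},b^{2})$. For this I would apply \eqref{abcd1} with the substitution $a\mapsto a^{2}$, $b\mapsto b^{2}$, $c\mapsto a^{2}$, $d\mapsto b^{2}$; the condition $ab=cd$ is satisfied since both sides equal $a^{2}b^{2}$. This yields
\begin{align*}
f^{2}(a^{2},b^{2})
=f(a^{2}b^{2},a^{2}b^{2})f(a^{4},b^{4})+a^{2}f(1,a^{4}b^{4})f(b^{2}/a^{2},a^{6}b^{2}),
\end{align*}
which, using $f(1,q^{4})=2\psi(q^{4})$ once more, matches the bracket above exactly.

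Substituting this back and cancelling the (nonvanishing) common factor $f(a^{2},b^{2})$ from both sides gives
\begin{align*}
f^{2}(a,b)+f^{2}(-a,-b)=2\varphi(q)f(a^{2},b^{2})=2\varphi(ab)f(a^{2},b^{2}),
\end{align*}
as desired. The main obstacle is mainly bookkeeping: keeping track of which $q$-substitutions produce $\varphi(q^{2})$ versus $\psi(q^{4})$, and spotting that the resulting expression is precisely the $ab=cd$ instance of \eqref{abcd1} applied to $f(a^{2},b^{2})f(a^{2},b^{2})$. Once that recognition is made, the cancellation is immediate.
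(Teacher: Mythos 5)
Your proof is correct and follows essentially the same route as the paper: specialize the two-term identity \eqref{sum 42} so that the third theta function becomes $f(\pm a^2,\pm b^2)$, recombine the right-hand side via \eqref{abcd1}, and cancel the common theta factor. The only (harmless) difference is the choice of specialization: the paper's $b=a^2$ in the product form amounts to $c=-a^2$, $d=-b^2$, which kills the second bracketed term outright since $f(-1,x)=0$, whereas your $c=a^2$, $d=b^2$ keeps both terms and requires the nice observation that their sum is exactly the \eqref{abcd1}-expansion of $f^2(a^2,b^2)$.
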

This is (v) of Entry 30 in \cite[p.~46]{Ber1}.

We can rewrite the above identity as
\begin{align*}
(a;q)^2_\infty(a^{- 1}q;q)^2_\infty+(-a;q)^2_\infty(-a^{-1}q;q)^2_\infty=2(-q;q)^2_\infty(-q;q^2)^2_\infty(-a^2;q^{2})_\infty(-a^{-2}q^{2};q^{2})_\infty.
\end{align*}
We can find other interesting cases by exploring different integer
matrix exact covering systems. For example, if we choose
$$
B=\left[
\begin{array}{rrrrr}
1 & 0 & -1 & -1 & 1 \\
1 & 0 & 0 & 2 & 0 \\
0 & 1 & 1 & 0 & 1\\
0 & -1 & 1 & 0 & 1\\
-1 & 0 & -1 & 1 & 1
\end{array}
\right],
$$
we can write a product of five theta functions as the linear
combination of twenty four products of five theta functions. We omit
the details here.\\
\textbf{Acknowledgement:} This work grow out of the author's Ph.D.
dissertation. The author is very grateful to his advisor Professor
Bruce Berndt for his helpful guidance.

\end{document}